\documentclass[11pt,leqno]{article}

\usepackage[T1]{fontenc}
\usepackage{lmodern}
\usepackage[a4paper,textwidth=405bp,textheight=646bp,centering]{geometry}
\usepackage{amsmath,amssymb,amsthm,mathtools,mathrsfs,bm}
\usepackage{enumitem}
\usepackage{aliascnt}
\usepackage{indentfirst}
\usepackage{microtype}
\usepackage{xcolor}
\usepackage[numbers,sort&compress]{natbib}
\usepackage[colorlinks=true,linkcolor=blue,citecolor=blue,urlcolor=blue]{hyperref}
\usepackage[nameinlink,noabbrev]{cleveref}

\allowdisplaybreaks
\numberwithin{equation}{section}

\newtheorem{theorem}{Theorem}[section]

\newaliascnt{proposition}{theorem}
\newtheorem{proposition}[proposition]{Proposition}
\aliascntresetthe{proposition}

\newaliascnt{lemma}{theorem}
\newtheorem{lemma}[lemma]{Lemma}
\aliascntresetthe{lemma}

\newaliascnt{corollary}{theorem}

\aliascntresetthe{corollary}

\newaliascnt{definition}{theorem}
\newtheorem{definition}[definition]{Definition}
\aliascntresetthe{definition}

\newaliascnt{assumption}{theorem}
\newtheorem{assumption}[assumption]{Assumption}
\aliascntresetthe{assumption}

\newaliascnt{problem}{theorem}

\aliascntresetthe{problem}

\theoremstyle{remark}
\newaliascnt{remark}{theorem}
\newtheorem{remark}[remark]{Remark}
\aliascntresetthe{remark}

\crefname{theorem}{theorem}{theorems}
\crefname{proposition}{proposition}{propositions}
\crefname{lemma}{lemma}{lemmas}
\crefname{corollary}{corollary}{corollaries}
\crefname{definition}{definition}{definitions}
\crefname{assumption}{assumption}{assumptions}
\crefname{problem}{problem}{problems}
\crefname{remark}{remark}{remarks}

\newcommand{\R}{\mathbb R}
\newcommand{\N}{\mathbb N}
\newcommand{\Pp}{\mathbb P}
\newcommand{\Q}{\mathbb Q}
\newcommand{\E}{\mathbb E}
\newcommand{\F}{\mathcal F}
\newcommand{\FY}{\mathcal F^Y}
\newcommand{\cE}{\mathcal E}
\newcommand{\cM}{\mathcal M}

\newcommand{\dd}{\,\mathrm d}
\newcommand{\one}{\mathbf 1}
\newcommand{\ip}[2]{\left\langle #1,#2\right\rangle}
\newcommand{\norm}[1]{\left\lVert #1\right\rVert}
\newcommand{\abs}[1]{\left\lvert #1\right\rvert}
\newcommand{\br}[1]{\left\langle #1\right\rangle}
\newcommand{\esssup}{\operatorname*{ess\,sup}}
\newcommand{\supp}{\operatorname{supp}}

\newcommand{\sgn}{\operatorname{sgn}}

\title{Uniqueness and stability of nonlinear filtering equations with unbounded random coefficients
\thanks{ This work is supported by
		the National Key R\&D Program of China (2022YFA1006102),
		the National Natural Science Foundation of China (12471418, 12595294, 12231002),
		and the New Cornerstone Science Foundation (NCI202501).}}
\author{Jie Xiong
\thanks{Department of Mathematics and Shenzhen International Center for Mathematics,
		Southern University of Science and Technology, Shenzhen 518055, China.
		(\texttt{xiongj@sustech.edu.cn}).}
		 \and 
		 Wen Xu
\thanks{School of Mathematical Sciences, Peking University,
		Beijing 100871, China.
		(\texttt{xuwen@math.pku.edu.cn}).}
		 \and 
		 Ying Yang
\thanks{Department of Mathematics,
		Southern University of Science and Technology, Shenzhen 518055, China.
		(\texttt{12331007@mail.sustech.edu.cn}).}}
\date{}
\hypersetup{
  pdftitle={Uniqueness of nonlinear filtering equations with unbounded random coefficients},
  pdfauthor={Jie Xiong, Wen Xu, Ying Yang},
  pdfsubject={Nonlinear filtering, dual BSPDEs, and measure-valued uniqueness},
  pdfkeywords={nonlinear filtering, Zakai equation, BSPDE, unbounded observation drift}
}

\begin{document}
\maketitle

\begin{abstract}

We study a multidimensional nonlinear filtering model whose coefficients depend on a given observation-adapted predictable process and whose observation drift may grow linearly in both the state and the random input. Due to the unboundedness of the observation drift, a global
reference measure is not available. To overcome this hurdle, a localized entropy argument is adapted to prove the stopped likelihood to be a uniformly integrable martingale at each control-energy stopping level. The stopped Zakai equation, and hence, the 
stopped filtering equation is derived. The global  filtering equation is then established by de-localization. The uniqueness of the solution to
the stopped Zakai equation is obtained by a duality backward stochastic partial differential equation. This uniqueness then propagates to that of the global filtering equation through the stopped ones. Finally, a stability result is established in $W_1$-distance of measures.

\end{abstract}

\noindent\textbf{Keywords.}
nonlinear filtering; Zakai equation; Kushner--Stratonovich equation;   unbounded observation drift;  measure-valued uniqueness.

\medskip
\noindent\textbf{MSC 2020.} Primary 60G35, 60H15; secondary 60H10, 60H30.

%\medskip
%\noindent\textbf{Funding.}
%This work was supported by the National Key R\&D Program of China
%(2022YFA1006102) and the National Natural Science Foundation of China
%(12471418).

%\tableofcontents

\section{Introduction}
\label{sec:introduction}

Nonlinear filtering concerns the conditional law of an unobserved signal given a noisy observation.  For diffusion models, the normalized conditional distribution is governed by the Kushner--Stratonovich equation, while its unnormalized counterpart satisfies the linear Zakai equation.  The latter is often the more convenient object for analysis because of the linearity; see, among many standard references, \citet{Kallianpur1980,BainCrisan2009,Xiong2008}.  A basic question is whether these measure-valued equations determine the filter uniquely when their coefficients are random and unbounded.

We study this question for the multidimensional system
\begin{equation*}
 \begin{aligned}
  \dd X_t&=b(X_t,u_t)\dd t+\sigma(X_t,u_t)\dd W_t,\\
  \dd Y_t&=h(X_t,u_t)\dd t+\dd V_t,
 \end{aligned}
 \label{eq:intro-model}
\end{equation*}
where $X\in\R^d$, $W\in\R^r$, $Y,V\in\R^m$, the signal and observation noises are independent, and $u$ is an $\mathbb F^Y$-predictable process satisfying only the pathwise energy condition
\[
 \int_0^T |u_s|^2\dd s<\infty.
\]
The process $u$ need not be of the Markovian form $U(t,Y_t)$; it may depend predictably on the entire observation history.  Consequently, after conditioning on the observations, the coefficients in the filtering equations may form a non-Markovian, observation-adapted random environment.

Throughout the paper the admissible system $(X,Y,u)$ is taken as given.  
  At the same time, we allow
\[
 |b(x,v)|+\|\sigma(x,v)\|+|h(x,v)|\le C(1+|x|+|v|),
\]
so that the observation drift is unbounded and the second-order coefficient $a=\sigma\sigma^\top$ may grow quadratically in space.  For the full coefficient class, the likelihood stochastic exponential need not be covered by a bounded-coefficient Novikov argument, and the backward equation required by measure-valued duality may have spatially unbounded random coefficients, including the coefficient multiplying its martingale integrand.

There is a substantial literature on uniqueness of filtering equations.
Bensoussan \citep{Bensoussan1992} established measure-valued uniqueness for classical Zakai equations through duality with deterministic backward partial differential equations (PDEs), allowing degenerate signal diffusion and linear growth of the signal drift and observation function under bounded first- and second-order spatial derivatives.  Baras et al.  \cite{BarasBlankenshipHopkins1983} established path-by-path existence, uniqueness, and tail estimates for scalar density solutions of a class of Zakai equations with strongly unbounded deterministic Markov coefficients. Their argument transforms the forward Zakai equation into a classical robust parabolic PDE for each fixed observation path and applies coefficient-adapted exponential weights and maximum-principle techniques. Their principal results assume a nondegenerate signal diffusion and obtain uniqueness in a prescribed weighted decay class, a special bilinear degeneracy is treated after a logarithmic change of variables.
	
	The present problem is different in both the source of randomness and the solution concept. The coefficients \( b(\cdot, u_t), \sigma(\cdot, u_t) \), and \( h(\cdot, u_t) \) are generated by an arbitrary observation-predictable input and need not be functions of the current observation alone. Consequently, the backward dual object cannot be treated as a pathwise classical PDE: it is an adapted backward stochastic
partial differential equation (BSPDE) with the additional unknown \( g \) and the spatially unbounded coupling \( h(\cdot, u_t)^\top g_t \). Our uniqueness result is measure-valued, allows singular initial laws and degenerate signal diffusion, and is proved in an admissible class specified by a class-D mass condition and one finite polynomial moment.

Backward-equation duality for linearly growing coefficients, without an ellipticity requirement but with bounded first and second spatial derivatives, already appears in Bensoussan's treatment of partially observed systems \citep{Bensoussan1992}.  Unbounded-coefficient versions of the Zakai equation were studied by \citet{BarasBlankenshipHopkins1983} and \citet{Florchinger1993}.  The pathwise-uniqueness analysis of \citet{LucicHeunis2001} treats observation-conditioned signals by a different route, while the filtered-martingale-problem method of \citet{KurtzOcone1988} supplies an alternative uniqueness mechanism that does not proceed through the present whole-space BSPDE.

In the Markov setting, \citet{BKK} proved uniqueness and robustness for measure-valued Zakai and Fujisaki--Kallianpur--Kunita equations and allowed a continuous, possibly unbounded observation function.  Their signal is characterized by a deterministic Markov generator, so the backward dual object does not face the observation-adapted random environment considered here.  More recently, \citet{CrisanPardoux2026} established measure-valued uniqueness in a considerably more general signal--observation geometry, allowing observation-dependent coefficients, correlated Brownian noises, and degenerate observation diffusion.  Their uniqueness theory is based on a BSPDE duality argument and, under their Assumption U, imposes boundedness on the relevant zero-order coefficients together with high-order spatial regularity; their derivation of the filtering equations also isolates the martingale property of the likelihood as a separate assumption.

The present result is complementary rather than more general in noise geometry.  Its specific obstruction is the combination of a non-Markovian observation-predictable input, the resulting random dual coefficients, linear spatial growth of the zero-order coefficient multiplying the BSPDE martingale integrand, and the absence of density or ellipticity assumptions.  The analytical ingredient developed here is  a product formula valid for the stated admissible measure class.  The entropy estimate is used as a localized Bene\v{s}-type ingredient in that construction rather than as a claim of an unstopped likelihood theorem.

The first contribution is probabilistic and explicitly local in the control-energy level.  With $\tau_k$ being defined by \eqref{eq:tau-k},
we prove that, for each fixed $k$, the stopped likelihood is a uniformly integrable martingale by an entropy localization argument.  Under \Cref{ass:filtering}, the martingale step uses the linear-growth bounds, the pathwise control-energy bound, and the resulting stopped signal moment estimate; it does not require bounded $h$, a Novikov exponential moment, or BMO control.  This yields a levelwise reference probability $\Q^k$ and the stopped Zakai and Kushner--Stratonovich equations.  The family $(\Q^k)$ is consistent on overlapping stopped $\sigma$-fields, but no single global reference probability is asserted under the present assumptions.

The second contribution is the measure-valued duality step.  We prove a localized stochastic product formula for an admissible Zakai solution $\mu$ and the random field $(f,g)$.  Mollification and spatial cutoff produce only one nontrivial large-space commutator,
\[
 L_t(\chi_Rf)-\chi_RL_tf,
\]
whose highest growth order is $\langle x\rangle^{\lambda+1}$.  
Accordingly, the duality requires one local polynomial moment of order $q_0\ge\lambda+1$, together with a class-$D$ condition on the mass process.  A completely explicit admissible choice used below is $\lambda=d+3$ and $q_0=d+4$.  The resulting uniqueness statements are within the admissible classes of \Cref{def:admissible-zakai,def:admissible-KS}; the class-$D$, local-moment, and local observation-drift conditions are part of the conclusions' scope.
  The resulting identity
\[
 \langle\mu_{t\wedge\theta},f_{t\wedge\theta}\rangle
 =\langle\mu_0,f_0\rangle
 +\int_0^{t\wedge\theta}
   \langle\mu_s,g_s+f_sh(\cdot,u_s)\rangle^\top\dd Y_s
\]
permits random terminal tests.  Choosing
\[
 \gamma(\omega,x)=\operatorname{sgn}\langle\mu^1_\theta-\mu^2_\theta,\varphi\rangle\,\varphi(x)
\]
then yields pathwise uniqueness of the stopped Zakai equation within \Cref{def:admissible-zakai}'s admissible class.  Under the finite-$(d+4)$-moment hypothesis on the initial law, the actual stopped filters belong to this class and are consistent across levels.  An explicit unnormalization calculation transfers the stopped uniqueness to the Kushner--Stratonovich equation, and the control-energy stopping times increase to $T$, giving global uniqueness only for the physical Kushner--Stratonovich equation and for the consistent stopped Zakai family.

Finite-horizon robustness of nonlinear filters under model perturbations is classical; see \citet{BhattKallianpurKarandikar1999}, who prove continuity in the law of a possibly non-Markov signal for independent observation noise.  We record a common-reference criterion tailored to perturbations of the present observation-adapted input.  The comparison is made on a common reference space because the physical probability and the observation model vary with the input.  If the state processes converge in probability in $C([0,T];\R^d)$, the induced observation drifts $h(X_t^n,u_n(t))$ converge in the natural $L^2([0,T])$ sense in probability, and the terminal likelihoods are uniformly integrable, then, for every fixed time, the normalized filters converge in probability in bounded--Lipschitz distance.  Under a uniform second-moment bound for the states under their corresponding physical probabilities, the convergence holds in $W_1$ and therefore against every continuous test function of at most linear growth.  The theorem is stated in terms of the induced observation drifts because the present assumptions impose only Borel dependence on the input variable, so convergence of the inputs alone does not in general control the likelihoods.

The paper does not treat correlated signal--observation noises or nonidentity observation covariance.  In those models the Zakai noise operator contains spatial derivatives and the dual BSPDE acquires corresponding $Dg$ couplings, so the extension is analytical rather than notational.  The present independent-noise model isolates the difficulty relevant to observation-adapted random inputs and unbounded observation drift.  This form is also suited to later applications in partially observed stochastic control and related problems, where the auxiliary input is itself observation-adapted.

The rest of the paper is organized as follows.  \Cref{sec:filtering} constructs the localized reference probability and derives the stopped filtering equations under the minimal filtering assumptions.   \Cref{sec:measure-duality} establishes the measure-valued stochastic product formula, proves uniqueness of the stopped Zakai equations. The uniqueness of the stopped  Kushner--Stratonovich equations  is presented
in \Cref{KS-equation}. We also verify the consistency of the actual stopped filters, and finally patches the physical Kushner--Stratonovich uniqueness along the control-energy stopping times. \Cref{sec:stability} proves stability of the normalized filters in the weak and $W_1$ topologies under the common-reference likelihood conditions described above. The article is concluded in \Cref{sec:conclusion}. An appendix on BSPDE is presented at the end for the convenience of the reader.

%\Cref{sec:duality-problem} identifies the BSPDE forced by the Zakai duality.   
\section{Filtering model, localization, and the stopped equations}
\label{sec:filtering}

\subsection{Probabilistic setting and minimal filtering assumptions}

Let $T>0$ and let $(\Omega,\F,\mathbb F,\Pp)$ satisfy the usual conditions.
The filtration $\mathbb F=(\F_t)_{0\le t\le T}$ is the usual augmentation of the filtration generated by an $\R^r$-valued Brownian motion $W$, an $\R^m$-valued Brownian motion $V$, and an initial random variable $X_0$.  $W$ and $V$ are independent, and $X_0$ is independent of $(W,V)$.  
%Thus $\mathbb F$ has the predictable representation property with respect to $(W,V)$ after adjoining the initial sigma-field $\F_0=\sigma(X_0)\vee\mathcal N$. 
 We write $\nu_0$ for the law of the square-integrable random variable  $X_0$,  and use the convention $\inf\varnothing=+\infty$, and the notation  $
 \br{x}:=(1+|x|^2)^{1/2}.
 $ 

Let $U$ be a Euclidean space.  We work with a given \emph{admissible filtering system} $(X,Y,u)$ on this probability space: $Y_0=0$,
\begin{eqnarray*}
    \dd X_t
      &=&b(X_t,u_t)\dd t+\sigma(X_t,u_t)\dd W_t,
      \qquad X_t\in\R^d,
      \label{eq:state-physical}\\
    \dd Y_t
      &=&h(X_t,u_t)\dd t+\dd V_t,
      \qquad Y_t\in\R^m,
      \label{eq:obs-physical}
\end{eqnarray*}
$u$ is $\mathbb F^Y$-predictable for the usual augmentation of the filtration generated by $Y$, and
\begin{equation}
    \int_0^T \abs{u_s}^2\dd s<\infty,
    \qquad \Pp\text{-a.s.}
    \label{eq:u-energy}
\end{equation}
 The formulation does not require $u_t$ to be a Markovian function of $Y_t$, it may depend predictably on the entire observation history. 
\begin{assumption}
\label{ass:filtering}
The functions
\[
 b:\R^d\times U\to\R^d,
 \qquad
 \sigma:\R^d\times U\to\R^{d\times r},
 \qquad
 h:\R^d\times U\to\R^m
\]
are Borel measurable.  There is a constant $K\ge1$ such that, for all $x,y\in\R^d$ and $v\in U$,
\begin{align}
 &\abs{b(x,v)-b(y,v)}+\norm{\sigma(x,v)-\sigma(y,v)}
   \le K\abs{x-y},
   \label{eq:filtering-lipschitz}\\
 &\abs{b(x,v)}+\norm{\sigma(x,v)}+\abs{h(x,v)}
   \le K(1+\abs{x}+\abs{v}).
   \label{eq:filtering-growth}
\end{align}
\end{assumption}

For $k\in\N$, define
\begin{equation}
    \tau_k
      :=\inf\left\{t\in[0,T]:\int_0^t\abs{u_s}^2\dd s\ge k\right\}\wedge T.
    \label{eq:tau-k}
\end{equation}
Then 
$\tau_k$ is an $\mathbb F^Y$-stopping time, and  $\tau_k\uparrow T$ almost surely by \eqref{eq:u-energy}.

\begin{lemma}[Stopped signal moments]
\label{lem:signal-moments}
Suppose \Cref{ass:filtering} holds.  Let $q\ge2$, and let $\widetilde{\Pp}$ be a probability measure equivalent to $\Pp$ such that $W$ remains an $\R^r$-valued Brownian motion under $\widetilde{\Pp}$.  If
\[
    \E^{\widetilde{\Pp}}|X_0|^q<\infty,
\]
then
\begin{equation}
  \E^{\widetilde{\Pp}}\sup_{0\le s\le\tau_k}\br{X_s}^{q}
  \le C_{k,q,T,K}\left(1+\E^{\widetilde{\Pp}}\abs{X_0}^{q}\right).
  \label{eq:signal-moment}
\end{equation}
\end{lemma}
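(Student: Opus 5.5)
We prove \eqref{eq:signal-moment} by a standard Burkholder--Davis--Gundy and Gronwall argument under $\widetilde{\Pp}$, with the control-energy bound $\int_0^{\tau_k}|u_s|^2\dd s\le k$ replacing any moment assumption on $u$. The key point is that under $\widetilde{\Pp}$ the state equation keeps the same form: $W$ is still a Brownian motion, and the drift $b(X_t,u_t)$ is unchanged because the change of measure leaves the pathwise SDE intact. Since $\widetilde{\Pp}\sim\Pp$, the energy bound and the definition of $\tau_k$ hold $\widetilde{\Pp}$-a.s. as well. Because $\br{x}^q\le 2^{q/2}(1+|x|^q)$, it suffices to bound $\E^{\widetilde{\Pp}}\sup_{s\le\tau_k}|X_s|^q$.

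Since $\E^{\widetilde{\Pp}}\sup_{s\le t\wedge\tau_k}|X_s|^q$ is not known a priori to be finite, we first localize further. Let $\rho_n:=\inf\{t:|X_t|\ge n\}\wedge\tau_k$ and $\phi_n(t):=\E^{\widetilde{\Pp}}\sup_{s\le t\wedge\rho_n}|X_s|^q$, which is finite. We write
\[
X_{s}=X_0+\int_0^{s}b(X_r,u_r)\dd r+\int_0^{s}\sigma(X_r,u_r)\dd W_r
\]
and take the $q$-th power of the supremum over $s\le t\wedge\rho_n$, so that three terms appear.

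For the drift, the growth bound \eqref{eq:filtering-growth} and Hölder's inequality give
\[
\Big(\int_0^{t\wedge\rho_n}|b|\dd r\Big)^q\le C\Big(T^q+\big(\textstyle\int_0^{t\wedge\rho_n}|X_r|\dd r\big)^q+(Tk)^{q/2}\Big),
\]
using $\int_0^{\tau_k}|u_r|\dd r\le (Tk)^{1/2}$.

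For the stochastic integral, the BDG inequality gives $\E\big(\int_0^{t\wedge\rho_n}\|\sigma\|^2\dd r\big)^{q/2}$, with $\|\sigma\|^2\le 3K^2(1+|X_r|^2+|u_r|^2)$. The $|u_r|^2$ contribution is handled by the deterministic bound $\int_0^{\tau_k}|u_r|^2\dd r\le k$, which is the main point where the stopping time $\tau_k$ is essential: no integrability of $u$ under $\widetilde{\Pp}$ is available otherwise. The $|X_r|^2$ contribution is bounded via Hölder by $T^{q/2-1}\int_0^t\E\sup_{s\le r\wedge\rho_n}|X_s|^q\dd r$.

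Collecting terms yields
\[
\phi_n(t)\le C_{q,T,K}\big(1+k^{q/2}+\E^{\widetilde{\Pp}}|X_0|^q\big)+C_{q,T,K}\int_0^t\phi_n(r)\dd r .
\]
Gronwall's lemma then gives $\phi_n(T)\le C_{k,q,T,K}(1+\E^{\widetilde{\Pp}}|X_0|^q)$ uniformly in $n$. Since $|X|$ has continuous paths, $\rho_n\uparrow\tau_k$, and Fatou's lemma (monotone convergence) yields \eqref{eq:signal-moment}.

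The only delicate step is ensuring that nothing in the estimate requires moments of $u$ beyond the pathwise energy bound up to $\tau_k$. Both the drift and the diffusion terms involve $u$ only through $\int_0^{\tau_k}|u|\dd r$ or $\int_0^{\tau_k}|u|^2\dd r$, which are bounded by constants depending only on $k$ and $T$. Note that the Lipschitz condition \eqref{eq:filtering-lipschitz} is not needed for the estimate; it only guarantees that $X$ is well defined.
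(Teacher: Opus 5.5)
Your proposal is correct and follows essentially the same route as the paper: localize at the exit time of $|X|$ from a ball (together with $\tau_k$), apply BDG and H\"older with the pathwise bounds $\int_0^{\tau_k}|u_s|\dd s\le (Tk)^{1/2}$ and $\int_0^{\tau_k}|u_s|^2\dd s\le k$, close with Gronwall uniformly in $n$, and pass to the limit by Fatou. Your added remarks, that $\phi_n$ is finite a priori and that the Lipschitz condition enters only through well-posedness of $X$, are accurate refinements of the same argument.
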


\begin{proof}

Set $\eta_n:=\inf\{t\in[0,T]:|X_t|\ge n\}\wedge T$ and first stop the
state equation at $\tau_k\wedge\eta_n$.  In the estimates displayed below,
each occurrence of $\tau_k$ is temporarily understood as
$\tau_k\wedge\eta_n$.  The linear-growth bound, H\"older's inequality in
time, and the Burkholder--Davis--Gundy inequality then apply without assuming
in advance the finiteness of the quantity being estimated, and their
constants are independent of $n$.

\begin{align*}
 \E^{\widetilde{\Pp}}\sup_{r\le t\wedge\tau_k}|X_r|^q
 \le C_q\Bigg(&\E^{\widetilde{\Pp}}|X_0|^q
 +\E^{\widetilde{\Pp}}\Big(\int_0^{t\wedge\tau_k}
        (1+|X_s|+|u_s|)\dd s\Big)^q\\
 &+\E^{\widetilde{\Pp}}\Big(\int_0^{t\wedge\tau_k}
        (1+|X_s|+|u_s|)^2\dd s\Big)^{q/2}\Bigg).
\end{align*}
The control terms are bounded pathwise by
\[
 \int_0^{\tau_k}|u_s|\dd s\le T^{1/2}k^{1/2},
 \qquad
 \left(\int_0^{\tau_k}|u_s|^2\dd s\right)^{q/2}\le k^{q/2}.
\]
For the state terms, H\"older's inequality yields
\[
 \left(\int_0^{t\wedge\tau_k}|X_s|\dd s\right)^q
 +\left(\int_0^{t\wedge\tau_k}|X_s|^2\dd s\right)^{q/2}
 \le C_{q,T}\int_0^t\sup_{a\le s\wedge\tau_k}|X_a|^q\dd s.
\]
Consequently,
\[
 \E^{\widetilde{\Pp}}\sup_{r\le t\wedge\tau_k}|X_r|^q
 \le C_{k,q,T,K}\left(1+\E^{\widetilde{\Pp}}|X_0|^q
 +\int_0^t\E^{\widetilde{\Pp}}\sup_{a\le s\wedge\tau_k}|X_a|^q\dd s\right).
\]
Gronwall's lemma gives the bound uniformly in $n$.  Letting
$n\uparrow\infty$ and applying Fatou's lemma proves
\eqref{eq:signal-moment}.  Notice that only the $L^2$-energy of $u$ is used; no $L^q$-in-time assumption on $u$ is required.  See \citet[Chapter~2]{KaratzasShreve1991} for the standard SDE moment argument.
\end{proof}

\subsection{The localized likelihood is a true martingale}

Define 
$
 H_t:=h(X_t,u_t)
$ 
and the stopped likelihood
\begin{equation*}
 \Lambda_t^k
 :=\cE\left(-\int_0^{\,\cdot\wedge\tau_k}H_s^\top\dd V_s\right)_t
 =\exp\left(
   -\int_0^{t\wedge\tau_k}H_s^\top\dd V_s
   -\frac12\int_0^{t\wedge\tau_k}|H_s|^2\dd s
 \right).
 \label{eq:Lambda-k}
\end{equation*}
The next proposition replaces a Novikov assumption by an entropy estimate.  This is a localized Bene\v{s}-type mechanism; see \citet{KlebanerLiptser2014} for the linear-growth exponential-martingale principle and \citet{CassClarkCrisan2014} for its role in reference-probability filtering.

\begin{proposition}[Entropy criterion for the filtering likelihood]
\label{prop:filtering-likelihood}
Under \Cref{ass:filtering}, $\Lambda^k$ is a uniformly integrable $\Pp$-martingale.  In particular,
\begin{equation*}
  \E^{\Pp}\Lambda_t^k=1,
  \qquad 0\le t\le T.
  \label{eq:Lambda-expectation}
\end{equation*}
\end{proposition}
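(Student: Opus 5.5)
Since $\Lambda^k$ is a nonnegative local martingale, it is a supermartingale with $\E^{\Pp}\Lambda^k_t\le 1$. It therefore suffices to prove $\E^{\Pp}\Lambda^k_T=1$: a supermartingale with constant expectation on $[0,T]$ is a martingale, and it is closed by $\Lambda^k_T$, hence uniformly integrable. The plan is the standard entropy (Beneš-type) localization argument.

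\textbf{Localization.} Set
\[
\rho_n:=\inf\Bigl\{t:\int_0^{t\wedge\tau_k}|H_s|^2\dd s\ge n\Bigr\}\wedge T .
\]
The stopped process $\Lambda^{k}_{\cdot\wedge\rho_n}$ satisfies Novikov's condition, so $\E^{\Pp}\Lambda^k_{T\wedge\rho_n}=1$. Define probability measures
\[
\dd\Pp^n:=\Lambda^k_{T\wedge\rho_n}\dd\Pp .
\]
By Girsanov's theorem, $W$ is still a $\Pp^n$-Brownian motion, because $W$ and $V$ are independent and the density only involves $V$. The process
\[
\widetilde V^n_t:=V_t+\int_0^{t\wedge\tau_k\wedge\rho_n}H_s\dd s
\]
is a $\Pp^n$-Brownian motion. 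The law of $X_0$ is unchanged, since the density has conditional expectation $1$ given $\F_0$. Hence \Cref{lem:signal-moments} applies under each $\Pp^n$ with $q=2$: $X_0$ is square-integrable, the state equation involves only $W$, and the pathwise energy bound on $u$ up to $\tau_k$ is unaffected by the change of measure. This gives
\[
\E^{\Pp^n}\sup_{s\le\tau_k}\br{X_s}^2\le C_k\bigl(1+\E|X_0|^2\bigr),
\]
uniformly in $n$.

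\textbf{Entropy bound.} Under $\Pp^n$,
\[
\log\Lambda^k_{T\wedge\rho_n}=-\int_0^{T\wedge\tau_k\wedge\rho_n}H_s^\top\dd\widetilde V^n_s+\tfrac12\int_0^{T\wedge\tau_k\wedge\rho_n}|H_s|^2\dd s .
\]
The stochastic integral has $\Pp^n$-expectation zero because its quadratic variation is at most $n$. Therefore
\[
\E^{\Pp}\bigl[\Lambda^k_{T\wedge\rho_n}\log\Lambda^k_{T\wedge\rho_n}\bigr]=\tfrac12\,\E^{\Pp^n}\int_0^{\tau_k\wedge\rho_n}|H_s|^2\dd s\le C K^2\,\E^{\Pp^n}\int_0^{\tau_k}\bigl(1+|X_s|^2+|u_s|^2\bigr)\dd s .
\]
The right-hand side is at most $C_k(T+k)$, uniformly in $n$, by the uniform moment bound and the energy cap $\int_0^{\tau_k}|u_s|^2\dd s\le k$. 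By de la Vallée-Poussin's criterion (with $x\log x$), the family $\{\Lambda^k_{T\wedge\rho_n}\}_n$ is uniformly integrable under $\Pp$.

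\textbf{Passage to the limit.} Since $\int_0^{\tau_k}|H_s|^2\dd s<\infty$ almost surely (continuity of $X$ and the energy bound on $u$), we have $\rho_n=T$ for all large $n$ almost surely. Hence $\Lambda^k_{T\wedge\rho_n}\to\Lambda^k_T$ almost surely. Uniform integrability then gives
\[
\E^{\Pp}\Lambda^k_T=\lim_n\E^{\Pp}\Lambda^k_{T\wedge\rho_n}=1,
\]
which by the opening remark proves the claim.

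\textbf{Main obstacle.} The delicate step is verifying that \Cref{lem:signal-moments} applies under $\Pp^n$. We must check that $\Pp^n\sim\Pp$; this holds because $\Lambda^k_{T\wedge\rho_n}>0$. We must also check that $W$ stays a Brownian motion in the full filtration $\mathbb F$ under $\Pp^n$, and that $\E^{\Pp^n}|X_0|^2=\E^{\Pp}|X_0|^2$. Here independence of $V$ from $(W,X_0)$ is essential: the cross-variation $\langle W,\log\Lambda^k\rangle$ vanishes, so Girsanov produces no drift in $W$. The linear growth of $h$ in $u$ is absorbed only through the energy stopping at $\tau_k$, which is exactly why the statement is levelwise in $k$.
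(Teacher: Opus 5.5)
Your proof is correct and follows essentially the same route as the paper: Novikov on the energy-localized exponentials $\Lambda^k_{\cdot\wedge\rho_n}$, then Girsanov to keep $W$ Brownian and the law of $X_0$ unchanged. After that you use \Cref{lem:signal-moments} under each tilted measure to bound the entropy $\tfrac12\E^{\Pp^n}\int_0^{\tau_k\wedge\rho_n}|H_s|^2\dd s$ uniformly in $n$, apply de la Vall\'ee-Poussin, and close the nonnegative local martingale once $\E^{\Pp}\Lambda^k_T=1$; the differences from the paper (stopping $\rho_n$ at $T$ rather than $\tau_k$, and using $z\log z$ instead of $z\log z-z+1$) are cosmetic.
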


\begin{proof}
For $j\in\N$, set
\begin{equation*}
 \rho_j
 :=\inf\left\{t\in[0,T]:
       \int_0^{t\wedge\tau_k}|H_s|^2\dd s\ge j\right\}\wedge\tau_k
 \label{eq:rho-j}
\end{equation*}
and
\[
 \Lambda_t^{k,j}
 :=\cE\left(-\int_0^{\,\cdot\wedge\rho_j}H_s^\top\dd V_s\right)_t.
\]
Since the quadratic variation of its stochastic logarithm is bounded by $j$, Novikov's criterion implies that $\Lambda^{k,j}$ is a uniformly integrable martingale.  Define $\Pp^j$ by
\begin{equation*}
 \frac{\dd\Pp^j}{\dd\Pp}\bigg|_{\F_T}=\Lambda_T^{k,j}=\Lambda_{\rho_j}^{k,j}.
 \label{eq:Pj}
\end{equation*}
The multidimensional Girsanov theorem gives that
\[
 \widehat V_t^j:=V_t+\int_0^{t\wedge\rho_j}H_s\dd s
\]
is an $\R^m$-valued Brownian motion under $\Pp^j$.  The density is driven only by $V$ and $[W,V]=0$, hence $W$ remains an $\R^r$-valued Brownian motion under $\Pp^j$. It is also easy to show that the initial distribution is preserved. The state equation is therefore unchanged under $\Pp^j$.

Applying \Cref{lem:signal-moments} under $\Pp^j$ yields
\begin{equation}
 \sup_{j\ge1}\E^{\Pp^j}
       \sup_{0\le s\le\tau_k}|X_s|^2
 \le C_k\bigl(1+\E^{\Pp}|X_0|^2\bigr).
 \label{eq:Pj-state-bound}
\end{equation}
By \eqref{eq:filtering-growth}, \eqref{eq:tau-k}, and \eqref{eq:Pj-state-bound},
\begin{equation*}
 \sup_{j\ge1}\E^{\Pp^j}
       \int_0^{\rho_j}|H_s|^2\dd s
 \le C_k\bigl(1+\E^{\Pp}|X_0|^2\bigr).
 \label{eq:Pj-H-bound}
\end{equation*}
Under $\Pp^j$,
\[
 \log\Lambda_{\rho_j}^{k,j}
 =-\int_0^{\rho_j}H_s^\top\dd\widehat V_s^j
   +\frac12\int_0^{\rho_j}|H_s|^2\dd s.
\]
The stopped stochastic integral is square-integrable.  Hence
\begin{equation*}
 \E^{\Pp}\!
 \left[\Lambda_{\rho_j}^{k,j}\log\Lambda_{\rho_j}^{k,j}\right]
 =\frac12\E^{\Pp^j}\int_0^{\rho_j}|H_s|^2\dd s
 \le C_k\bigl(1+\E^{\Pp}|X_0|^2\bigr).
 \label{eq:filtering-entropy}
\end{equation*}
Together with $\E^{\Pp}\Lambda_{\rho_j}^{k,j}=1$, this bounds
$\E\Phi(\Lambda_{\rho_j}^{k,j})$ uniformly in $j$ for
$\Phi(z)=z\log z-z+1$.  Since $\Phi(z)/z\to\infty$, the de la Vall\'ee--Poussin criterion implies uniform integrability of
$\{\Lambda_{\rho_j}^{k,j}:j\ge1\}$.

The continuity of $X$ and \eqref{eq:tau-k} imply
$\int_0^{\tau_k}|H_s|^2\dd s<\infty$ a.s..  Thus $\rho_j\uparrow\tau_k$ and
$\Lambda_{\rho_j}^{k,j}\to\Lambda_T^k$ a.s..  Uniform integrability gives convergence in $L^1(\Pp)$ and
$\E^{\Pp}\Lambda_T^k=1$.  A nonnegative local martingale whose terminal expectation equals its initial value is closed by its terminal value.  Consequently
$\Lambda_t^k=\E^{\Pp}[\Lambda_T^k\mid\F_t]$ and $\Lambda^k$ is uniformly integrable.
\end{proof}

\subsection{Reference probability and the filtering equations}

The construction in this subsection is levelwise in $k$.  Proposition~\ref{prop:filtering-likelihood} proves the martingale property only for the exponential stopped at $\tau_k$.  Thus $\Q^k$ below is a local reference probability for identities stopped no later than $\tau_k$.

Define a probability measure $\Q^k$ on $\F_T$ by
\begin{equation*}
 \frac{\dd\Q^k}{\dd\Pp}\bigg|_{\F_T}=\Lambda_T^k.
 \label{eq:Qk}
\end{equation*}
Set
\begin{equation*}
 Y_t^k:=V_t+\int_0^{t\wedge\tau_k}H_s\dd s.
 \label{eq:Ybar-k}
\end{equation*}

Let $\mathcal N$ be the common collection of null sets under
$\Pp$ and $\Q^k$; the two collections agree because $\Q^k\sim\Pp$.
By Girsanov's theorem, $(W,Y^k)$ is an $(r+m)$-dimensional Brownian motion
under $\Q^k$.  Define
\begin{equation*}
 \begin{aligned}
\widetilde{\mathcal G}_t^k
 &:=\sigma(Y_s^k:0\le s\le t)\vee\mathcal N,\\
 \mathcal G_t^k
 &:=\sigma(Y^k_{s\wedge\tau_k}:0\le s\le t)\vee\mathcal N,
 \qquad 0\le t\le T.
 \end{aligned}
 \label{eq:completed-stopped-observation}
\end{equation*}
Thus $\widetilde{\mathbb G}^k=(\widetilde{\mathcal G}_t^k)_{t\le T}$ is the usual natural
filtration of the $\Q^k$-Brownian motion $Y^k$.  

\begin{lemma}[Observation-functional stopping and stopped PRP]
\label{lem:reference-stopped-prp}
\label{lem:reference-stopping-functional}
For each $k$, the stopping time $\tau_k$ has an $\widetilde{\mathbb G}^k$-stopping-time
version.  With that version,
\begin{equation}
 \mathcal G_t^k
 =\widetilde{\mathcal G}^k_{t\wedge\tau_k}
 =\FY_{t\wedge\tau_k},
 \qquad 0\le t\le T,
 \label{eq:stopped-observation-identity}
\end{equation}
where the $\sigma$-fields at stopping times are completed by $\mathcal N$.
Moreover, $\mathbb G^k$ has the predictable representation property (PRP) with
respect to $Y^k_{\cdot\wedge\tau_k}$: every $\mathbb G^k$-local martingale
$N$ can be written
\begin{equation}
 N_t=N_0+\int_0^{t\wedge\tau_k}\zeta_s^\top\dd Y_s^k
 \label{eq:stopped-reference-prp}
\end{equation}
for a $\mathbb G^k$-predictable integrand $\zeta$ which is locally square
integrable on $[0,\tau_k]$.
\end{lemma}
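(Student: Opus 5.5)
The plan is to show that on $[0,\tau_k]$ the observation $Y$ and the reference process $Y^k$ carry the same information, and then to apply the classical Brownian martingale representation under $\Q^k$ followed by stopping.

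\emph{Step 1: the two observations agree up to $\tau_k$.} From the definitions, $Y_t=V_t+\int_0^tH_s\dd s$, so $Y_{t\wedge\tau_k}=Y^k_{t\wedge\tau_k}$ for all $t$. We also have $u=\Psi(\cdot,Y)$ for some predictable functional $\Psi$ on path space: $u$ is $\FY$-predictable, and a Doob-type functional representation holds modulo null sets.

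\emph{Step 2: a $\widetilde{\mathbb G}^k$-stopping-time version of $\tau_k$.} Define on path space
\[
\tilde\tau_k(y):=\inf\Big\{t:\int_0^t|\Psi(s,y)|^2\dd s\ge k\Big\}\wedge T,
\]
and set $\hat\tau_k:=\tilde\tau_k(Y^k)$. The functional $\Psi(s,y)$ depends only on $y|_{[0,s)}$, and $Y^k=Y$ on $[0,\tau_k]$. So for $s\le\tau_k$ we get $\Psi(s,Y^k)=\Psi(s,Y)=u_s$ a.s. This gives $\hat\tau_k=\tau_k$ a.s.; more precisely, $\hat\tau_k\ge\tau_k$ with equality at the hitting time, since the integrals coincide up to $\tau_k$. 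Because the integral $\int_0^t|\Psi(s,Y^k)|^2\dd s$ is $\widetilde{\mathcal G}^k_t$-adapted and continuous, $\hat\tau_k$ is a $\widetilde{\mathbb G}^k$-stopping time.

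\emph{Step 3: the identity \eqref{eq:stopped-observation-identity}.} The standard Galmarino/Courrège–Priouret description of the $\sigma$-field at a stopping time for a natural filtration of a continuous process gives $\widetilde{\mathcal G}^k_{t\wedge\tau_k}=\sigma(Y^k_{s\wedge t\wedge\tau_k}:s\le T)\vee\mathcal N$. This equals $\mathcal G^k_t$. By Step 1 it also equals $\sigma(Y_{s\wedge t\wedge\tau_k})\vee\mathcal N$, which is $\FY_{t\wedge\tau_k}$ by the same description applied to $Y$, with $\tau_k$ an $\FY$-stopping time. Completion is harmless because $\Q^k\sim\Pp$. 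I expect this to be the main technical obstacle: the null-set bookkeeping and the use of Galmarino's test for the completed natural filtrations.

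\emph{Step 4: PRP.} Let $N$ be a $\mathbb G^k$-local martingale under $\Q^k$. Since $\mathcal G^k_t=\widetilde{\mathcal G}^k_{t\wedge\tau_k}$, $N$ is constant after $\tau_k$, i.e. $N_t=N_{t\wedge\tau_k}$. Moreover $N$ is a $\widetilde{\mathbb G}^k$-local martingale: a martingale relative to the stopped filtration that is stopped at $\tau_k$ remains a martingale for the full filtration, by optional sampling and localizing through the same reducing sequence. Under $\Q^k$, $Y^k$ is a Brownian motion with natural filtration $\widetilde{\mathbb G}^k$. Itô's representation therefore gives $N_t=N_0+\int_0^t\zeta_s^\top\dd Y^k_s$ with $\int_0^T|\zeta_s|^2\dd s<\infty$.

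Stopping at $\tau_k$ yields \eqref{eq:stopped-reference-prp}. Finally, replacing $\zeta$ by $\zeta\one_{[0,\tau_k]}$ makes it $\mathbb G^k$-predictable, since predictable processes on $[0,\tau_k]$ for $\widetilde{\mathbb G}^k$ coincide with those for the stopped filtration.
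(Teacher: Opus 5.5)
Your proposal is correct and follows essentially the same route as the paper: a nonanticipative path functional for $u$, the energy hitting-time functional evaluated on $Y^k$ (using $Y=Y^k$ on $[0,\tau_k]$), Galmarino's test for the stopped $\sigma$-fields, and Brownian PRP in $\widetilde{\mathbb G}^k$ carried over to $\mathbb G^k$. The only difference is in the last step. You show that a $\mathbb G^k$-local martingale is constant after $\tau_k$, view it as a $\widetilde{\mathbb G}^k$-local martingale, and apply It\^o's representation directly; the paper instead represents $L^2$ terminal variables, uses optional sampling $\E^{\Q^k}[\xi\mid\mathcal G^k_t]=\widetilde N_{t\wedge\tau_k}$, and then localizes. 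Both are valid, and yours handles general local martingales slightly more directly.
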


\begin{proof}
Let $\mathsf C_m=C([0,T];\R^m)$, write $y^s=y_{\cdot\wedge s}$, and equip
$\mathsf C_m$ with its raw coordinate filtration.  Since $u$ is predictable
for the usual augmentation of the natural filtration of $Y$, predictable
factorization supplies a canonical-predictable Borel map
\[
 \mathfrak u:[0,T]\times\mathsf C_m\longrightarrow U,
 \qquad
 \mathfrak u(s,y)=\mathfrak u(s,y^s),
\]
such that
\begin{equation}
 u_s(\omega)=\mathfrak u(s,Y(\omega))
 \quad \dd s\otimes\dd\Pp\text{-a.e.}
 \label{eq:canonical-control-factorization}
\end{equation}
The same equality holds $\dd s\otimes\dd\Q^k$-a.e. because
$\Q^k\sim\Pp$.  By Fubini, outside one common null set the two accumulated
energies agree for every time.

For $y\in\mathsf C_m$, define the  nondecreasing continuous
functional
\begin{equation*}
 A_t(y):=\int_0^t|\mathfrak u(s,y^s)|^2\dd s,
 \qquad
 \vartheta_k(y):=\inf\{t\in[0,T]:A_t(y)\ge k\}\wedge T.
 \label{eq:canonical-energy-hitting}
\end{equation*}
For $t<T$, the event $\{\vartheta_k\le t\}$ coincides with
$\{A_t\ge k\}$ and depends only on $y^t$; at $t=T$ it is the whole path
space.  Hence $\vartheta_k$ is a stopping-time  for the raw
coordinate filtration.  Equation
\eqref{eq:canonical-control-factorization} and Fubini give
\begin{equation}
 \tau_k=\vartheta_k(Y)
 \quad \Pp\text{- and }\Q^k\text{-a.s.}
 \label{eq:tau-as-Y-functional}
\end{equation}

Put $\tau=\vartheta_k(Y)$ and $\tau'=\vartheta_k(Y^k)$ on the full-probability
set on which \eqref{eq:tau-as-Y-functional} holds.  By the definition of
$Y^k$, the paths $Y$ and $Y^k$ agree on $[0,\tau]$.  Nonanticipativity of
$\mathfrak u$ therefore gives
\[
 A_t(Y^k)=A_t(Y),\qquad 0\le t\le\tau.
\]
It is then easy to see that
\begin{equation*}
 \tau_k=\vartheta_k(Y^k)\qquad \Q^k\text{-a.s.}
 \label{eq:tau-as-Yk-functional}
\end{equation*}
Replacing $\tau_k$ on a common null set by the right-hand side makes it an
$\widetilde{\mathbb G}^k$-stopping time without changing any stopped process.

Galmarino's test may now be applied in the reference Brownian filtration and
gives
\[
 \sigma(Y^k_{s\wedge\tau_k}:0\le s\le t)\vee\mathcal N
 =\widetilde{\mathcal G}^k_{t\wedge\tau_k}.
\]
On the other hand, $Y^k_{\cdot\wedge\tau_k}
=Y_{\cdot\wedge\tau_k}$.  Applying the same stopped-path identity in the
usual natural filtration of the physical observation gives
\eqref{eq:stopped-observation-identity}, including the common completion.

It remains to verify the representation assertion.  If
$\xi\in L^2(\mathcal G_T^k,\Q^k)=L^2(\widetilde{\mathcal G}^k_{\tau_k},\Q^k)$ and
\[
 \widetilde N_t:=\E^{\Q^k}[\xi\mid\widetilde{\mathcal G}_t^k],
\]
the Brownian predictable representation property in $\widetilde{\mathbb G}^k$ gives
\[
 \widetilde N_t=\widetilde N_0+\int_0^t\zeta_s^\top\dd Y_s^k.
\]
Optional sampling and \eqref{eq:stopped-observation-identity} yield
\[
 \E^{\Q^k}[\xi\mid\mathcal G_t^k]
 =\widetilde N_{t\wedge\tau_k}
 =\widetilde N_0+
   \int_0^{t\wedge\tau_k}\zeta_s^\top\dd Y_s^k.
\]
The stopped integrand has a $\mathbb G^k$-predictable version.  This proves
the representation for square-integrable martingales; localization and
pasting of the stopped integrands prove \eqref{eq:stopped-reference-prp} for
local martingales.
\end{proof}

The same factorization also fixes the predictable version of the stopped
control.  Namely,
\begin{equation}
 \one_{\{s\le\tau_k\}}u_s
 =\one_{\{s\le\tau_k\}}\mathfrak u(s,Y^k)
 \quad \dd s\otimes\dd\Q^k\text{-a.e.},
 \label{eq:stopped-control-factorization}
\end{equation}
and the right-hand side has a $\widetilde{\mathbb G}^k$-predictable version because
$\mathfrak u(\cdot,Y^k)$ is $\widetilde{\mathbb G}^k$-predictable and is stopped at the
$\widetilde{\mathbb G}^k$-stopping time $\tau_k$.  By equivalence the same statement
holds under $\Pp$.

Every fixed-$k$ conditional expectation, optional or predictable projection,
and stochastic integral below is taken in $\mathbb G^k$ and is stopped no
later than $\tau_k$.  We therefore write $\Q$ for $\Q^k$ and, only inside
such stopped identities, write $Y$ for $Y^k$.

If $\ell\ge k$ and $A\in\F_{\tau_k}$, where $\F_{\tau_k}$ is the completed stopped $\sigma$-field, optional sampling for the density martingale gives
\begin{align}
 \Q^\ell(A)
 &=\E^{\Pp}\!\left[\one_A\Lambda_T^\ell\right]
  =\E^{\Pp}\!\left[\one_A\Lambda_{\tau_k}^\ell\right]
  =\E^{\Pp}\!\left[\one_A\Lambda_{\tau_k}^k\right]
  =\Q^k(A).
 \label{eq:Q-consistency}
\end{align}
Indeed, the two stochastic exponentials have the same integrand up to $\tau_k$.  Hence
$\Q^\ell|_{\F_{\tau_k}}=\Q^k|_{\F_{\tau_k}}$ on the completed stopped $\sigma$-field; equivalence with $\Pp$ ensures that the completion is common to all the measures involved.

The change of measure also preserves the initial marginal: since $\Lambda^k$ is a uniformly integrable martingale,
$\E^{\Pp}[\Lambda_T^k\mid\F_0]=1$.  Thus $X_0$ has law $\nu_0$ under both $\Pp$ and $\Q^k$.

Let
\begin{equation*}
 M_t^k:=(\Lambda_t^k)^{-1}.
 \label{eq:M-k}
\end{equation*}
Then, $M^k$ is a uniformly integrable $\Q^k$-martingale and satisfies
\begin{equation}
 \dd M_t^k=M_t^kH_t^\top\one_{\{t\le\tau_k\}}\dd Y_t,
 \qquad M_0^k=1.
 \label{eq:M-k-SDE}
\end{equation}

Since $M^k_{\cdot\wedge\tau_k}$ is a class-$D$ process, the optional
regular-conditional-kernel theorem applied to the random finite-measure
process
\[
 M^k_{t\wedge\tau_k}\delta_{X_{t\wedge\tau_k}}(\dd x)
\]
gives a $\mathbb G^k$-optional kernel
$(t,\omega)\mapsto\mu_t^k(\omega,\dd x)$ with values in
$\mathcal M_+(\R^d)$.  It may be fixed on one countable
convergence-determining class containing $1$ and then extended by the
monotone-class theorem so that, for every bounded Borel $\varphi$,
\begin{equation}
 \ip{\mu_t^k}{\varphi}
 =\E^{\Q}\!\left[
     M^k_{t\wedge\tau_k}\varphi(X_{t\wedge\tau_k})
     \mid\mathcal G_t^k
   \right]
 \label{eq:mu-definition}
\end{equation}
as an optional-projection identity.  In particular, the corresponding
identity holds at every $\mathbb G^k$-stopping time.  All pairings below use
this one kernel, not separately selected scalar conditional expectations.

We next specify, from this same kernel, the predictable versions used under
time integrals.  First let
\begin{equation}
 \Psi_s(\omega,x)
 =\sum_{j=1}^J\xi_s^j(\omega)\varphi_j(x),
 \label{eq:predictable-simple-kernel}
\end{equation}
where each $\xi^j$ is bounded and $\mathbb G^k$-predictable and each
$\varphi_j$ is bounded Borel.  Define
\begin{equation}
 \one_{\{s\le\tau_k\}}\ip{\mu_s^k}{\Psi_s}
 :={}^{p,\mathbb G^k,\Q}\!\left(
       M_s^k\Psi_s(X_s)\one_{\{s\le\tau_k\}}
    \right),
 \label{eq:predictable-pairing}
\end{equation}
where the notation ${}^{p,\mathbb G^k,\Q}$ stands for $\Q$-predictable projection
with respect to the filtration $\mathbb G^k$.
For a simple kernel \eqref{eq:predictable-simple-kernel}, the pull-out
property of predictable projection gives, $\dd s\otimes\dd\Q$-a.e.,
\[
 {}^{p,\mathbb G^k,\Q}\!\left(
       M_s^k\Psi_s(X_s)\one_{\{s\le\tau_k\}}
    \right)
 =\sum_{j=1}^J\xi_s^j
   \one_{\{s\le\tau_k\}}\ip{\mu_s^k}{\varphi_j}.
\]
Indeed, $\mathbb G^k$ is a continuous stopped Brownian filtration by
\Cref{lem:reference-stopped-prp}; hence its optional and predictable
projections agree $\dd s\otimes\dd\Q$-a.e. for the absolutely
continuous clock $\dd s$.  Thus the right-hand side is precisely integration
of the simple random kernel against the fixed optional kernel.

The bounded simple kernels above generate
$\mathcal P(\mathbb G^k)\otimes\mathcal B(\R^d)$.  The functional monotone
class theorem therefore extends \eqref{eq:predictable-pairing} first to every
bounded nonnegative
$\mathcal P(\mathbb G^k)\otimes\mathcal B(\R^d)$-measurable $\Psi$, and then
by positive/negative decomposition to bounded signed or vector-valued
$\Psi$.  Truncation and localization give the same conclusion whenever
\[
 M_s^k|\Psi_s(X_s)|\one_{\{s\le\tau_k\}}
\]
is locally integrable.  We always choose the predictable projection in
\eqref{eq:predictable-pairing} to be zero on $(\tau_k,T]$.
By \eqref{eq:stopped-control-factorization}, the stopped kernels
$L_s\varphi$, $\varphi h(\cdot,u_s)$, and $h(\cdot,u_s)$ fall within this
construction.  The estimates in the proof of \Cref{thm:stopped-zakai} below
verify the required integrability for the first two; \Cref{lem:mass-process}
does so for the last.

For the moment write
\begin{equation*}
 \overline Z_t^k:=\ip{\mu_t^k}{1}.
 \label{eq:raw-mass}
\end{equation*}
Normalization is postponed until \Cref{lem:mass-process}, where the kernel
and its mass are modified on the same evanescent set so that the mass is one
strictly positive continuous process simultaneously for all times.

Put $a=\sigma\sigma^\top$ and define
\begin{equation*}
 L_t\varphi(x)
 :=\frac12a^{ij}(x,u_t)D_{ij}\varphi(x)
   +b^i(x,u_t)D_i\varphi(x).
 \label{eq:L}
\end{equation*}
Repeated state indices are summed from $1$ to $d$ and observation indices from $1$ to $m$.

\begin{theorem}[Stopped Zakai equation]
\label{thm:stopped-zakai}
For every $\varphi\in C_c^2(\R^d)$,
\begin{align}
 \ip{\mu_t^k}{\varphi}
  ={}&\ip{\nu_0}{\varphi}
  +\int_0^{t\wedge\tau_k}\ip{\mu_s^k}{L_s\varphi}\dd s
  +\sum_{a=1}^m\int_0^{t\wedge\tau_k}
       \ip{\mu_s^k}{\varphi h^a(\cdot,u_s)}\dd Y_s^a.
 \label{eq:zakai-stopped}
\end{align}
\end{theorem}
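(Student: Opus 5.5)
The plan is the standard reference-probability derivation, carried out at level $k$ under $\Q=\Q^k$ and stopped at $\tau_k$. Fix $\varphi\in C_c^2(\R^d)$.

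\textbf{Step 1: It\^o product.} Apply It\^o's formula to $\varphi(X_{t\wedge\tau_k})$ using the state equation, which is unchanged under $\Q$ because $W$ is still a $\Q$-Brownian motion independent of $Y^k$. Then form the product with $M^k_{t\wedge\tau_k}$ using \eqref{eq:M-k-SDE}. The cross variation $[W,Y^k]$ vanishes, so
\begin{align*}
 M^k_{t\wedge\tau_k}\varphi(X_{t\wedge\tau_k})
 ={}&\varphi(X_0)+\int_0^{t\wedge\tau_k}M^k_sL_s\varphi(X_s)\dd s
 +\int_0^{t\wedge\tau_k}M^k_sD\varphi(X_s)^\top\sigma(X_s,u_s)\dd W_s\\
 &+\int_0^{t\wedge\tau_k}M^k_s\varphi(X_s)H_s^\top\dd Y^k_s .
\end{align*}

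\textbf{Step 2: conditioning.} Condition on $\mathcal G^k_t$ under $\Q$ and use \eqref{eq:mu-definition}. Three facts are needed.
\begin{enumerate}[label=(\roman*)]
 \item $\E^{\Q}[\varphi(X_0)\mid\mathcal G^k_t]=\ip{\nu_0}{\varphi}$. This holds because $X_0$ is independent of $Y^k$ under $\Q$ and has law $\nu_0$.
 \item The $\dd W$ integral has zero conditional expectation given $\mathcal G^k_t$. Since $(W,Y^k)$ is a Brownian motion and $\mathcal G^k_t\subset\widetilde{\mathcal G}^k_T$, we may enlarge to $\F_s\vee\widetilde{\mathcal G}^k_T$, under which $W$ stays a Brownian motion. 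This is the standard independence argument, using that $W$ is independent of $\widetilde{\mathcal G}^k_T$ and has independent increments.
 \item For the time integral and the $\dd Y^k$ integral, conditioning commutes with integration. For $\dd s$ this is Fubini combined with the predictable-pairing construction \eqref{eq:predictable-pairing}, which gives the integrand $\ip{\mu^k_s}{L_s\varphi}$. For $\dd Y^k$ the cleanest route is to test against $\mathcal G^k_t$-measurable variables of the form $\cE(\int_0^{\cdot\wedge\tau_k}\zeta^\top\dd Y^k)_t$ with $\zeta$ bounded and deterministic. Such variables are total in $L^2(\mathcal G^k_t)$ by \Cref{lem:reference-stopped-prp}. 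It\^o's isometry/product then identifies the conditional expectation of $\int M^k\varphi(X)H^\top\dd Y^k$ with $\int \ip{\mu^k_s}{\varphi h(\cdot,u_s)}\dd Y^k_s$, using the predictable projection \eqref{eq:predictable-pairing} and the factorization \eqref{eq:stopped-control-factorization}. The latter makes $\varphi h(\cdot,u_s)$ a legitimate $\mathcal P(\mathbb G^k)\otimes\mathcal B$ kernel.
\end{enumerate}

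\textbf{Step 3: integrability (the main obstacle).} All of Step 2 requires integrability of $M^k_s\abs{L_s\varphi(X_s)}$ and $M^k_s\abs{\varphi(X_s)H_s}$. Since $\varphi$ has compact support, $|L_s\varphi(x)|\le C_\varphi(1+|u_s|^2)$ on the support and $|\varphi h|\le C_\varphi(1+|u_s|)$. The only unbounded factor is the control, whose energy is at most $k$ before $\tau_k$. Under $\Q$, $M^k$ is uniformly integrable but not bounded, so I would localize with $\rho_j:=\inf\{t:M^k_t\ge j\}\wedge\tau_k$. Then:
\begin{itemize}
 \item $\E^{\Q}\int_0^{\rho_j}M^k_s(1+|u_s|^2)\dd s\le j(T+k)$, which handles the $\dd s$ term.
 \item The $\dd Y^k$ integrand has square-integrable stopped version, since $\int_0^{\rho_j}(M^k_s)^2(1+|u_s|)^2\dd s\le j^2C(T+k)$.
\end{itemize}
This yields the identity at $t\wedge\rho_j$, i.e.\ with stopping in the kernel at a $\mathbb G^k$-stopping time. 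The catch is that $\rho_j$ is not $\mathbb G^k$-adapted, since $M^k$ depends on $X$. I would therefore localize with the $\mathbb G^k$-stopping time $\inf\{t:\int_0^t\ip{\mu^k_s}{1}(1+|u_s|^2)\dd s\ge j\}$ instead, after establishing finiteness of the mass via the class-$D$ property, $\E^{\Q}\ip{\mu^k_t}{1}=1$.

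\textbf{Step 4: removing the localization.} Let $j\to\infty$, using continuity of the stochastic integrals and the $L^1$ convergence of the left side from uniform integrability of $M^k_{\cdot\wedge\tau_k}$. Then \eqref{eq:zakai-stopped} holds for each $t$, and by continuity of both sides it holds simultaneously for all $t$ almost surely.
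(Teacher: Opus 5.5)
\textbf{Gap in Step 3.} Once you replace the $\mathbb F$-stopping time $\rho_j$ by the $\mathbb G^k$-stopping time $\sigma_j:=\inf\{t:\int_0^t\ip{\mu^k_s}{1}(1+|u_s|^2)\dd s\ge j\}$, the only estimate you actually have is
\[
 \E^{\Q}\int_0^{\sigma_j\wedge\tau_k}M^k_s(1+|u_s|^2)\dd s
 =\E^{\Q}\int_0^{\sigma_j\wedge\tau_k}\ip{\mu^k_s}{1}(1+|u_s|^2)\dd s\le j .
\]
This handles the $\dd s$ term and nothing else. Your square-integrability bound for the $\dd Y^k$ integrand came from $M^k\le j$ before $\rho_j$, so it is lost when you drop $\rho_j$.

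For (ii) you need the $\dd W$ integral to be a true martingale in $\F_s\vee\widetilde{\mathcal G}^k_T$. For (iii) you need the $\dd Y^k$ integral to be a true $\Q$-martingale that can be paired with your test variables. Both require a bound of the form $\E^{\Q}\bigl(\int_0^{\tau_k}(M^k_s)^2(1+|u_s|)^2\dd s\bigr)^{1/2}<\infty$, which essentially means $\E^{\Q}\sup_{s\le\tau_k}M^k_s<\infty$. Uniform integrability of $M^k$, which is all you invoke, does not give this: a nonnegative uniformly integrable martingale need not lie in $H^1$. A related problem: your real exponentials $\cE(\int\zeta^\top\dd Y^k)_t$ are unbounded, so pairing them with an integral known only to be in $H^1$ is not justified. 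You need bounded test variables.

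\textbf{The missing idea} is the entropy bound computed under the physical measure. Under $\Pp$,
\[
 \log M^k_{\tau_k}=\int_0^{\tau_k}H_s^\top\dd V_s+\frac12\int_0^{\tau_k}|H_s|^2\dd s ,
\]
and $\E^{\Pp}\int_0^{\tau_k}|H_s|^2\dd s<\infty$ by \Cref{lem:signal-moments}. Hence $\E^{\Q}[M^k_{\tau_k}|\log M^k_{\tau_k}|]=\E^{\Pp}|\log M^k_{\tau_k}|<\infty$, and Doob's $L\log L$ maximal inequality gives $\E^{\Q}\sup_{s\le\tau_k}M^k_s<\infty$. Combine this with the pathwise compact-support bound $\int_0^{\tau_k}(|\varphi(X_s)H_s|^2+\|D\varphi(X_s)\sigma(X_s,u_s)\|^2+|L_s\varphi(X_s)|)\dd s\le C_{\varphi,k}$ and with BDG. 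Then both stochastic integrals are $H^1(\Q)$ martingales and the drift is integrable on all of $[0,\tau_k]$, so no $\mathbb G^k$-localization is needed.

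With this bound in hand, your enlarged-filtration argument in (ii) is a valid substitute for the paper's strong-orthogonality argument (stopped PRP plus $[W,Y]=0$). Step (iii) then goes through if you pair with bounded stopped elementary $Y$-integrals, as the paper does, instead of real exponentials.

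If you prefer to keep $\sigma_j$, stop conditioning term by term. Instead, test the whole It\^o identity against bounded $\mathbb G^k$-martingales with bounded integrands. The local martingales that must then be true martingales are dominated by $CM^k$ plus integrable drift terms on $[0,\sigma_j]$, so they are of class D by uniform integrability of $M^k$. Note also that $\sigma_j$ eventually exceeding $\tau_k$ requires pathwise boundedness of the c\`adl\`ag mass process, not just $\E^{\Q}\ip{\mu^k_t}{1}=1$.
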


\begin{proof}

We first record the integrability which permits projection without an
$L^2(\Q)$ assumption on the density.  The growth bound, the pathwise control
energy bound, and \Cref{lem:signal-moments} under $\Pp$ give
\begin{equation}
 \E^{\Pp}\int_0^{\tau_k}|H_s|^2\dd s<\infty.
 \label{eq:H2-under-P}
\end{equation}
Under $\Pp$, using $\dd Y_s=H_s\dd s+\dd V_s$ on
$[0,\tau_k]$, we have
\[
 \log M^k_{\tau_k}
 =\int_0^{\tau_k}H_s^\top\dd V_s
   +\frac12\int_0^{\tau_k}|H_s|^2\dd s.
\]
The stochastic integral is square-integrable by \eqref{eq:H2-under-P} and has
mean zero.  Change of measure therefore yields
\begin{equation*}
 \E^{\Q}\!\left[M^k_{\tau_k}\log M^k_{\tau_k}\right]
 =\E^{\Pp}\!\left[\log M^k_{\tau_k}\right]
 =\frac12\E^{\Pp}\int_0^{\tau_k}|H_s|^2\dd s<\infty.
 \label{eq:M-LlogL}
\end{equation*}
The change-of-measure equality is legitimate also for the absolute value:
\[
\E^{\Q}[M^k_{\tau_k}|\log M^k_{\tau_k}|]
=\E^{\Pp}|\log M^k_{\tau_k}|<\infty.
\]
Doob's $L\log L$ maximal inequality now gives
\begin{equation}
 \E^{\Q}\sup_{0\le s\le\tau_k}M_s^k<\infty.
 \label{eq:M-H1}
\end{equation}

Fix $\varphi\in C_c^2(\R^d)$.  Compactness of the supports of
$\varphi$, $D\varphi$, and $D^2\varphi$, together with
\eqref{eq:filtering-growth} and \eqref{eq:tau-k}, gives the pathwise bound
\begin{equation}
 \int_0^{\tau_k}\!\left(
   |\varphi(X_s)H_s|^2
   +\|D\varphi(X_s)\sigma(X_s,u_s)\|^2
   +|L_s\varphi(X_s)|
 \right)\dd s
 \le C_{\varphi,k,T,K}.
 \label{eq:compact-test-energy}
\end{equation}
Indeed, each integrand on the left is bounded by a constant times
$1+|u_s|^2$ whenever it is nonzero.  Hence \eqref{eq:M-H1}, the
Burkholder--Davis--Gundy inequality, and \eqref{eq:compact-test-energy} imply
\begin{equation}
 \E^{\Q}\!\left(\int_0^{\tau_k}
       |M_s^k\varphi(X_s)H_s|^2\dd s\right)^{1/2}
 +\E^{\Q}\!\left(\int_0^{\tau_k}
       \|M_s^kD\varphi(X_s)\sigma(X_s,u_s)\|^2\dd s\right)^{1/2}
 <\infty,     \label{eq:two-H1}
 \end{equation}
 and
 \begin{equation*}   \E^{\Q}\int_0^{\tau_k}M_s^k|L_s\varphi(X_s)|\dd s<\infty.
 \label{eq:drift-L1}
\end{equation*}
Thus both stochastic integrals in the It\^o decomposition below are
$H^1(\Q)$ martingales, and its finite-variation part is integrable.

It\^o's formula, \eqref{eq:M-k-SDE}, and $[W,Y]=0$ give
\begin{align*}
 M^k_{t\wedge\tau_k}\varphi(X_{t\wedge\tau_k})
 ={}&\varphi(X_0)
 +\int_0^{t\wedge\tau_k}M_s^kL_s\varphi(X_s)\dd s\\
 &+\int_0^{t\wedge\tau_k}M_s^k\varphi(X_s)H_s^\top\dd Y_s
 +\int_0^{t\wedge\tau_k}
       M_s^kD\varphi(X_s)\sigma(X_s,u_s)\dd W_s.
\end{align*}
Take the optional projection onto $\mathbb G^k$ and the dual predictable
projection of the integrable drift.  By
\eqref{eq:predictable-pairing}, these projections are respectively
$\ip{\mu_s^k}{L_s\varphi}$ and
$\ip{\mu_s^k}{\varphi h(\cdot,u_s)}$.  The last $W$-integral has zero optional
projection.  Indeed, it is an $H^1$ martingale by \eqref{eq:two-H1}; every
$\mathbb G^k$-local martingale is an integral with respect to
$Y_{\cdot\wedge\tau_k}$ by the stopped PRP; and $[W,Y]=0$, so the $W$-integral
is strongly orthogonal to every $\mathbb G^k$-local martingale.  Testing
against bounded stopped $\mathbb G^k$-martingales, followed by the $H^1$
localization, shows that its optional projection is identically zero.

The standard optional-projection identity for an $H^1$ stochastic integral
with respect to the $\mathbb G^k$-Brownian motion gives
\[
 {}^{o,\mathbb G^k}\!\left(
   \int_0^{\,\cdot\wedge\tau_k}
       M_s^k\varphi(X_s)H_s^\top\dd Y_s
 \right)
 =\int_0^{\,\cdot\wedge\tau_k}
       \ip{\mu_s^k}{\varphi h(\cdot,u_s)}\dd Y_s.
\]
Equivalently, this identity follows by pairing both sides with bounded
$\mathbb G^k$-predictable elementary integrands and using the stopped PRP;
\eqref{eq:two-H1} permits removal of the localization.  Finally,
$\mathcal G_0^k$ is trivial up to $\mathcal N$, and $X_0$ has law $\nu_0$
under $\Q$.  We obtain \eqref{eq:zakai-stopped} with the predictable versions
specified in \eqref{eq:predictable-pairing}.

\end{proof}

Finally, we proceed to establishing the stopped filtering equation.

\begin{lemma}[Mass process]
\label{lem:mass-process}
The optional kernel $\mu^k$ can be modified on an evanescent set, and
$\overline Z^k$ can be replaced by an indistinguishable version $Z^k$, so
that
\[
 Z_t^k=\ip{\mu_t^k}{1}
\]
for all $t$ outside one fixed null set and $Z^k$ is a strictly positive
continuous $\mathbb G^k$-martingale.  With these synchronized versions,
\begin{equation}
 \pi_t^k(\dd x):=\frac{\mu_t^k(\dd x)}{Z_t^k}
 \label{eq:KS-formula}
\end{equation}
is a $\mathbb G^k$-optional probability kernel and, for every bounded Borel
$\varphi$,
\[
 \ip{\pi_t^k}{\varphi}
 =\E^{\Pp}\!\left[
     \varphi(X_{t\wedge\tau_k})\mid\mathcal G_t^k
   \right]
 \quad\text{a.s. for every }t.
\]
\end{lemma}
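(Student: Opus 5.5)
The plan is to identify the raw mass $\overline Z^k_t=\ip{\mu^k_t}{1}$ with the optional projection of the stopped density and to use the stopped PRP to obtain a continuous version. By \eqref{eq:mu-definition} with $\varphi\equiv1$, we have $\overline Z^k_t=\E^{\Q}[M^k_{t\wedge\tau_k}\mid\mathcal G^k_t]$ a.s. for each $t$. Since $M^k$ is a uniformly integrable $\Q$-martingale in the full filtration $\mathbb F$ and $\mathcal G^k_t\subset\F_{t\wedge\tau_k}$ (by \eqref{eq:stopped-observation-identity} and $\FY_{t}\subset\F_t$), the tower property gives
\[
\overline Z^k_t=\E^{\Q}\bigl[M^k_{\tau_k}\mid\mathcal G^k_t\bigr]=:N_t,
\]
which is a closed $\mathbb G^k$-martingale. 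By \Cref{lem:reference-stopped-prp}, $N$ has the representation \eqref{eq:stopped-reference-prp} and therefore admits a continuous version $Z^k$. The optional kernel mass $\overline Z^k$ and $Z^k$ are both optional and agree a.s. at every stopping time (optional projection identity), so the optional section theorem shows that they are indistinguishable. We then redefine $\mu^k_t:=\delta_0$ (or any fixed probability measure times $Z^k_t$) on the evanescent exceptional set where $\ip{\mu^k_t}{1}\neq Z^k_t$; this set is optional, so the modified kernel remains $\mathbb G^k$-optional and all identities \eqref{eq:mu-definition} are unaffected.

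Strict positivity is the step needing care. The direct route is to note that $M^k_{\tau_k}>0$ $\Q$-a.s., so $Z^k_t>0$ a.s. for each fixed $t$; to upgrade this to all $t$ simultaneously, we use the standard fact that a nonnegative continuous (super)martingale which hits zero stays at zero afterwards. Let $\varrho=\inf\{t:Z^k_t=0\}$. Then $Z^k_T=0$ on $\{\varrho\le T\}$, and since $Z^k_T=\E^{\Q}[M^k_{\tau_k}\mid\mathcal G^k_T]>0$ a.s., we obtain $\Q(\varrho\le T)=0$. Alternatively, one can write $Z^k=\cE(\int\ip{\pi}{h}\dd Y)$ via the Zakai equation, but the hitting argument avoids integrability questions, so we use it.

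Finally, normalization. With $Z^k>0$ everywhere off a null set, $\pi^k_t:=\mu^k_t/Z^k_t$ is a ratio of optional objects and hence an optional probability kernel. For bounded Borel $\varphi$, the abstract Bayes (Kallianpur--Striebel) formula under $\dd\Pp/\dd\Q=M^k_T$ gives, using $\mathcal G^k_t\subset\F_{t\wedge\tau_k}$ and optional sampling $\E^{\Q}[M^k_T\mid\F_{t\wedge\tau_k}]=M^k_{t\wedge\tau_k}$,
\[
\E^{\Pp}[\varphi(X_{t\wedge\tau_k})\mid\mathcal G^k_t]
=\frac{\E^{\Q}[M^k_{t\wedge\tau_k}\varphi(X_{t\wedge\tau_k})\mid\mathcal G^k_t]}{\E^{\Q}[M^k_{t\wedge\tau_k}\mid\mathcal G^k_t]}
=\frac{\ip{\mu^k_t}{\varphi}}{Z^k_t}.
\]
Here $M^k_T=(\Lambda^k_T)^{-1}$ is justified because $\Lambda^k$ is constant after $\tau_k$, and $\dd\Pp/\dd\Q=M^k_T$ holds by \Cref{prop:filtering-likelihood}. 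This proves the lemma. The main obstacle is the synchronization of versions (the indistinguishability and evanescent modification), which rests on the optional section theorem.
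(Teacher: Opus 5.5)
Your proposal is correct and follows essentially the same route as the paper: the tower property identifies the mass with the closed martingale $\E^{\Q}[M^k_{\tau_k}\mid\mathcal G^k_t]$, the stopped PRP gives a continuous version, the optional section theorem synchronizes the mass with $Z^k$ after replacing $\mu^k_t$ by $Z^k_t\delta_0$ (not $\delta_0$) on the evanescent disagreement set, and the Bayes formula gives the conditional law. The only difference is the positivity step, and it is cosmetic. The paper applies optional sampling at the first zero $\zeta$ and resets $Z^k\equiv1$ on the exceptional null set, so that $\pi^k$ is a probability kernel on every path; you instead use absorption at zero of a nonnegative continuous martingale together with $Z^k_T>0$ a.s., which rests on the same mechanism.
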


\begin{proof}
Because $M^k$ is stopped at $\tau_k$ and is closed by $M^k_{\tau_k}$, the
tower property and \eqref{eq:mu-definition} give
\begin{equation}
 \overline Z_t^k
 =\E^{\Q}[M^k_{t\wedge\tau_k}\mid\mathcal G_t^k]
 =\E^{\Q}[M^k_{\tau_k}\mid\mathcal G_t^k].
 \label{eq:mass-closed-projection}
\end{equation}
Thus $\overline Z^k$ is the optional version of a closed
$\mathbb G^k$-martingale.  The stopped PRP in
\Cref{lem:reference-stopped-prp} supplies a continuous martingale version
$Z^k$ and a $\mathbb G^k$-predictable, locally square-integrable process
$\beta^k$, chosen to vanish after $\tau_k$, such that
\begin{equation}
 Z_t^k=1+\int_0^{t\wedge\tau_k}(\beta_s^k)^\top\dd Y_s.
 \label{eq:mass-prp-beta}
\end{equation}

This continuous version is strictly positive simultaneously for all times.
Indeed, let
\[
 \zeta:=\inf\{t\in[0,T]:Z_t^k=0\}.
\]
On $\{\zeta<T\}$, optional sampling in
\eqref{eq:mass-closed-projection} gives
\[
 \E^{\Q}\!\left[
   \one_{\{\zeta<T\}}M^k_{\tau_k}
 \right]
 =\E^{\Q}\!\left[
   \one_{\{\zeta<T\}}Z_\zeta^k
 \right]=0.
\]
Since $M^k_{\tau_k}>0$ a.s.,
$\Q(\zeta<T)=0$.  Similarly,
\[
 \E^{\Q}[\one_{\{Z_T^k=0\}}M^k_{\tau_k}]
 =\E^{\Q}[\one_{\{Z_T^k=0\}}Z_T^k]=0,
\]
so $Z_T^k>0$ a.s. as well.  On the union of these null events set
$Z_t^k=1$ for every $t$.  The common completion makes the resulting process
adapted; it remains an indistinguishable continuous martingale version and is
now positive on every path used below.

Both $\overline Z^k=\langle\mu^k,1\rangle$ and $Z^k$ are optional versions of
the same optional projection.  By the optional section theorem their
disagreement set is evanescent.  On that set replace
$\mu_t^k$ by $Z_t^k\delta_0$, leaving it unchanged elsewhere.  This is one
optional-kernel modification and it gives
$\langle\mu_t^k,1\rangle=Z_t^k$ simultaneously for all $t$, without altering
any conditional-expectation or predictable-pairing identity.  Formula
\eqref{eq:KS-formula} now defines a probability kernel simultaneously in
time.  The fixed-time conditional-law assertion follows from the ordinary
Bayes formula
\[
 \E^{\Pp}[\varphi(X_{t\wedge\tau_k})\mid\mathcal G_t^k]
 =\frac{
   \E^{\Q}[M^k_{t\wedge\tau_k}
      \varphi(X_{t\wedge\tau_k})\mid\mathcal G_t^k]
 }{
   \E^{\Q}[M^k_{t\wedge\tau_k}\mid\mathcal G_t^k]
 }.
\]
See \citet[Chapter~3]{Kallianpur1980},
\citet[Chapters~3--5]{BainCrisan2009}, or
\citet[Chapter~5]{Xiong2008}.
\end{proof}

We next identify the Brownian coefficient without presupposing stochastic
integrability of the merely $L^1$ predictable projection.  

\begin{lemma}[Predictable conditional coefficients]
Define, componentwise, the $\Q$-predictable projection
\begin{equation}
 c_s^k
 :={}^{p,\mathbb G^k,\Q}\!\left(
      M_s^kH_s\one_{\{s\le\tau_k\}}
    \right),
 \label{eq:c-stopped-definition}
\end{equation}
and choose its representative to be zero on $(\tau_k,T]$.
Then $c^k$ is locally square integrable and
\begin{equation}
 Z_t^k
 =1+\int_0^{t\wedge\tau_k}(c_s^k)^\top\dd Y_s,
 \qquad
 c_s^k
 =\one_{\{s\le\tau_k\}}\ip{\mu_s^k}{h(\cdot,u_s)}
 \quad \dd s\otimes\dd\Q\text{-a.e.}
 \label{eq:mass-SDE-direct}
\end{equation}
Here and below the random-kernel pairing on the right denotes the predictable
version fixed in \eqref{eq:predictable-pairing}.  
\end{lemma}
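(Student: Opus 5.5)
The plan is to identify the integrand $\beta^k$ in \eqref{eq:mass-prp-beta} with $c^k$. Consider the Itô decomposition of $M^k_{t\wedge\tau_k}$ itself, namely \eqref{eq:M-k-SDE}:
\[
 M^k_{t\wedge\tau_k}=1+\int_0^{t\wedge\tau_k}M_s^kH_s^\top\dd Y_s .
\]
By \eqref{eq:mass-closed-projection}, the optional projection of the left side onto $\mathbb G^k$ is $Z^k$. To project the right side without knowing that $c^k$ is stochastically integrable, I will work through covariations with test martingales. Fix a bounded $\mathbb G^k$-predictable process $\eta$ and set $N_t:=\int_0^{t\wedge\tau_k}\eta_s^\top\dd Y_s$, a bounded-increment $L^2(\Q)$ martingale.

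The core computation evaluates $\E^{\Q}[M^k_{\tau_k}N_{\tau_k}]$ in two ways. First, via the closed projection and \eqref{eq:mass-prp-beta}, it equals
\[
\E^{\Q}[Z^k_{\tau_k}N_{\tau_k}]=\E^{\Q}\int_0^{\tau_k}(\beta^k_s)^\top\eta_s\dd s .
\]
This step needs localization, since $\beta^k$ is only locally square integrable. I will stop at $\sigma_n:=\inf\{t:\int_0^t|\beta^k|^2\ge n\}$ and then pass to the limit using $\E^{\Q}\sup M^k<\infty$ from \eqref{eq:M-H1} together with the boundedness of $N$ in $L^2$.

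Second, the product $M^k N$ has covariation $\int M_s^kH_s^\top\eta_s\dd s$. The local martingale part of $M^kN$ is controlled by the $H^1$ bound: by BDG and \eqref{eq:M-H1}, it suffices that $\E(\int_0^{\tau_k}|M^k_s|^2|\eta_s|^2\dd s)^{1/2}$ is finite, and this holds because $\eta$ is bounded and $\sup M^k\in L^1$. Hence, after localization and using $\int_0^{\tau_k} M_s^k|H_s|\dd s\in L^1(\Q)$ (Cauchy–Schwarz with \eqref{eq:H2-under-P} transferred via $\E^\Q[M^k_{\tau_k}\cdot]=\E^\Pp[\cdot]$), we get
\[
\E^{\Q}[M^k_{\tau_k}N_{\tau_k}]=\E^{\Q}\int_0^{\tau_k}M_s^kH_s^\top\eta_s\dd s=\E^{\Q}\int_0^{\tau_k}(c_s^k)^\top\eta_s\dd s .
\]
The last equality is the defining property of the predictable projection \eqref{eq:c-stopped-definition}, since $\eta$ is predictable and the integrand is $\dd s\otimes\dd\Q$-integrable.

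Comparing the two expressions for all bounded predictable $\eta$, with $\beta^k$ and $c^k$ both taken to vanish after $\tau_k$, gives $\beta^k=c^k$ $\dd s\otimes\dd\Q$-a.e. This yields the local square integrability of $c^k$ and the first identity in \eqref{eq:mass-SDE-direct}. The second identity is \eqref{eq:predictable-pairing} applied to the kernel $\Psi_s=h(\cdot,u_s)\one_{\{s\le\tau_k\}}$, which is admissible by \eqref{eq:stopped-control-factorization}. The local integrability required there is exactly $\E^\Q\int_0^{\tau_k}M^k_s|H_s|\dd s<\infty$, which was already shown.

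The main obstacle is the first evaluation: the $\beta^k$ side is only locally $L^2$, so exchanging limits requires care. The approach is to localize with $\sigma_n\wedge\tau_k$, for which $Z^k_{\sigma_n\wedge\tau_k}=\E^\Q[M^k_{\tau_k}\mid\mathcal G^k_{\sigma_n}]$, and then apply the second computation at the same stopping time. This gives the identity for $\eta\one_{[0,\sigma_n]}$ for every $n$, and $\sigma_n\uparrow\tau_k$ then concludes the argument.
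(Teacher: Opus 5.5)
Your strategy is the paper's. You identify the PRP integrand $\beta^k$ of $Z^k$ with $c^k$ by testing against $N=\int_0^{\cdot\wedge\tau_k}\eta^\top\dd Y$. You evaluate the pairing once through $Z^k$; after stopping at $\sigma_n$, $Z^k-1$ is an $L^2$ martingale and the polarized It\^o isometry applies. You evaluate it once more through the It\^o decomposition of $M^k$, and then use the defining property of the predictable projection.

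The first evaluation at $\sigma_n$ is sound. Integrability of $M^k_{\tau_k}N_{\sigma_n}$ comes from $M^k\ge0$ and the tower property, because $Z^k_{\sigma_n}N_{\sigma_n}\in L^1$. It does not come from combining $\E^\Q\sup M^k<\infty$ with $N\in L^2$: an $L^1$ bound times an $L^2$ bound gives no $L^1$ control of a product.

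The gap is in the second evaluation. For a stopping time $\sigma\le\tau_k$ the product rule gives
\[
 M^k_\sigma N_\sigma
 =\int_0^\sigma M^k_s\eta_s^\top\dd Y_s
 +\int_0^\sigma N_sM^k_sH_s^\top\dd Y_s
 +\int_0^\sigma M^k_sH_s^\top\eta_s\dd s.
\]
Your BDG bound on $\E^\Q(\int_0^{\tau_k}|M^k_s|^2|\eta_s|^2\dd s)^{1/2}$ only makes the first stochastic integral an $H^1$ martingale. The second one, $\int N\,\dd M^k$, is not addressed. $N$ is unbounded, and all you know about $M^k$ is $\E^\Q\sup M^k<\infty$, i.e.\ $M^k-1\in H^1$. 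That does not control $\E^\Q\bigl(\int_0^{\sigma}N_s^2\dd[M^k]_s\bigr)^{1/2}$. So the identity $\E^\Q[M^k_\sigma N_\sigma]=\E^\Q\int_0^\sigma M^k_sH_s^\top\eta_s\dd s$ is not yet justified.

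The paper closes exactly this hole with the second localizer $\kappa_\ell=\inf\{t:|K_t|\ge\ell\}$. Up to $\lambda_n\wedge\kappa_\ell$ the test martingale is bounded by $\ell$, so $\int K\,\dd M^k$ is $H^1$ because $M^k-1$ is. The limit $\ell\uparrow\infty$ is then taken by dominated convergence, using $\int_0^{\lambda_n}|\beta^k_s|\dd s\le\sqrt{nT}$ and \eqref{eq:MH-L1}.

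Alternatively, you can avoid the product rule altogether. Write $\E^\Q[M^k_{\tau_k}N_{\sigma_n}]=\E^\Pp[N_{\sigma_n}]$. Under $\Pp$, $N_{\sigma_n}=\int_0^{\sigma_n}\eta_s^\top\dd V_s+\int_0^{\sigma_n}\eta_s^\top H_s\dd s$ with bounded $\eta$. The density-process argument the paper later uses in \eqref{eq:predictable-Bayes} then gives $\E^\Q\int_0^{\sigma_n}M^k_s\eta_s^\top H_s\dd s$ directly.

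With either repair, the rest of your argument goes through as in the paper: the monotone class on $[0,\sigma_n]$, $\sigma_n\uparrow T$, and the second identity via \eqref{eq:predictable-pairing} and \eqref{eq:stopped-control-factorization}.
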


\begin{proof}

First,
\begin{equation}
 \E^{\Q}\int_0^{\tau_k}M_s^k|H_s|\dd s
 =\E^{\Pp}\int_0^{\tau_k}|H_s|\dd s<\infty
 \label{eq:MH-L1}
\end{equation}
by \eqref{eq:H2-under-P}; hence $c^k$ in
\eqref{eq:c-stopped-definition} exists in $L^1(\dd s\otimes\dd\Q)$.
Let $\eta$ be a bounded $\mathbb G^k$-predictable elementary
$\R^m$-valued process and put
\[
 K_t:=\int_0^{t\wedge\tau_k}\eta_s^\top\dd Y_s.
\]
For $n,\ell\in\N$, define
\[
 \begin{aligned}
 \lambda_n
 &:=\inf\left\{t:\int_0^{t\wedge\tau_k}
                  |\beta_s^k|^2\dd s\ge n\right\}\wedge T,\\
 \kappa_\ell&:=\inf\{t:|K_t|\ge\ell\}\wedge T,
 \qquad \sigma_{n,\ell}:=\lambda_n\wedge\kappa_\ell.
 \end{aligned}
\]
Then the $Z^k$-integral stopped at $\sigma_{n,\ell}$ is square-integrable,
$K^{\sigma_{n,\ell}}$ is bounded, and $K_{\sigma_{n,\ell}}$ is
$\mathcal G_{\sigma_{n,\ell}}^k$-measurable.  Moreover,
\[
 Z_{\sigma_{n,\ell}}^k
 =\E^{\Q}[M_{\sigma_{n,\ell}}^k
           \mid\mathcal G_{\sigma_{n,\ell}}^k].
\]
The stochastic-integral product identity, followed by the defining property
of predictable projection, therefore gives
\begin{align}
 \E^{\Q}\int_0^{\sigma_{n,\ell}}
      \eta_s^\top c_s^k\dd s
 &=\E^{\Q}[(Z_{\sigma_{n,\ell}}^k-1)
                 K_{\sigma_{n,\ell}}]
  =\E^{\Q}[(M_{\sigma_{n,\ell}}^k-1)
                 K_{\sigma_{n,\ell}}]\notag\\
 &=\E^{\Q}\int_0^{\sigma_{n,\ell}}
      \eta_s^\top M_s^kH_s\one_{\{s\le\tau_k\}}\dd s
  =\E^{\Q}\int_0^{\sigma_{n,\ell}}
      \eta_s^\top\beta_s^k\dd s.
 \label{eq:mass-coefficient-testing}
\end{align}
Here the middle product identity is valid because $M^k-1$ is an
$H^1(\Q)$ martingale by \eqref{eq:M-H1}, whereas the stopped $K$ is bounded;
equivalently it follows by one further square-integrable localization and
$H^1$ convergence.  For fixed $n$, let $\ell\uparrow\infty$ in
\eqref{eq:mass-coefficient-testing}.  The left side is uniformly integrable
by Cauchy--Schwarz on $[0,\lambda_n]$, and the two right-side integrands are
dominated in $L^1$ by \eqref{eq:MH-L1}.  Hence
\[
 \E^{\Q}\int_0^{\lambda_n}
   \eta_s^\top(\beta_s^k-c_s^k)\dd s=0.
\]
Since bounded predictable elementary processes determine
$\dd s\otimes\dd\Q$, a monotone-class argument gives
$\beta^k=c^k$ on $[0,\lambda_n]$.  Finally $\lambda_n\uparrow T$ almost
surely, and therefore
\[
 \beta_s^k=c_s^k
 \quad \dd s\otimes\dd\Q\text{-a.e.}
\]
Thus $c^k$ is locally square integrable by
\eqref{eq:mass-prp-beta}, and the first identity in
\eqref{eq:mass-SDE-direct} is established noncircularly.  The second identity
in \eqref{eq:mass-SDE-direct} follows from the predictable-simple-kernel
extension in \eqref{eq:predictable-pairing} and
\eqref{eq:stopped-control-factorization}.
\end{proof}
\begin{lemma}
Set
\begin{equation}
 m_s^k
 :=\begin{cases}
     c_s^k/Z_s^k,&s\le\tau_k,\\
     0,&s>\tau_k.
   \end{cases}
 \label{eq:m-stopped-definition}
\end{equation}
Then $m^k$ is $\mathbb G^k$-predictable and locally square integrable, and
only on the stopped interval one has
\begin{equation}
 m_s^k
 =\one_{\{s\le\tau_k\}}\ip{\pi_s^k}{h(\cdot,u_s)}
 \quad \dd s\otimes\dd\Q\text{-a.e.}
 \label{eq:m-predictable}
\end{equation}
Moreover,
\begin{equation}
 Z_t^k
 =\mathcal E\!\left(
    \int_0^{\,\cdot\wedge\tau_k}(m_s^k)^\top\dd Y_s
   \right)_t.
 \label{eq:mass-exponential}
\end{equation}
The process $m^k$ is also the
$(\mathbb G^k,\Pp)$-predictable projection of
$H\one_{[0,\tau_k]}$, and
\begin{equation}
 \E^{\Pp}\int_0^{\tau_k}|m_s^k|^2\dd s
 =\E^{\Q}\int_0^{\tau_k}Z_s^k|m_s^k|^2\dd s
 <\infty.
 \label{eq:m-P-L2}
\end{equation}

\end{lemma}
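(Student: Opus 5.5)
Predictability of $m^k$ follows from the predictability of $c^k$ (a predictable projection) and of the continuous, hence predictable, positive process $Z^k$ from \Cref{lem:mass-process}, together with $\one_{\{s\le\tau_k\}}$ being predictable. Since $Z^k$ is continuous and strictly positive on every path, $\inf_{s\le T}Z^k_s>0$ pathwise. Hence $\int_0^{\tau_k}|m^k_s|^2\dd s\le (\inf Z^k)^{-2}\int_0^{\tau_k}|c^k_s|^2\dd s<\infty$ a.s., using the local square integrability of $c^k$ from the previous lemma. Identity \eqref{eq:m-predictable} then comes from dividing the second identity of \eqref{eq:mass-SDE-direct} by $Z^k_s$, using $\pi^k_s=\mu^k_s/Z^k_s$ from \eqref{eq:KS-formula}. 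Here the predictable pairing with $\pi^k$ is understood as the predictable pairing with $\mu^k$ divided by the predictable process $Z^k$, consistent with \eqref{eq:predictable-pairing}.

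For \eqref{eq:mass-exponential}, rewrite \eqref{eq:mass-SDE-direct} as $\dd Z^k_t=Z^k_t(m^k_t)^\top\one_{\{t\le\tau_k\}}\dd Y_t$, $Z^k_0=1$. Apply It\^o's formula to $\log Z^k$, legitimate since $Z^k>0$ and continuous. This gives $\log Z^k_t=\int_0^{t\wedge\tau_k}(m^k)^\top\dd Y-\frac12\int_0^{t\wedge\tau_k}|m^k|^2\dd s$, which is the exponential formula. Alternatively, invoke uniqueness for the linear SDE with a locally square-integrable coefficient.

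For the $\Pp$-projection statement, take a bounded $\mathbb G^k$-predictable $\eta$. On $[0,\tau_k]$, use $\dd\Pp/\dd\Q=M^k_{\tau_k}$ and $\E^\Q[M^k_{\tau_k}\mid\mathcal G^k_s]=Z^k_s$, i.e.\ Bayes for predictable projections. Then, by \eqref{eq:MH-L1} and Fubini plus optional/predictable projection,
\[
\E^{\Pp}\int_0^{\tau_k}\eta_s^\top H_s\dd s=\E^{\Q}\int_0^{\tau_k}\eta_s^\top M^k_sH_s\dd s=\E^{\Q}\int_0^{\tau_k}\eta_s^\top c^k_s\dd s=\E^{\Q}\int_0^{\tau_k}Z^k_s\eta_s^\top m^k_s\dd s=\E^{\Pp}\int_0^{\tau_k}\eta_s^\top m^k_s\dd s .
\]
The last equality uses that $\E^\Q[M^k_{\tau_k}\xi_s]=\E^\Q[Z^k_s\xi_s]$ for $\mathcal G^k_s$-measurable $\xi_s$, applied inside the time integral. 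The integrability needed for this step is the main delicate point, and I would first localize with $\eta$ bounded and the stopping times $\lambda_n$. This identifies $m^k$ as the $(\mathbb G^k,\Pp)$-predictable projection of $H\one_{[0,\tau_k]}$.

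The equality in \eqref{eq:m-P-L2} follows by the same Bayes step with $\eta=m^k$, truncated by $|m^k|\le n$ and passed to the limit by monotone convergence. Finiteness comes from conditional Jensen under $\Pp$: $|m^k_s|^2\le \E^{\Pp}[|H_s|^2\mid\mathcal G^k_{s-}]$ in the predictable-projection sense, so $\E^\Pp\int_0^{\tau_k}|m^k|^2\dd s\le\E^\Pp\int_0^{\tau_k}|H_s|^2\dd s<\infty$ by \eqref{eq:H2-under-P}. I expect the main obstacle to be making this predictable Jensen step rigorous. I would handle it by defining the $\Pp$-predictable projection of $|H|^2\one_{[0,\tau_k]}$, which exists because of \eqref{eq:H2-under-P}, and comparing via testing against bounded predictable nonnegative processes.
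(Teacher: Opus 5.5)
Your proposal is correct and follows essentially the same route as the paper. You divide the identities of the preceding lemma by the continuous, strictly positive mass $Z^k$, obtain the exponential form by It\^o's formula for $\log Z^k$, and use the same Fubini/Bayes chain, tested against bounded $\mathbb G^k$-predictable processes, to identify $m^k$ as the $(\mathbb G^k,\Pp)$-predictable projection of $H\one_{[0,\tau_k]}$. The only minor difference is how you get finiteness of $\E^{\Pp}\int_0^{\tau_k}|m_s^k|^2\dd s$: you apply predictable Jensen under $\Pp$ after that identification, while the paper first bounds $\E^{\Q}\int_0^{\tau_k}Z_s^k|m_s^k|^2\dd s$ via predictable conditional Cauchy--Schwarz under $\Q$, namely $|c^k|^2/Z^k\le{}^{p,\mathbb G^k,\Q}(M^k|H|^2\one_{\{s\le\tau_k\}})$, which is the Bayes-transformed form of the same inequality.
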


\begin{proof}
Predictable conditional Cauchy--Schwarz, applied componentwise, gives
\begin{equation*}
 Z_s^k|m_s^k|^2
 =\frac{|c_s^k|^2}{Z_s^k}
 \le{}^{p,\mathbb G^k,\Q}\!\left(
       M_s^k|H_s|^2\one_{\{s\le\tau_k\}}
     \right)
 \quad \dd s\otimes\dd\Q\text{-a.e.}
 \label{eq:mass-jensen}
\end{equation*}
Consequently,
\begin{equation}
 \E^{\Q}\int_0^{\tau_k}Z_s^k|m_s^k|^2\dd s
 \le \E^{\Q}\int_0^{\tau_k}M_s^k|H_s|^2\dd s
 =\E^{\Pp}\int_0^{\tau_k}|H_s|^2\dd s<\infty.
 \label{eq:m-weighted-L2}
\end{equation}
Stopping when $Z^k$ first falls below $1/n$, together with a local
square-integrability sequence for $c^k$, proves local square integrability of
$m^k$.  Since $Z^k$ is continuous and strictly positive on the compact time
interval, these stopping times increase to $T$.  Equations
\eqref{eq:mass-SDE-direct} and \eqref{eq:m-stopped-definition} then give
\eqref{eq:mass-exponential}, while division by $Z^k$ and
\eqref{eq:predictable-pairing} give the stopped identity
\eqref{eq:m-predictable}.

  For every bounded scalar $\mathbb G^k$-predictable process
$\alpha$ (and componentwise for vector tests), Fubini and the density-process
property give
\begin{align}
 &\E^{\Pp}\int_0^T
    \alpha_s H_s\one_{\{s\le\tau_k\}}\dd s\notag\\
 &\quad=\E^{\Q}\int_0^T
    \alpha_s M_s^kH_s\one_{\{s\le\tau_k\}}\dd s
  =\E^{\Q}\int_0^T\alpha_s c_s^k\dd s\notag\\
 &\quad=\E^{\Q}\int_0^T\alpha_s Z_s^km_s^k\dd s
  =\E^{\Pp}\int_0^T\alpha_s m_s^k\dd s.
 \label{eq:predictable-Bayes}
\end{align}
The last equality uses
$Z_s^k=\E^{\Q}[M_s^k\mid\mathcal G_s^k]$.
Thus $m^k={}^{p,\mathbb G^k,\Pp}
(H\one_{[0,\tau_k]})$.  Applying the same density calculation first to
$|m_s^k|^2\wedge n$ and then using monotone convergence and
\eqref{eq:m-weighted-L2} gives
\[
 \E^{\Pp}\int_0^{\tau_k}|m_s^k|^2\dd s
 =\E^{\Q}\int_0^{\tau_k}M_s^k|m_s^k|^2\dd s
 =\E^{\Q}\int_0^{\tau_k}Z_s^k|m_s^k|^2\dd s<\infty,
\]
which is \eqref{eq:m-P-L2}.
\end{proof}

\begin{lemma}
For every $\varphi\in C_c^2(\R^d)$, define the stopped predictable
version
\begin{equation*}
 B_s^k(\varphi)
 :=\one_{\{s\le\tau_k\}}
   \left(
    \ip{\pi_s^k}{\varphi h(\cdot,u_s)}
    -\ip{\pi_s^k}{\varphi}\,m_s^k
   \right).
 \label{eq:m-B}
\end{equation*}
It is $\mathbb G^k$-predictable and locally square integrable.
\end{lemma}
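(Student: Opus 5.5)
We prove the two claims separately: predictability first, then local square integrability. For the first, we write $B^k(\varphi)$ in terms of objects already fixed. On $\{s\le\tau_k\}$ we have $\ip{\pi_s^k}{\varphi h(\cdot,u_s)}=\ip{\mu_s^k}{\varphi h(\cdot,u_s)}/Z_s^k$, where the numerator is the predictable pairing \eqref{eq:predictable-pairing} applied to the kernel $\Psi_s=\varphi\,h(\cdot,u_s)\one_{\{s\le\tau_k\}}$. By \eqref{eq:stopped-control-factorization} this kernel is $\mathcal P(\mathbb G^k)\otimes\mathcal B(\R^d)$-measurable. Its integrability condition holds: on $\supp\varphi$ one has $|h(x,u_s)|\le C_\varphi(1+|u_s|)$, and $\E^{\Q}\int_0^{\tau_k}M^k_s(1+|u_s|)\dd s<\infty$ follows from \eqref{eq:M-H1} and $\int_0^{\tau_k}|u_s|^2\dd s\le k$.

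The other ingredients are also predictable. The factor $\ip{\pi_s^k}{\varphi}=\ip{\mu_s^k}{\varphi}/Z_s^k$ is optional. Because $\mathbb G^k$ is a stopped Brownian filtration (\Cref{lem:reference-stopped-prp}), it coincides $\dd s\otimes\dd\Q$-a.e. with its predictable projection, and that projection is the predictable pairing with $\Psi=\varphi$. In fact $\ip{\mu^k_\cdot}{\varphi}$ is continuous by \eqref{eq:zakai-stopped}, so it is already predictable. $Z^k$ is continuous and strictly positive, and $m^k$ is predictable by the preceding lemma. Hence $B^k(\varphi)$ is a product and quotient of predictable processes, with the indicator making it vanish after $\tau_k$, and is therefore predictable.

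For local square integrability, the key pointwise bound is
\[
 |\ip{\pi_s^k}{\varphi h(\cdot,u_s)}|\le\|\varphi\|_\infty\sup_{x\in\supp\varphi}|h(x,u_s)|\le C_\varphi(1+|u_s|),
\]
which holds because $\pi_s^k$ is a probability kernel (\Cref{lem:mass-process}). Using $|\ip{\pi^k_s}{\varphi}|\le\|\varphi\|_\infty$ as well, we get
\[
 |B_s^k(\varphi)|^2\le C_\varphi\one_{\{s\le\tau_k\}}\bigl(1+|u_s|^2+|m_s^k|^2\bigr).
\]
The term $\int_0^{\tau_k}(1+|u_s|^2)\dd s\le T+k$ is bounded pathwise. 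The term in $m^k$ is handled by the localizing sequence for $m^k$ from the preceding lemma, which increases to $T$. Along that same sequence $\int|B^k_s(\varphi)|^2\dd s$ is integrable, and in particular it is a.s. finite, which is the required local square integrability.

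The one delicate point is version consistency: the predictable pairing $\ip{\mu^k_s}{\varphi h(\cdot,u_s)}$ and the division by $Z^k_s$ must be taken on the synchronized kernel of \Cref{lem:mass-process}. That modification is on an evanescent set, so the identities still hold $\dd s\otimes\dd\Q$-a.e. I do not expect any genuine obstacle beyond this bookkeeping.
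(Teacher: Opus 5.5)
Your proposal is correct and follows the paper's proof essentially step for step. For predictability, you use the continuous version of $\ip{\mu^k}{\varphi}/Z^k$ and take $(Z^k)^{-1}$ times the predictable $\mu^k$-pairing from \eqref{eq:predictable-pairing} for the $\varphi h(\cdot,u_s)$ term; for local square integrability, you use the bound $|B_s^k(\varphi)|^2\le C_\varphi(1+|u_s|^2+|m_s^k|^2)\one_{\{s\le\tau_k\}}$ together with the pathwise energy bound and the local square integrability of $m^k$. Your extra check that $M^k_s|\varphi(X_s)H_s|\one_{\{s\le\tau_k\}}$ is integrable via \eqref{eq:M-H1} is a harmless detail that the paper leaves to its earlier construction.
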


\begin{proof}

If $\varphi\in C_c^2(\R^d)$, the Zakai equation makes
$\langle\mu^k,\varphi\rangle$ continuous.  Hence
$\langle\pi^k,\varphi\rangle
=\langle\mu^k,\varphi\rangle/Z^k$ has a continuous
$\mathbb G^k$-predictable version.  We define the stopped predictable version
of $\langle\pi_s^k,\varphi h(\cdot,u_s)\rangle$ as
$(Z_s^k)^{-1}$ times the corresponding predictable $\mu^k$-pairing from
\eqref{eq:predictable-pairing}.  This proves the predictability asserted for
$B^k(\varphi)$.  The fact that $\pi^k$ is a probability kernel also gives, on
$[0,\tau_k]$,
\[
 \left|\ip{\pi_s^k}{\varphi h(\cdot,u_s)}\right|
 \le C_{\varphi,K}(1+|u_s|),
 \qquad
 \left|\ip{\pi_s^k}{\varphi}\right|
 \le\|\varphi\|_\infty.
\]
It follows that
\[
 |B_s^k(\varphi)|^2
 \le C_{\varphi,K}
      (1+|u_s|^2+|m_s^k|^2)\one_{\{s\le\tau_k\}}.
\]
The pathwise control-energy bound and local square integrability of $m^k$
prove the last assertion.
\end{proof}

\begin{theorem}[Stopped Kushner--Stratonovich equation in common-observation form]
\label{thm:stopped-KS}
For every $\varphi\in C_c^2(\R^d)$,
\begin{align}
 \ip{\pi_t^k}{\varphi}
 ={}&\ip{\nu_0}{\varphi}
 +\int_0^{t\wedge\tau_k}\ip{\pi_s^k}{L_s\varphi}\dd s
 \notag\\
 &+\int_0^{t\wedge\tau_k}B_s^k(\varphi)^\top\dd Y_s
 -\int_0^{t\wedge\tau_k}
       B_s^k(\varphi)^\top m_s^k\dd s.
 \label{eq:KS-common-Y}
\end{align}
Under $\Pp$, the process
\begin{equation*}
 I_t^k
 :=Y_{t\wedge\tau_k}
   -\int_0^{t\wedge\tau_k}m_s^k\dd s
 \label{eq:stopped-innovation}
\end{equation*}
is a continuous square-integrable $(\mathbb G^k,\Pp)$-martingale with
\begin{equation}
 [I^k]_t=(t\wedge\tau_k)I_m.
 \label{eq:innovation-bracket}
\end{equation}
Thus $I^k$ is the innovation Brownian motion on the stochastic interval
$[0,\tau_k]$, held constant after $\tau_k$.  Equivalently, the last two terms in
\eqref{eq:KS-common-Y} equal
$\int_0^{t\wedge\tau_k}B_s^k(\varphi)^\top\dd I_s^k$.
\end{theorem}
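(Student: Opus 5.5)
The plan is to obtain \eqref{eq:KS-common-Y} by applying It\^o's quotient rule to $\ip{\pi_t^k}{\varphi}=\ip{\mu_t^k}{\varphi}/Z_t^k$. Both factors are continuous semimartingales in $\mathbb G^k$ on $[0,\tau_k]$. The numerator is given by the stopped Zakai equation \eqref{eq:zakai-stopped}. The denominator, by \eqref{eq:mass-exponential}, satisfies $\dd Z^k=Z^k(m^k)^\top\dd Y$, which is strictly positive by \Cref{lem:mass-process}.

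Itô's formula for $1/Z^k$ gives
\[
 \dd\bigl(1/Z^k\bigr)=\bigl(1/Z^k\bigr)\bigl(-(m^k)^\top\dd Y+|m^k|^2\dd t\bigr).
\]
Write $\dd\ip{\mu^k}{\varphi}=\ip{\mu^k}{L\varphi}\dd t+\ip{\mu^k}{\varphi h}^\top\dd Y$. The product rule, using $[Y^a,Y^b]_t=\delta_{ab}t$ under $\Q$, then yields
\[
 \dd\ip{\pi^k}{\varphi}
 =\ip{\pi^k}{L\varphi}\dd t
 +\bigl(\ip{\pi^k}{\varphi h}-\ip{\pi^k}{\varphi}m^k\bigr)^\top\dd Y
 +\bigl(\ip{\pi^k}{\varphi}|m^k|^2-\ip{\pi^k}{\varphi h}^\top m^k\bigr)\dd t .
\]
The last drift equals $-B^k(\varphi)^\top m^k$. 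Here all the time pairings are the predictable versions from \eqref{eq:predictable-pairing}, divided by $Z^k$ as in the lemma defining $B^k$. The initial value is $\ip{\nu_0}{\varphi}$ because $Z_0^k=1$.

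Legitimacy of the computation follows from the earlier integrability results. Local square integrability of $m^k$ and $B^k(\varphi)$ was established in the preceding lemmas. Local integrability of $\ip{\pi^k}{L\varphi}$ follows because $|L_s\varphi|\le C(1+|u_s|^2)$ on the compact support, so the pathwise energy bound applies. Everything is done on $[0,\tau_k]$, with integrands vanishing afterwards. The identity holds under $\Q$, hence also $\Pp$-a.s. by equivalence; stochastic integrals against $Y$ agree under the two equivalent measures.

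For the innovation, I would show that $I^k$ is a $(\mathbb G^k,\Pp)$ local martingale. Under $\Pp$ on $[0,\tau_k]$, $Y=V+\int H\dd s$, so $I^k_t=V_{t\wedge\tau_k}+\int_0^{t\wedge\tau_k}(H_s-m_s^k)\dd s$. By \eqref{eq:H2-under-P} and \eqref{eq:m-P-L2}, $\E^{\Pp}\sup_t|I^k_t|^2<\infty$, so $I^k$ is square integrable. For $s<t$ and bounded $A\in\mathcal G_s^k$, take $\alpha=\one_A\one_{(s,t]}$, which is bounded and predictable. The $V$-part contributes zero because $V$ is an $\mathbb F$-Brownian motion under $\Pp$ and $\mathcal G^k_s\subset\F_s$. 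The drift part contributes zero by \eqref{eq:predictable-Bayes}, since $m^k={}^{p,\mathbb G^k,\Pp}(H\one_{[0,\tau_k]})$. Hence $\E^{\Pp}[\one_A(I^k_t-I^k_s)]=0$.

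The bracket \eqref{eq:innovation-bracket} is pathwise: the finite-variation part contributes nothing, so $[I^k]_t=[Y]_{t\wedge\tau_k}=(t\wedge\tau_k)I_m$. Lévy's characterization for the time-changed stopped process then identifies $I^k$ as a Brownian motion on $[0,\tau_k]$. Finally, $\dd I^k=\dd Y-m^k\dd t$ on $[0,\tau_k]$ gives the equivalent form of the last two terms.

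The step I expect to be the main obstacle is ensuring that the pairings produced by the quotient rule coincide $\dd s\otimes\dd\Q$-a.e. with the predictable versions defining $B^k$. In particular, $\ip{\mu^k}{\varphi h}/Z^k$ must match the definition of $\ip{\pi^k}{\varphi h}$. I would handle this by appealing directly to the construction in the $B^k$ lemma and to the continuity of $Z^k$, which makes $1/Z^k$ predictable.
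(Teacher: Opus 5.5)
Your proposal is correct and follows the paper's proof essentially step by step. The quotient rule applied to $\ip{\mu^k}{\varphi}/Z^k$ under $\Q$ yields the same stochastic coefficient $B^k(\varphi)$ and drift $-B^k(\varphi)^\top m^k$, and the identity is transferred to $\Pp$ by equivalence. The innovation property comes from the predictable-projection identity \eqref{eq:predictable-Bayes} tested against $\one_A\one_{(s,t]}$, with the bracket computed pathwise. The paper is only more explicit about the localization: it stops when $Z^k+(Z^k)^{-1}$ or $\int_0^{\cdot\wedge\tau_k}(|m_s^k|^2+|B_s^k(\varphi)|^2)\dd s$ reaches $n$ and then lets $n\uparrow\infty$, which your appeal to local square integrability implicitly covers.
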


\begin{proof}
Write $N_t^\varphi=\ip{\mu_t^k}{\varphi}$.  By
\Cref{thm:stopped-zakai,lem:mass-process}, under $\Q$,
\begin{align*}
 \dd N_t^\varphi
 &=\ip{\mu_t^k}{L_t\varphi}\dd t
   +\ip{\mu_t^k}{\varphi h(\cdot,u_t)}^\top\dd Y_t,\\
 \dd Z_t^k&=Z_t^k(m_t^k)^\top\dd Y_t,
 \qquad t\le\tau_k.
\end{align*}
The first process is continuous by the Zakai equation; $Z^k$ is continuous and
strictly positive by \Cref{lem:mass-process}.  Moreover,
$m^k$ and $B^k(\varphi)$ are locally square-integrable, while
\[
 \left|\ip{\pi_s^k}{L_s\varphi}\right|
 \le C_{\varphi,K}(1+|u_s|^2),
\]
so every term in the quotient calculation is locally integrable.  Apply
It\^o's formula to $N^\varphi/Z^k$ after stopping when
$Z^k+(Z^k)^{-1}$ or
$\int_0^{\,\cdot\wedge\tau_k}(|m_s^k|^2+|B_s^k(\varphi)|^2)\dd s$
reaches $n$.  The stochastic coefficient is
\[
 \frac{\ip{\mu_t^k}{\varphi h(\cdot,u_t)}}{Z_t^k}
 -\frac{N_t^\varphi}{Z_t^k}m_t^k
 =B_t^k(\varphi),
\]
and the quadratic-covariation drift is
\[
 \frac{N_t^\varphi}{Z_t^k}|m_t^k|^2
 -\frac{\ip{\mu_t^k}{\varphi h(\cdot,u_t)}}{Z_t^k}^{\!\top}m_t^k
 =-B_t^k(\varphi)^\top m_t^k.
\]
Letting $n\uparrow\infty$ proves \eqref{eq:KS-common-Y} under $\Q$.  
Because $\Q^k\sim\Pp$ and stochastic integrals with respect to a fixed
continuous semimartingale are invariant under an equivalent change of
probability, \eqref{eq:KS-common-Y} is the same pathwise semimartingale
identity under $\Pp$.

It remains to verify the innovation assertion with the stopped versions just
fixed.  Equation \eqref{eq:predictable-Bayes} says exactly that
\[
 m^k
 ={}^{p,\mathbb G^k,\Pp}
    (H\one_{[0,\tau_k]}),
\]
and \eqref{eq:m-P-L2} supplies the required square integrability.  More
explicitly, if $\alpha$ is a bounded $\mathbb G^k$-predictable elementary
$\R^m$-valued process, then, using
$\dd Y_s=H_s\dd s+\dd V_s$ under $\Pp$ and the fact that $\alpha$ is also
$\mathbb F$-predictable,
\begin{align*}
 \E^{\Pp}\int_0^{\tau_k}\alpha_s^\top\dd Y_s
 &=\E^{\Pp}\int_0^{\tau_k}\alpha_s^\top H_s\dd s\\
 &=\E^{\Pp}\int_0^{\tau_k}\alpha_s^\top m_s^k\dd s.
\end{align*}
Taking $\alpha_s=\xi\one_{(r,t]}(s)e_j$ with bounded
$\xi\in\mathcal G_r^k$ gives directly
$\E^{\Pp}[\xi(I_t^{k,j}-I_r^{k,j})]=0$; the required integrability follows
from \eqref{eq:H2-under-P} and \eqref{eq:m-P-L2}.  A monotone-class argument
therefore shows that
\[
 I_t^k=Y_{t\wedge\tau_k}
       -\int_0^{t\wedge\tau_k}m_s^k\dd s
\]
is a continuous $(\mathbb G^k,\Pp)$-martingale.  Its quadratic variation is
pathwise
\[
 [I^k]_t=[Y_{\cdot\wedge\tau_k}]_t
 =(t\wedge\tau_k)I_m.
\]
The bracket is bounded by $T I_m$, so $I^k$ is square-integrable and
\eqref{eq:innovation-bracket} follows.  Substitution of
$\dd I_t^k=\dd Y_t-m_t^k\dd t$ on $[0,\tau_k]$ gives the final formulation.

\end{proof}

\section{Pathwise uniqueness for stopped Zakai equation}
\label{sec:measure-duality}

The aim of this section is to establish the pathwise uniqueness of the stopped Zakai equation 
\eqref{eq:zakai-stopped} by duality argument.

\subsection{Admissible measure-valued solutions}

Let $\cM_+(\R^d)$ denote the finite nonnegative Borel measures on $\R^d$, equipped with the narrow topology. Fix $k$ and 
take $q_0=d+4$.

\begin{definition}[Admissible stopped Zakai solution]
\label{def:admissible-zakai}
An $\mathbb F^Y$-adapted process
$\mu:[0,\tau_k]\times\Omega\to\cM_+(\R^d)$ is an admissible solution of the stopped Zakai equation with initial measure $\nu$ if:
\begin{enumerate}[label=\textnormal{(\roman*)}]
\item its paths are narrowly continuous;
\item for every $\varphi\in C_c^2(\R^d)$, it satisfies \eqref{eq:zakai-stopped} with $\nu$ in place of $\nu_0$;
\item the family
\begin{equation*}
 \left\{\ip{\mu_\rho}{1}:\rho\le\tau_k
     \text{ is an }\mathbb F^Y\text{-stopping time}\right\}
 \label{eq:mass-class-D}
\end{equation*}
is uniformly integrable;
\item there are stopping times $\eta_N\uparrow\tau_k$ such that
\begin{equation}
 \sup_{0\le s\le\eta_N}
 \ip{\mu_s}{\br{\cdot}^{q_0}}<\infty
 \quad\Q\text{-a.s. for every }N.
 \label{eq:local-q0-moment}
\end{equation}
\end{enumerate}
\end{definition}

\begin{proposition}[The actual unnormalized filter is admissible]
\label{prop:actual-admissible}
Assume, in addition to \Cref{ass:filtering}, that
\begin{equation}
 \int_{\R^d}\br{x}^{q_0}\nu_0(\dd x)<\infty.
 \label{eq:initial-q0}
\end{equation}
Then the unnormalized filter $\mu^k$ defined by \eqref{eq:mu-definition} is an admissible stopped Zakai solution.
\end{proposition}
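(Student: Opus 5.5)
We check the four items of \Cref{def:admissible-zakai} for $\mu^k$ in turn. Adaptedness is already built in: $\mu^k$ is $\mathbb G^k$-optional, and $\mathcal G^k_t=\FY_{t\wedge\tau_k}$ by \Cref{lem:reference-stopped-prp}. Item (ii) is exactly \Cref{thm:stopped-zakai}. Item (iii) follows from \Cref{lem:mass-process} and \eqref{eq:mass-closed-projection}. There $Z^k_t=\ip{\mu^k_t}{1}=\E^{\Q}[M^k_{\tau_k}\mid\mathcal G^k_t]$ is a continuous martingale closed by the integrable variable $M^k_{\tau_k}$. By optional sampling, $\ip{\mu_\rho}{1}=\E^{\Q}[M^k_{\tau_k}\mid\mathcal G^k_\rho]$ for every stopping time $\rho\le\tau_k$, and conditional expectations of one integrable variable form a uniformly integrable family. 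This is taken under $\Q$, the measure in which the admissible class is phrased.

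For item (iv) I would use the optional-projection identity \eqref{eq:mu-definition} extended to the nonnegative unbounded function $\br{\cdot}^{q_0}$ by monotone convergence. This gives, at every $\mathbb G^k$-stopping time, $\ip{\mu^k_t}{\br{\cdot}^{q_0}}=\E^{\Q}[M^k_{t\wedge\tau_k}\br{X_{t\wedge\tau_k}}^{q_0}\mid\mathcal G^k_t]$. The right side is dominated by the optional projection of $S:=\sup_{s\le\tau_k}M^k_s\br{X_s}^{q_0}$, i.e.\ by the martingale $\E^{\Q}[S\mid\mathcal G^k_t]$. A nonnegative càdlàg martingale with finite terminal mean has an a.s.\ finite running supremum, so this yields (iv) with $\eta_N$ equal to the first time that martingale exceeds $N$ (capped at $\tau_k$). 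It remains to show $\E^{\Q}S<\infty$. Changing measure only at the terminal time does not directly give this, but $M^k_s\br{X_s}^{q_0}$ on $[0,\tau_k]$ is a $\Q$-semimartingale, and by It\^o's formula with \eqref{eq:M-k-SDE}
\[
\dd\bigl(M^k\br{X}^{q_0}\bigr)=M^k\bigl(L\br{\cdot}^{q_0}\bigr)(X)\dd t+M^k\br{X}^{q_0}H^\top\dd Y+M^kD\br{\cdot}^{q_0}\sigma\,\dd W .
\]
Taking $\Q$-expectations reduces everything to $\Pp$-moments of $X$. The drift term satisfies $\E^{\Q}\int M^k|L\br{\cdot}^{q_0}(X)|\dd s=\E^{\Pp}\int|L\br{\cdot}^{q_0}(X)|\dd s$, which is bounded by \Cref{lem:signal-moments} with $q=q_0$ together with the energy bound on $u$ (the term $|u|^2\br{X}^{q_0-2}$ is handled by the pathwise sup of $X$ times $\int|u|^2\le k$). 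This step uses \eqref{eq:initial-q0}.

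The main obstacle is the supremum of the two martingale terms. I would use BDG under $\Q$ and bound $\E^{\Q}\bigl(\int_0^{\tau_k}(M^k)^2\br{X}^{2q_0}|H|^2\dd s\bigr)^{1/2}$ by $\E^{\Q}\bigl[(\sup M^k\br{X}^{q_0})^{1/2}(\sup M^k)^{1/2}\sup\br{X}^{q_0}\ldots\bigr]$. The cleaner route is the standard absorption: bound the quadratic variation by $\sup_s(M^k\br{X}^{q_0})\cdot\int M^k\br{X}^{q_0}|H|^2\dd s$, apply Young's inequality $\sqrt{ab}\le\varepsilon a+b/(4\varepsilon)$, and absorb $\varepsilon\E^{\Q}S$ into the left side. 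Doing this after localizing with $\eta_n$ makes the left side finite. The remaining term is $\E^{\Q}\int M^k\br{X}^{q_0}|H|^2\dd s=\E^{\Pp}\int\br{X}^{q_0}|H|^2\dd s$. Since $|H|^2\le C(\br{X}^2+|u|^2)$, this term needs moment order $q_0+2$ of $X$, one step too high, and I would try two fixes in this order. First, replace the target weight by $\br{\cdot}^{q_0-2}$ in the It\^o computation and recover order $q_0$ through the already available lower-order conditional bound. This fails as stated, so I would instead run the argument with weight $\br{x}^{q}$ for $q<q_0$ and then use that $\ip{\mu}{\br{\cdot}^{q_0}}$ itself is only required to be locally finite, not integrable. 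Concretely, the pathwise estimate $\int_0^{\tau_k}\br{X}^{q_0}|H|^2\dd s\le C\sup\br{X}^{q_0}(\sup\br{X}^2 T+k)$ suggests stopping additionally at $\eta'_N=\inf\{t:|X_t|\ge N\}$ in the signal. That is not observation-adapted, however, so ultimately I would take the more robust path: show $\E^{\Q}\sup_{t\le\tau_k}\ip{\mu_t}{\br{\cdot}^{q_0}}$-type control via \Cref{lem:signal-moments} at order $q_0+2$ if available, and otherwise settle for local finiteness from the martingale-domination argument with $S$ replaced by $\sup_s M^k_s\cdot\sup_s\br{X_s}^{q_0}$. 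The integrability of that product under $\Q$ follows from Hölder-type $L\log L$ duality between $\sup M^k$ (controlled by \eqref{eq:M-H1} and the entropy bound) and exponential-free polynomial moments of $X$ under $\Pp$. Finally, narrow continuity (item (i)) follows from continuity of $t\mapsto\ip{\mu^k_t}{\varphi}$ for $\varphi\in C^2_c$ (from the Zakai equation) together with continuity of the mass $Z^k$ and tightness supplied by the local moment bound of item (iv).
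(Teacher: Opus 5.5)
Your handling of adaptedness, item (ii) and item (iii) is correct and matches the paper. Item (iv), however, is not proved, and it is the only place where \eqref{eq:initial-q0} enters. The tightness you invoke for item (i) also rests on it.

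You dominate $\ip{\mu^k_t}{\br{\cdot}^{q_0}}$ by the optional projection of $S=\sup_{s\le\tau_k}M^k_s\br{X_s}^{q_0}$. You therefore need $\E^{\Q}S<\infty$, or at least finiteness of that projection, and none of your routes gives this.
\begin{itemize}
\item The It\^o/BDG absorption leaves $\E^{\Pp}\int_0^{\tau_k}\br{X_s}^{q_0}|H_s|^2\dd s$. As you observe, this needs a moment of order $q_0+2$ of $\nu_0$, which is not assumed.
\item The final fallback fails too. Estimate \eqref{eq:M-H1} only places $\sup_sM^k_s$ in $L^1(\Q)$, via $L\log L$. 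Multiplying it by $\mathcal X:=\sup_{s\le\tau_k}\br{X_s}^{q_0}$ would require either exponential moments of $\mathcal X$ (Young's inequality for the pair $z\log z$, $e^y$) or $\sup_sM^k_s\in L^p(\Q)$ for some $p>1$ (H\"older). The latter is Novikov-type integrability of $H$, which is unavailable here.
\end{itemize}
Polynomial moments of $X$ close neither estimate.

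The missing idea is to move the density to the terminal time \emph{before} dominating, so that no running supremum of $M^k$ ever appears. Three facts combine here: $M^k$ is a $\Q$-martingale closed by $M^k_{\tau_k}$; $\br{X_{t\wedge\tau_k}}^{q_0}$ is $\F_{t\wedge\tau_k}$-measurable; and $\mathcal G^k_t=\FY_{t\wedge\tau_k}\subset\F_{t\wedge\tau_k}$. The tower property then gives
\[
 \E^{\Q}\bigl[M^k_{t\wedge\tau_k}\br{X_{t\wedge\tau_k}}^{q_0}\bigm|\mathcal G^k_t\bigr]
 =\E^{\Q}\bigl[M^k_{\tau_k}\br{X_{t\wedge\tau_k}}^{q_0}\bigm|\mathcal G^k_t\bigr]
 \le\E^{\Q}\bigl[M^k_{\tau_k}\mathcal X\bigm|\mathcal G^k_t\bigr]=:\mathcal Z_t .
\]
Moreover $\E^{\Q}[M^k_{\tau_k}\mathcal X]=\E^{\Pp}\mathcal X<\infty$, by \Cref{lem:signal-moments} under $\Pp$ with $q=q_0$ together with \eqref{eq:initial-q0}. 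This is exactly the terminal-time change of measure you set aside: it works for the single variable $M^k_{\tau_k}\mathcal X$, just not for $S$.

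Hence $\mathcal Z$ is a closed nonnegative martingale whose c\`adl\`ag paths are bounded on $[0,T]$. Extend the identity to the unbounded weight by monotone convergence, as you propose, and then apply the optional section theorem. This gives $\sup_t\ip{\mu^k_t}{\br{\cdot}^{q_0}}\le\sup_t\mathcal Z_t<\infty$ a.s. So (iv) holds even with $\eta_N\equiv\tau_k$, and you obtain the pathwise tightness that your narrow-continuity argument for (i) requires. This is the paper's proof.
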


\begin{proof}
We first construct the narrowly continuous version.  Choose a countable set
$\{\varphi_j:j\ge1\}\subset C_c^2(\R^d)$ that is convergence determining for finite measures and includes a sequence of cutoffs increasing to $1$.  For every $j$, the Zakai equation supplies a continuous version of
$t\mapsto\langle\mu_t^k,\varphi_j\rangle$.  Intersect the corresponding countably many full-probability events.

Let
\[
 \mathcal X:=\sup_{0\le s\le\tau_k}\br{X_s}^{q_0}.
\]
The stopped SDE moment estimate under $\Pp$ and \eqref{eq:initial-q0} give
$\E^{\Pp}\mathcal X<\infty$.  Since $M_{\tau_k}^k$ is the density of $\Pp$ with respect to $\Q$ on $\F_{\tau_k}$, define the finite closed $\mathbb F^Y$-martingale
\[
 \mathcal Z_t:=\E^\Q[M_{\tau_k}^k\mathcal X\mid\FY_{t\wedge\tau_k}].
\]

Let $w(x):=\br{x}^{q_0}$ and let ${}^{o,Y}$ denote optional projection
onto the stopped observation filtration
$\bigl(\FY_{t\wedge\tau_k}\bigr)_{0\le t\le T}$.  Applied to the
nonnegative measurable processes
\[
 U_t:=M_{\tau_k}^k w(X_{t\wedge\tau_k}),
 \qquad
 \overline U_t:=M_{\tau_k}^k\mathcal X,
\]
the optional projections exist because $0\le U\le\overline U$ and
$\E^\Q[\overline U_t]=\E^\Pp[\mathcal X]<\infty$.  The defining
conditional-expectation identity for $\mu^k$, first applied to $w\wedge r$
and then followed by monotone convergence, shows that for every
$\mathbb F^Y$-stopping time $\sigma\le\tau_k$,
\[
 {}^{o,Y}U_\sigma=\langle\mu_\sigma^k,w\rangle
 \quad\Q\text{-a.s.}
\]
We choose this optional version of the $w$-pairing.  Similarly,
${}^{o,Y}\overline U=\mathcal Z$.  Monotonicity of optional projection,
followed by the optional section theorem, therefore gives, outside one
evanescent set,
\begin{equation}
 \langle\mu_t^k,w\rangle
 ={}^{o,Y}U_t
 \le{}^{o,Y}\overline U_t
 =\mathcal Z_t,
 \qquad 0\le t\le T.
 \label{eq:actual-filter-moment-bound}
\end{equation}
In particular,
\[
 \sup_{0\le t\le T}\langle\mu_t^k,w\rangle
 \le \sup_{0\le t\le T}\mathcal Z_t<\infty
 \qquad\Q\text{-a.s.},
\]
because every c\`adl\`ag path of the finite closed martingale $\mathcal Z$
is bounded on the compact time interval.  Hence the family
$\{\mu_t^k:0\le t\le\tau_k\}$ is pathwise tight.

If $t_n\to t$, continuity of all pairings with $\varphi_j$ and tightness show that every subsequence of $\mu_{t_n}^k$ has a narrowly convergent subsubsequence; the convergence-determining family identifies its only possible limit as $\mu_t^k$.  Thus the selected version is narrowly continuous.

Because $M^k$ is a closed $\Q$-martingale,
\begin{equation*}
 \ip{\mu_t^k}{1}
 =\E^\Q[M_{\tau_k}^k\mid\FY_{t\wedge\tau_k}].
 \label{eq:mass-closed}
\end{equation*}
The mass process is therefore a closed nonnegative martingale and is of class $D$.  Finally, \eqref{eq:actual-filter-moment-bound} gives the  $q_0$-moment bound required in \eqref{eq:local-q0-moment}.  Hence all four admissibility conditions hold.
\end{proof}

\subsection{A finite-rank stochastic product rule}

We first isolate the elementary product formula from which the measure-valued identity is obtained. Recall that the weighted Sobolev space
$H^n_\ell(K_1;K_2)$ is defined in \cite{XXY} and in the appendix.

\begin{lemma}[Finite-rank random test fields]
\label{lem:finite-rank-product}
Let $\rho\le\tau_k$ be a stopping time and suppose first that
\begin{equation*}
 \psi_t(x)=\sum_{j=1}^N\zeta_t^j\phi_j(x),
 \qquad \phi_j\in C_c^2(\R^d),
 \label{eq:finite-rank-field}
\end{equation*}
where
\begin{equation*}
 \dd\zeta_t^j=A_t^j\dd t+\sum_{a=1}^mB_t^{j,a}\dd Y_t^a.
 \label{eq:zeta-semimart}
\end{equation*}
Put $A_t=\sum_jA_t^j\phi_j$ and $B_t^a=\sum_jB_t^{j,a}\phi_j$.  If all terms are integrable after stopping at $\rho$, then
\begin{align}
 \ip{\mu_{t\wedge\rho}}{\psi_{t\wedge\rho}}
 ={}&\ip{\mu_0}{\psi_0}
 +\int_0^{t\wedge\rho}
    \ip{\mu_s}{L_s\psi_s+A_s+h(\cdot,u_s)^\top B_s}\dd s
 \notag\\
 &+\int_0^{t\wedge\rho}
    \ip{\mu_s}{B_s+\psi_sh(\cdot,u_s)}^\top\dd Y_s.
 \label{eq:finite-rank-product-formula}
\end{align}

The same formula holds in the following precise closure class.  Let $K\Subset\R^d$ be a bounded smooth domain and choose $s_0>d/2+2$.  Assume that all fields are supported in $K$ and
\begin{align}
 &\psi\in L^2\bigl(\Omega;C([0,\rho];H_0^{s_0}(K))\bigr),\notag\\
 &A\in L^2\bigl(\Omega;L^1(0,\rho;H_0^{s_0}(K))\bigr),\qquad
 B\in L^2_{\mathcal P}\bigl(\Omega\times(0,\rho);H_0^{s_0}(K;\R^m)\bigr),
 \label{eq:Sobolev-field-hypotheses}
\end{align}
and that the linked identity
\begin{equation}
 \psi_t=\psi_0+\int_0^tA_s\dd s+\int_0^tB_s^\top\dd Y_s
 \quad\text{holds in }H_0^{s_0}(K).
 \label{eq:linked-Sobolev-decomp}
\end{equation}
Then \eqref{eq:finite-rank-product-formula} holds with these $\psi,A,B$ after the natural coefficient and measure localizations.
\end{lemma}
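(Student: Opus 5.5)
The plan has two stages: first the finite-rank case, proved by the ordinary semimartingale product rule applied to scalar processes, and then the Sobolev closure class, obtained by approximation.

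\emph{Finite-rank case.} For each $j$, condition (ii) of \Cref{def:admissible-zakai} gives the scalar Zakai semimartingale decomposition
\[
 N_t^j:=\ip{\mu_t}{\phi_j}
 =\ip{\mu_0}{\phi_j}+\int_0^{t}\ip{\mu_s}{L_s\phi_j}\dd s+\int_0^t\ip{\mu_s}{\phi_jh(\cdot,u_s)}^\top\dd Y_s
\]
on $[0,\tau_k]$. Since $\ip{\mu_t}{\psi_t}=\sum_j\zeta^j_tN^j_t$ is a finite sum, it suffices to apply It\^o's product formula to each $\zeta^jN^j$ and then sum over $j$.

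The terms are identified as follows. The term $\zeta^j\dd N^j$ contributes $\ip{\mu_s}{L_s\psi_s}\dd s+\ip{\mu_s}{\psi_sh}^\top\dd Y_s$, because $L_s$ acts only in $x$ and so $L_s\psi_s=\sum_j\zeta_s^jL_s\phi_j$. The term $N^j\dd\zeta^j$ contributes $\ip{\mu_s}{A_s}\dd s+\ip{\mu_s}{B_s}^\top\dd Y_s$. The bracket $\dd[\zeta^j,N^j]=\sum_aB^{j,a}\ip{\mu}{\phi_jh^a}\dd s$ uses $[Y^a,Y^b]=\delta_{ab}t$, which holds under $\Q$ on the stopped interval, and it produces $\ip{\mu_s}{h^\top B_s}\dd s$. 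Stopping at $\rho$ and invoking the integrability hypothesis then gives \eqref{eq:finite-rank-product-formula}.

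\emph{Sobolev closure class.} The approach is to approximate $(\psi,A,B)$ by finite-rank fields of the type just treated. Fix an orthonormal basis $\{e_j\}\subset C_c^\infty(K)$ of $H_0^{s_0}(K)$; for instance, use a $C_c^\infty(K)$-valued dense sequence and orthonormalize it. Let $P_N$ be the orthogonal projection onto the span of $e_1,\dots,e_N$, and set $\psi^N=P_N\psi$, $A^N=P_NA$, $B^N=P_NB$. Because $P_N$ is bounded and linear, applying it to \eqref{eq:linked-Sobolev-decomp} gives $\zeta^j_t=\langle\psi_t,e_j\rangle_{H^{s_0}}$ with $\dd\zeta^j=\langle A,e_j\rangle\dd t+\langle B,e_j\rangle^\top\dd Y$. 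The finite-rank formula therefore applies to $(\psi^N,A^N,B^N)$.

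To pass to the limit $N\to\infty$, the key tool is the Sobolev embedding $H_0^{s_0}(K)\hookrightarrow C^2_b$, valid since $s_0>d/2+2$. It yields
\[
 |\ip{\mu_s}{L_s f}|\le C(1+|u_s|^2)\,\ip{\mu_s}{1}\,\|f\|_{H^{s_0}}
\]
for $f$ supported in $K$. Indeed, on $K$ we have $|x|$ bounded, so $a(x,u_s)$, $b(x,u_s)$ and $h(x,u_s)$ are bounded by $C(1+|u_s|)^2$. Similarly,
\[
 |\ip{\mu_s}{f}|+|\ip{\mu_s}{fh}|\le C(1+|u_s|)\ip{\mu_s}{1}\|f\|_{H^{s_0}}.
\]
The localizations are chosen as follows. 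Stop at $\rho_M:=\rho\wedge\inf\{t:\ip{\mu_t}{1}\ge M\}$; this is legitimate by narrow continuity, and $\rho_M\uparrow\rho$ because the mass paths are continuous. On $[0,\rho_M]$, the control energy is at most $k$.

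Under these bounds the convergences go through term by term. The $\dd s$-integrals converge in $L^1(\Omega)$ by dominated convergence, using $\|P_Nf-f\|_{H^{s_0}}\to0$ pointwise with $\|P_N\|\le1$. For instance, the $L_s\psi$ term is bounded by $M\int_0^{\rho_M}(1+|u_s|^2)\|\psi_s-\psi^N_s\|\dd s$, which is at most $M(T+k)\sup_s\|\psi_s-\psi_s^N\|$, and this tends to zero in $L^2$ by dominated convergence in $C([0,\rho];H_0^{s_0})$. The $A$ term is handled in the same way.

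The stochastic integrals converge in $L^2(\Q)$ by the It\^o isometry. The $B$ integrand is bounded by $M\|B_s-B^N_s\|$, which tends to zero in $L^2_{\mathcal P}$. The $\psi h$ integrand is bounded by $M(1+|u_s|)\|\psi_s-\psi^N_s\|$, whose square integral is at most $M^2(T+k)\sup_s\|\cdot\|^2$. The $h^\top B$ drift term is controlled by Cauchy--Schwarz, $\int(1+|u|)\|B-B^N\|\le (T+k)^{1/2}\|B-B^N\|_{L^2}$. The left side converges pointwise as well.

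Letting $N\to\infty$ and then $M\to\infty$ gives the formula at $t\wedge\rho$. The main obstacle is ensuring that the basis lies in $C_c^2$ while remaining complete in $H_0^{s_0}(K)$, together with the measurability and predictability of $\langle B,e_j\rangle$. The first point is immediate from the density of $C_c^\infty(K)$ in $H_0^{s_0}(K)$. The second follows because $\langle\cdot,e_j\rangle$ is a continuous linear functional, hence preserves predictability.
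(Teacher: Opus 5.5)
Your proposal is correct and follows the paper's proof essentially step for step. The finite-rank case is scalar integration by parts on $\zeta^j\langle\mu,\phi_j\rangle$; the closure step uses a Gram--Schmidt basis of $H_0^{s_0}(K)$ drawn from $C_c^\infty(K)$, projections $P_N$ that preserve the linked decomposition, the Sobolev embedding into $C^2$, a mass localization, and a limit passage by dominated convergence and It\^o's isometry. The paper also puts the control energy into its localizer, which is redundant because, as you note, it is already bounded by $k$ on $[0,\tau_k]$. The one point you leave implicit is why $\sup_{s\le\rho}\|(I-P_N)\psi_s\|_{H^{s_0}}\to0$ almost surely: this needs the range of the continuous path to be compact, so that the strongly convergent, norm-bounded $P_N$ converge uniformly on it, as the paper states explicitly.
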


\begin{proof}
For a finite-rank field, apply scalar integration by parts to
$\zeta_t^j\langle\mu_t,\phi_j\rangle$ and sum over $j$.  The quadratic covariation between
$\sum_aB_t^{j,a}\dd Y_t^a$ and
$\sum_a\langle\mu_t,\phi_jh^a\rangle\dd Y_t^a$ equals
$\sum_aB_t^{j,a}\langle\mu_t,\phi_jh^a\rangle\dd t$.  Collecting the drift and stochastic terms gives \eqref{eq:finite-rank-product-formula}.

For the extension, take a countable dense subset of $C_c^\infty(K)$ in $H_0^{s_0}(K)$ and apply Gram--Schmidt.  The resulting orthonormal basis $(e_j)_{j\ge1}$ still consists of functions in $C_c^\infty(K)$.  Let $P_n$ be the orthogonal projection onto $\operatorname{span}\{e_1,\ldots,e_n\}$.  Taking $H_0^{s_0}$ inner products in \eqref{eq:linked-Sobolev-decomp} shows that
\[
 \zeta_t^j:=(\psi_t,e_j)_{H^{s_0}}
\]
satisfies
\[
 \dd\zeta_t^j=(A_t,e_j)_{H^{s_0}}\dd t
 +\sum_{a=1}^m(B_t^a,e_j)_{H^{s_0}}\dd Y_t^a.
\]
Thus $P_n\psi,P_nA,P_nB$ preserve the linked semimartingale decomposition.

For almost every $\omega$, the range of the continuous path $t\mapsto\psi_t(\omega)$ is compact in $H_0^{s_0}(K)$, and $P_n$ converges uniformly on compact subsets.  Hence
\begin{align*}
 P_n\psi&\to\psi &&\text{in }L^2(\Omega;C_tH^{s_0}),\\
 P_nA&\to A &&\text{in }L^2(\Omega;L_t^1H^{s_0}),\\
 P_nB&\to B &&\text{in }L^2_{\mathcal P}(\Omega\times(0,\rho);H^{s_0}),
\end{align*}
where $C_tH^{s_0}=C([0,\rho],H^{s_0}_0(K))$ and $L_t^1H^{s_0}=L^1(0,\rho,H^{s_0}_0(K))$.
Predictability is preserved by deterministic projection.  Since $s_0>d/2+2$, Sobolev embedding converts these convergences to the $C_x^2$, $L_t^1C_x^2$, and $L_t^2C_x^2$ convergences needed on $K$.

Write
\[
 \delta\psi^n:=P_n\psi-\psi,
 \qquad \delta A^n:=P_nA-A,
 \qquad \delta B^n:=P_nB-B.
\]
For $\ell\in\N$, put $c_\ell:=\ell+\langle\mu_0,1\rangle$ and define
the explicit common localization
\begin{equation*}
 \vartheta_\ell
 :=\rho\wedge\inf\left\{t\ge0:
     \langle\mu_t,1\rangle
     +\int_0^{t\wedge\rho}(1+|u_s|^2)\dd s
     \ge c_\ell\right\}.
 \label{eq:finite-rank-closure-localizer}
\end{equation*}
Narrow continuity of $\mu$, continuity of the time integral, and the stopped
control-energy bound imply $\vartheta_\ell\uparrow\rho$ a.s..  On
$[0,\vartheta_\ell]$ the mass is bounded by $c_\ell$, and, since all fields
are supported in $K$,
\[
 \sup_{x\in K}\bigl(|b(x,u_s)|+|a(x,u_s)|+|h(x,u_s)|^2\bigr)
 \le C_K(1+|u_s|^2).
\]
Sobolev embedding and the three convergences above give
\begin{equation}
 \E^\Q\sup_{t\le\vartheta_\ell}
   \left|\langle\mu_t,\delta\psi_t^n\rangle\right|^2
 \le C_{K,\ell}\E^\Q\sup_{t\le\rho}
       \|\delta\psi_t^n\|_{H^{s_0}(K)}^2
 \longrightarrow0,
 \label{eq:finite-rank-closure-boundary}
 \end{equation}
 \begin{equation}
 \E^\Q\int_0^{\vartheta_\ell}
  \left|\left\langle\mu_s,
       L_s\delta\psi_s^n+\delta A_s^n+h(\cdot,u_s)^\top\delta B_s^n
       \right\rangle\right|\dd s
 \longrightarrow0,
 \label{eq:finite-rank-closure-drift}
 \end{equation}
 and
 \begin{eqnarray}
 \E^\Q\int_0^{\vartheta_\ell}
  \left|\left\langle\mu_s,
       \delta B_s^n+\delta\psi_s^n h(\cdot,u_s)
       \right\rangle\right|^2\dd s
 &\le& C_{K,\ell}
     \E^\Q\int_0^\rho\|\delta B_s^n\|_{H^{s_0}(K)}^2\dd s\notag\\
 &&+C_{K,\ell}
     \E^\Q\sup_{s\le\rho}\|\delta\psi_s^n\|_{H^{s_0}(K)}^2
 \longrightarrow0.
 \label{eq:finite-rank-closure-martingale}
\end{eqnarray}
For \eqref{eq:finite-rank-closure-drift}, use the
$L^2(\Omega;L_t^1H^{s_0})$ convergence for $\delta A^n$, the
uniform-in-time convergence for $\delta\psi^n$, and Cauchy--Schwarz in time
for the term containing $\delta B^n$.  The endpoint estimate
\eqref{eq:finite-rank-closure-boundary} also applies at
$t\wedge\vartheta_\ell$, including the crossing time, because the mass
process is continuous.

Apply the finite-rank identity to $(P_n\psi,P_nA,P_nB)$ stopped at
$\vartheta_\ell$.  Equations
\eqref{eq:finite-rank-closure-boundary}--
\eqref{eq:finite-rank-closure-martingale}, together with It\^o's isometry,
allow $n\to\infty$ and give \eqref{eq:finite-rank-product-formula} stopped
at $\vartheta_\ell$.  Since $\vartheta_\ell\uparrow\rho$, these identities
are consistent and prove the asserted localized formula on $[0,\rho]$.  If
the terms in \eqref{eq:finite-rank-product-formula} are globally integrable
up to $\rho$, dominated convergence for the drift and BDG for the stochastic
term also permit $\ell\to\infty$ directly.

\end{proof}

\subsection{The measure-valued It\^o product formula}

Let $(f,g)$ solve the BSPDE \eqref{eq:main-bspde} on an $\mathbb F^Y$-stopping time
$\theta\le\tau_k$.  For later estimates, set
\begin{eqnarray*}
 K_s^f&:=&\max_{|\alpha|\le2}
       \sup_{x\in\R^d}\frac{|D^\alpha f_s(x)|}{\br{x}^{\lambda}},
 \label{eq:Kf}\\
 K_s^g&:=&\sup_{x\in\R^d}
       \frac{|g_s(x)|}{\br{x}^{\lambda}}.
 \label{eq:Kg}
\end{eqnarray*}
By \Cref{thm:main-bspde},
\begin{equation}
 \E^\Q\esssup_{s\le\theta}|K_s^f|^2
 +\E^\Q\int_0^\theta|K_s^g|^2\dd s<\infty.
 \label{eq:Kfg-integrability}
\end{equation}

\begin{lemma}[Convergence of random terminal pairings]
\label{lem:terminal-pairing-convergence}
Let $\mu$ be an admissible Zakai solution and let $\sigma_j\uparrow\theta$ be stopping times.  Then
\begin{equation}
 \ip{\mu_{\sigma_j}}{f_{\sigma_j}}
 \longrightarrow \ip{\mu_\theta}{\gamma}
 \quad\text{in probability}.
 \label{eq:terminal-pairing-convergence}
\end{equation}
If the mass process is of class $D$, the family on the left is uniformly integrable and the convergence is in $L^1$.
\end{lemma}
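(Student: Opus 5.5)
We split the difference $\ip{\mu_{\sigma_j}}{f_{\sigma_j}}-\ip{\mu_\theta}{\gamma}$ into two parts,
\[
 \ip{\mu_{\sigma_j}}{f_{\sigma_j}-\gamma}
 +\bigl(\ip{\mu_{\sigma_j}}{\gamma}-\ip{\mu_\theta}{\gamma}\bigr),
\]
where $\gamma=f_\theta$ is the terminal value of the BSPDE \eqref{eq:main-bspde}. From \Cref{thm:main-bspde} we take the properties that the terminal datum $\gamma$ has growth at most $\br{x}^{\lambda}$, with $\sup_x|\gamma(x)|\br{x}^{-\lambda}$ square integrable, and that $f$ has a version continuous in time with values in a weighted space. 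Concretely, we assume $\sup_x|f_s(x)-\gamma(x)|\br{x}^{-\lambda'}\to0$ as $s\uparrow\theta$ for some weight exponent $\lambda'<q_0$; if needed we take $\lambda'$ slightly above $\lambda$, which is allowed because $q_0=d+4>\lambda=d+3$.

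\emph{First term.} Work on the event where $\eta_N\ge\theta$, or more precisely localize with the moment stopping times $\eta_N$ of \eqref{eq:local-q0-moment}. On that event,
\[
 |\ip{\mu_{\sigma_j}}{f_{\sigma_j}-\gamma}|
 \le \sup_{s\le\eta_N}\ip{\mu_s}{\br{\cdot}^{q_0}}\;
     \sup_x\frac{|f_{\sigma_j}(x)-\gamma(x)|}{\br{x}^{\lambda'}},
\]
and the right side tends to $0$ almost surely. Since $\Q(\eta_N<\theta)\to0$ after adjusting with $\eta_N\uparrow\tau_k\ge\theta$ (using $\eta_N\wedge\theta$ and a diagonal argument), the first term converges to $0$ in probability.

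\emph{Second term.} Narrow continuity of $\mu$ gives $\mu_{\sigma_j}\to\mu_\theta$ narrowly. On the localized event the $q_0$-moments are uniformly bounded, and $\gamma$ is continuous with growth $\br{x}^{\lambda}$ where $\lambda<q_0$, so $\gamma$ is uniformly integrable against $\{\mu_{\sigma_j}\}$. Truncating $\gamma$ by $\chi_R\gamma$ and bounding the tail by $R^{\lambda-q_0}\sup_s\ip{\mu_s}{\br{\cdot}^{q_0}}$ then gives pathwise convergence on that event. Continuity of $\gamma$ in $x$ comes from the $C^2$-weighted control on $f$ encoded in $K^f$. This proves \eqref{eq:terminal-pairing-convergence}.

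\emph{Uniform integrability under class $D$.} This step is the main obstacle. The bound $|\ip{\mu_{\sigma_j}}{f_{\sigma_j}}|\le K^f_{\sigma_j}\ip{\mu_{\sigma_j}}{\br{\cdot}^{\lambda}}$ involves a polynomial moment, not the mass, so class $D$ of the mass alone does not immediately control it. We would try to exploit the forward Zakai dynamics as a supermartingale-type bound. Apply the finite-rank product formula (\Cref{lem:finite-rank-product}, extended by cutoff and mollification) to the pair $(\mu,f)$ itself. This writes $\ip{\mu_{\sigma_j}}{f_{\sigma_j}}$ as $\ip{\mu_0}{f_0}$ plus a local martingale with integrand $\ip{\mu_s}{g_s+f_sh}$, so that the family is $\E^{\Q}[\,\cdot\mid\cdot\,]$-dominated by a closed quantity.

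If this route fails, the fallback uses the structure $f=\E[\gamma\cdot\text{(likelihood)}\mid\cdot]$ of the dual. Take $\gamma$ bounded, which is the uniqueness application, since there $\gamma=\pm\varphi$ with $\varphi$ bounded. Then one expects $\sup_x|f_s(x)|\le\|\gamma\|_\infty\cdot$(a bounded factor), so that $|\ip{\mu_{\sigma_j}}{f_{\sigma_j}}|\le C\ip{\mu_{\sigma_j}}{1}$. Class $D$ of the mass then gives uniform integrability directly, and convergence in probability upgrades to $L^1$ convergence by Vitali.
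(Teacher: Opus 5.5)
There is a genuine gap in your proof of convergence in probability. Both of your terms are controlled by $q_0$-moments of $\mu$ up to the terminal time, and at the terminal time itself for $\ip{\mu_\theta}{\gamma}$. Admissibility does not provide these moments. Condition (iv) of \Cref{def:admissible-zakai} only gives $\eta_N\uparrow\tau_k$ with $\sup_{s\le\eta_N}\ip{\mu_s}{\br{\cdot}^{q_0}}<\infty$, and this convergence need not be stationary. So on $\{\theta=\tau_k\}$ one can have $\eta_N<\theta$ for all $N$ and $\sup_{s<\theta}\ip{\mu_s}{\br{\cdot}^{q_0}}=\infty$; this is precisely the case $\theta=t\wedge\tau_k$ in the uniqueness proof. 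Hence $\Q(\eta_N<\theta)\to0$ is not available, and no diagonal argument produces it. The paper deliberately avoids endpoint moments: in the proof of \Cref{thm:measure-product} the lemma is applied along $\alpha_n\uparrow\theta$, where $\ip{\mu_{\alpha_n}}{\br{\cdot}^{q_0}}$ may be infinite.

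The missing idea is that the terminal field is bounded. A field $\gamma\in\mathscr G_\theta^{n_*+1}$ lies in $C_b^{n_*+1}$ with an essential-sup bound; it is not merely of growth $\br{x}^\lambda$. Moreover, \eqref{eq:f-bounded} gives $|f_s(x)|\le\norm{\gamma}_{L^\infty(\Omega\times\R^d)}$ for all $s\le\theta$. With this you can argue pathwise:
\begin{itemize}
\item The narrowly continuous image $\{\mu_s:s\le\theta\}$ is compact, hence has bounded mass and is uniformly tight.
\item $f_{\sigma_j}\to\gamma$ uniformly on each ball, by the $C(\overline B_R)$-valued time continuity of \Cref{prop:local-time-continuity}.
\item Split over $B_R$ and $B_R^c$, using $|f|\le\norm{\gamma}_\infty$ on $B_R^c$ and narrow convergence for $\ip{\mu_{\sigma_j}-\mu_\theta}{\gamma}$.
\end{itemize}
This gives almost sure convergence with no moment condition at all.

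The same bound yields $|\ip{\mu_{\sigma_j}}{f_{\sigma_j}}|\le\norm{\gamma}_\infty\ip{\mu_{\sigma_j}}{1}$. Class $D$ then gives uniform integrability, and Vitali gives $L^1$ convergence. Your fallback is exactly this argument, and it is valid for every admissible terminal field, not only $\pm\varphi$. Discard your primary uniform-integrability route. The product formula only writes the pairing as $\ip{\mu_0}{f_0}$ plus a local martingale. Its closedness is precisely what this lemma is used to establish in Step 3 of \Cref{thm:measure-product}, so that route presupposes the uniform integrability it is meant to prove.
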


\begin{proof}

Work on a common full-probability event on which the paths of $\mu$
are narrowly continuous.
Then $\mu_{\sigma_j}\to\mu_\theta$ narrowly and
$f_{\sigma_j}\to f_\theta=\gamma$ uniformly on every compact subset
of $\R^d$.  In particular,
$\{\mu_{\sigma_j}:j\ge1\}\cup\{\mu_\theta\}$ has uniformly bounded
masses and is uniformly tight.

Put $C_\gamma:=\norm{\gamma}_{L^\infty(\Omega\times\R^d)}$.  For a
fixed sample point and $\varepsilon>0$, choose $R$ such that
\[
 \sup_{j\ge1}\mu_{\sigma_j}(B_R^c)+\mu_\theta(B_R^c)<\varepsilon.
\]
Since $|f_s|\le C_\gamma$ by \eqref{eq:f-bounded},
\begin{align*}
 \left|\ip{\mu_{\sigma_j}}{f_{\sigma_j}}
          -\ip{\mu_\theta}{\gamma}\right|
 &\le \mu_{\sigma_j}(B_R)
       \sup_{x\in B_R}|f_{\sigma_j}(x)-\gamma(x)|\\
 &\quad+2C_\gamma\mu_{\sigma_j}(B_R^c)
      +\left|\ip{\mu_{\sigma_j}-\mu_\theta}{\gamma}\right|.
\end{align*}
The first and third terms tend to zero because
$\gamma\in C_b(\R^d)$, and the limsup of the second is at most
$2C_\gamma\varepsilon$.  Letting $\varepsilon\downarrow0$ proves
\eqref{eq:terminal-pairing-convergence} almost surely, without using
a $q_0$-moment at the endpoint.

Finally,
$|\ip{\mu_{\sigma_j}}{f_{\sigma_j}}|
\le C_\gamma\ip{\mu_{\sigma_j}}1$.
The mass class-$D$ property gives uniform integrability; the same
bound at $\theta$ makes the limit integrable.  Vitali's theorem
therefore gives convergence in $L^1$.

\end{proof}

Since we are going to use the solution of the BSPDE stated in the Appendix, we  shall 
make the strngthen version \Cref{ass:bspde-regularity} of \Cref{ass:filtering}
as a standing hypothesis in this and the next sections.

\begin{theorem}[Measure-valued stochastic product formula]
\label{thm:measure-product}
Suppose \Cref{ass:bspde-regularity} holds.
Let $\mu$ be an admissible Zakai solution on $[0,\theta]$ and let $(f,g)$ solve \eqref{eq:main-bspde} with terminal field
$\gamma\in\mathscr G_\theta^{n_*+1}$.  Then, for every $t\in[0,T]$,
\begin{equation}
 \ip{\mu_{t\wedge\theta}}{f_{t\wedge\theta}}
 =\ip{\mu_0}{f_0}
 +\int_0^{t\wedge\theta}
   \ip{\mu_s}{g_s+f_sh(\cdot,u_s)}^\top\dd Y_s.
 \label{eq:measure-product}
\end{equation}
Moreover,
\begin{equation}
 \E^\Q\ip{\mu_\theta}{\gamma}=\E^\Q\ip{\mu_0}{f_0}.
 \label{eq:duality-expectation}
\end{equation}
\end{theorem}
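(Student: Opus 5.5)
The plan is to reduce \eqref{eq:measure-product} to the closure class of \Cref{lem:finite-rank-product} by spatial cutoff and mollification, and then remove both regularizations. I take the BSPDE \eqref{eq:main-bspde} in the form
\[
 -\dd f_s=\bigl(L_sf_s+h(\cdot,u_s)^\top g_s\bigr)\dd s-g_s^\top\dd Y_s,\qquad f_\theta=\gamma .
\]
Formally this gives $A=-(Lf+h^\top g)$ and $B=g$ in \eqref{eq:finite-rank-product-formula}. The drift $L\psi+A+h^\top B$ then vanishes, and the martingale integrand is $\ip{\mu_s}{g_s+f_sh(\cdot,u_s)}$, which is exactly \eqref{eq:measure-product}.

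\textbf{Regularization and the finite-rank formula.} Fix a smooth cutoff $\chi_R(x)=\chi(x/R)$ with $\chi=1$ on $B_1$ and $\supp\chi\subset B_2$, and a spatial mollifier $\varrho_\varepsilon$. Set $\psi^{R,\varepsilon}:=\chi_R(\varrho_\varepsilon*f)$, $B^{R,\varepsilon}:=\chi_R(\varrho_\varepsilon*g)$, and $A^{R,\varepsilon}:=-\chi_R\,\varrho_\varepsilon*(Lf+h^\top g)$. Convolution and multiplication by $\chi_R$ commute with the $\dd s$ and $\dd Y$ integrals. Hence the linked identity \eqref{eq:linked-Sobolev-decomp} holds in $H^{s_0}_0(B_{2R+1})$, with the Sobolev regularity supplied by \Cref{thm:main-bspde} after mollification. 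The integrability requirements in \eqref{eq:Sobolev-field-hypotheses} follow from \eqref{eq:Kfg-integrability}, after an additional localization that bounds $\int_0^{\cdot}|u_s|^2\dd s$ and the $L^2$ norms of $K^f,K^g$.

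\Cref{lem:finite-rank-product} then gives \eqref{eq:finite-rank-product-formula} with these fields. Its drift no longer vanishes; it equals
\[
 \ip{\mu_s}{\,[L_s,\chi_R\varrho_\varepsilon*]f_s+h^\top\chi_R(\varrho_\varepsilon*g_s)-\chi_R\varrho_\varepsilon*(h^\top g_s)\,}.
\]
The formula holds up to a localizing time $\vartheta$. I intersect $\vartheta$ with the moment times $\eta_N$ of \eqref{eq:local-q0-moment}.

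\textbf{Passing to the limit.} First let $\varepsilon\downarrow0$ at fixed $R$. On $B_{2R}$ the coefficients are bounded by $C_R(1+|u_s|)^2$. The mollifier commutators $[b^iD_i,\varrho_\varepsilon*]$ and $[a^{ij}D_{ij},\varrho_\varepsilon*]$ tend to zero locally uniformly by the Lipschitz bound \eqref{eq:filtering-lipschitz} and the $C^2$ bounds behind $K^f$. The same holds for $h$, using the extra regularity of $h$ in \Cref{ass:bspde-regularity}. On $B_{2R}$ the mass is bounded, so dominated convergence removes $\varepsilon$.

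The term that remains is the large-space commutator
\[
 L_s(\chi_Rf_s)-\chi_RL_sf_s=\tfrac12 a^{ij}\bigl(2D_i\chi_RD_jf_s+f_sD_{ij}\chi_R\bigr)+b^iD_i\chi_R\,f_s .
\]
It is supported in $R\le|x|\le2R$. Since $|D\chi_R|\lesssim R^{-1}$ and $|a|\lesssim\br{x}^2(1+|u_s|)^2$, it is bounded by $C(1+|u_s|^2)K^f_s\br{x}^{\lambda+1}\one_{\{|x|\ge R\}}$. On $[0,\eta_N]$, with $q_0\ge\lambda+1$, the time integral is at most
\[
 C\sup_{s\le\eta_N}\ip{\mu_s}{\br{\cdot}^{q_0}\one_{\{|\cdot|\ge R\}}}\,\esssup_s K^f_s\int_0^{\tau_k}(1+|u_s|^2)\dd s .
\]
This tends to $0$ a.s. as $R\to\infty$. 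Here I would use the strict gap $q_0=\lambda+1$ only if the tail factor already decays; otherwise I would use dominated convergence with the moment bound at order $q_0$.

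In the stochastic term, $\ip{\mu_s}{\chi_R(g_s+f_sh)}\to\ip{\mu_s}{g_s+f_sh}$ in $L^2(\dd s)$ a.s. on $[0,\eta_N]$. This uses the weights $\br{x}^{\lambda+1}\le\br{x}^{q_0}$ together with $K^g\in L^2$. The stochastic integrals therefore converge in probability. The boundary terms converge by the same domination. Letting $N\to\infty$ and then letting the localizer $\sigma_j\uparrow\theta$, \Cref{lem:terminal-pairing-convergence} identifies the endpoint value $\ip{\mu_\theta}{\gamma}$ and yields \eqref{eq:measure-product} for all $t$.

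\textbf{The expectation identity.} For \eqref{eq:duality-expectation}, the local martingale on the right of \eqref{eq:measure-product} is reduced by $\sigma_j\uparrow\theta$, which gives $\E^\Q\ip{\mu_{\sigma_j}}{f_{\sigma_j}}=\E^\Q\ip{\mu_0}{f_0}$. The class-$D$ mass condition and the $L^1$ part of \Cref{lem:terminal-pairing-convergence} then allow $j\to\infty$.

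\textbf{Main obstacle.} I expect the hardest step to be the large-space commutator: controlling $\br{x}^{\lambda+1}$ growth against a measure that has only a local $q_0$-moment, when the weight $K^f$ is only essentially bounded in time with an $L^2(\Q)$ bound. Combining the pathwise energy localization, the $\eta_N$ moment localization, and the $K^f$ localization into one sequence of stopping times increasing to $\theta$ is where the care goes.
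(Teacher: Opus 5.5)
Your proposal follows essentially the paper's proof. It feeds the same regularized fields into \Cref{lem:finite-rank-product}, takes $\varepsilon\downarrow0$ before $R\uparrow\infty$, absorbs the $\br{x}^{\lambda+1}$ annulus commutator into the $q_0$-moment by dominated convergence in $s$, and removes the localization through \Cref{lem:terminal-pairing-convergence} and the class-$D$ mass. Your dominated-convergence fallback is the right argument there, because the uniform-in-time tail $\sup_{s\le\eta_N}\ip{\mu_s}{\br{\cdot}^{q_0}\one_{\{|\cdot|\ge R\}}}$ need not vanish. The only real difference is that the paper adjoins $\beta_n$ to $\eta_n$ to get a deterministic moment bound $c_n$ and argues in $L^1(\Q)$, whereas you argue pathwise with the random bound on $[0,\eta_N]$, which works equally well.

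One step should be made explicit, as the paper does. When you let $\sigma_j\uparrow\theta$, the a.s. convergence of the left side forces $\int_0^\theta\abs{\ip{\mu_s}{g_s+f_sh(\cdot,u_s)}}^2\dd s<\infty$ by the convergence theorem for continuous local martingales. The moment bounds hold only up to the $\eta_N$, so without this step the stochastic integral in \eqref{eq:measure-product} is not yet defined at $\theta$.
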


\begin{proof}

Let $w(x):=\br{x}^{q_0}$ and choose the increasing stopping times
$\eta_n\uparrow\theta$ from \eqref{eq:local-q0-moment}.  Extend
$\mu$ constantly after $\theta$ and, for $r\in\N$, set
\[
 Q_t^{(r)}:=\ip{\mu_{t\wedge\theta}}{w\wedge r},
 \qquad
 Q_t:=\sup_{r\in\N}Q_t^{(r)}
       =\ip{\mu_{t\wedge\theta}}w.
\]
Each $Q^{(r)}$ is continuous and adapted by narrow continuity; hence
$Q$ is progressively measurable.  Since the initial measure is fixed,
$Q_0<\infty$ is deterministic.  Put
\begin{equation*}
 c_n:=n+Q_0,\qquad
 \beta_n:=\inf\{t\in[0,\theta]:Q_t>c_n\}\wedge\theta,
 \qquad
 \alpha_n:=\eta_n\wedge\beta_n.
 \label{eq:product-localizers}
\end{equation*}
The debut theorem for progressive sets shows that $\beta_n$ and
$\alpha_n$ are stopping times.  The sequence is increasing and
$\alpha_n\uparrow\theta$: for every $t<\theta$, admissibility bounds
$\sup_{s\le t}Q_s$ after some $\eta_N$, so eventually
$t<\alpha_n$.  Moreover,
\begin{equation}
 Q_s\le c_n\qquad\text{for every }s<\alpha_n.
 \label{eq:product-moment-before-stop}
\end{equation}
The endpoint $Q_{\alpha_n}$ need not be bounded.  The mass process
$Z_t^\mu:=\ip{\mu_{t\wedge\theta}}1$, however, is continuous and
$Z_s^\mu\le Q_s$.  Taking a limit from the left gives
\begin{equation}
 \ip{\mu_{\alpha_n}}1\le c_n.
 \label{eq:product-mass-at-stop}
\end{equation}
Thus the $q_0$-moment is used only under time integrals, while
random-time boundary pairings are controlled by the mass.

Let $\rho_\varepsilon$ be a standard nonnegative spatial mollifier
and write $f^\varepsilon=f*\rho_\varepsilon$ and
$g^\varepsilon=g*\rho_\varepsilon$.  Choose
$\chi\in C_c^\infty$ equal to one on $B_1$ and zero outside $B_2$,
and put $\chi_R(x)=\chi(x/R)$.  For fixed $\varepsilon,R$, define
\[
 \psi^{\varepsilon,R}:=\chi_Rf^\varepsilon,
 \qquad
 A^{\varepsilon,R}:=-\chi_R\bigl[(L_sf_s)^\varepsilon
                         +(h_s^\top g_s)^\varepsilon\bigr],
\]
\[
 B^{\varepsilon,R}:=\chi_Rg^\varepsilon,
 \qquad h_s(x):=h(x,u_s).
\]
Choose a bounded smooth domain $K_R$ whose interior contains
$\supp\chi_R$.  For every fixed $s_0>d/2+2$, spatial smoothing,
\Cref{thm:main-bspde}, the
coefficient bounds, and the control-energy bound give
\begin{align*}
 &\E^\Q\sup_{s\le\theta}
       \|\psi_s^{\varepsilon,R}\|_{H_0^{s_0}(K_R)}^2
 +\E^\Q\left(\int_0^\theta
       \|A_s^{\varepsilon,R}\|_{H_0^{s_0}(K_R)}\dd s\right)^2\\
 &\qquad
 +\E^\Q\int_0^\theta
       \|B_s^{\varepsilon,R}\|_{H_0^{s_0}(K_R)}^2\dd s<\infty.
\end{align*}

We justify convolution of the stochastic term at the Hilbert-space level.
Without loss of generality take $0<\varepsilon\le1$, and define the
deterministic smoothing operator
\[
 T_{\varepsilon,R}v:=\chi_R(v*\rho_\varepsilon).
\]
For fixed $(\varepsilon,R,s_0)$, convolution followed by multiplication by
$\chi_R$ is a bounded linear map from $H^{-2}(B_{2R+1})$ into
$H_0^{s_0}(K_R)$; on nonnegative Sobolev orders the same statement follows
a fortiori.  In particular, the estimates above imply
\begin{align*}
 &\E^\Q\int_0^\theta
   \|T_{\varepsilon,R}g_s\|_{H^{s_0}(K_R;\R^m)}^2\dd s<\infty,\\
 &\E^\Q\left(\int_0^\theta
   \|T_{\varepsilon,R}(L_sf_s+h_s^\top g_s)\|_{H^{s_0}(K_R)}\dd s\right)^2<\infty.
\end{align*}
Apply $T_{\varepsilon,R}$ to the distribution-valued weak BSPDE.  A
deterministic bounded linear operator commutes with Bochner integration.  It
also commutes with the Hilbert-valued It\^o integral: this is immediate for
elementary predictable integrands, and the general case follows from It\^o's
isometry and the first estimate above.  Consequently, outside one null set
and for every $t\in[0,T]$,
\begin{equation}
 \psi_{t\wedge\theta}^{\varepsilon,R}
 =\psi_0^{\varepsilon,R}
  +\int_0^{t\wedge\theta}A_s^{\varepsilon,R}\dd s
  +\int_0^{t\wedge\theta}(B_s^{\varepsilon,R})^\top\dd Y_s
 \quad\text{in }H_0^{s_0}(K_R).
 \label{eq:smoothed-linked-Hilbert-identity}
\end{equation}
Equivalently, on $[0,\theta]$,
\[
 \dd f_s^\varepsilon
 =-\bigl[(L_sf_s)^\varepsilon+(h_s^\top g_s)^\varepsilon\bigr]\dd s
 +(g_s^\varepsilon)^\top\dd Y_s
\]
after multiplication by $\chi_R$.  Thus
\eqref{eq:Sobolev-field-hypotheses}--
\eqref{eq:linked-Sobolev-decomp} hold, with the linked identity understood in
the precise Hilbert-valued sense of
\eqref{eq:smoothed-linked-Hilbert-identity}.

On $[0,\alpha_n]$,
\eqref{eq:product-moment-before-stop} controls every time-integrated
pairing and \eqref{eq:product-mass-at-stop} controls the boundary
pairing.  Lemma~\ref{lem:finite-rank-product} therefore applies and
yields the following identity:
\begin{eqnarray*}
 \ip{\mu_{t\wedge\alpha_n}}{\chi_Rf_{t\wedge\alpha_n}^\varepsilon}
 &={}&\ip{\mu_0}{\chi_Rf_0^\varepsilon}
 +\int_0^{t\wedge\alpha_n}
   \ip{\mu_s}{\chi_R(g_s^\varepsilon+f_s^\varepsilon h_s)}^\top\dd Y_s
 \notag\\
& &+\int_0^{t\wedge\alpha_n}
   \ip{\mu_s}{C_s^{\varepsilon,R}+\chi_Rr_s^\varepsilon+\chi_Rs_s^\varepsilon}\dd s,
 \label{eq:eps-R-product}
\end{eqnarray*}
where
\begin{align*}
 C_s^{\varepsilon,R}&:=L_s(\chi_Rf_s^\varepsilon)-\chi_RL_sf_s^\varepsilon,\\
 r_s^\varepsilon&:=L_sf_s^\varepsilon-(L_sf_s)^\varepsilon,\\
 s_s^\varepsilon&:=h_s^\top g_s^\varepsilon-(h_s^\top g_s)^\varepsilon.
\end{align*}

\medskip
\noindent\emph{Step 1: $\varepsilon\downarrow0$ at fixed $R,n$.}

By \Cref{prop:local-time-continuity}, the path
$s\mapsto f_s$ is continuous with values in
$H^{n_*-2}(B_{2R+1})$, and $n_*-2>d/2$.  Uniform convergence of an
approximate identity on the compact range of this path, followed by
Sobolev embedding, gives
\begin{equation}
 \sup_{s\le\theta}\sup_{x\in B_{2R}}
 |f_s^\varepsilon(x)-f_s(x)|\longrightarrow0
 \quad\Q\text{-a.s.}
 \label{eq:mollifier-C0-uniform-time}
\end{equation}
The difference is bounded by $2\norm{\gamma}_\infty$.  In particular,
for every fixed $t$,
\begin{equation*}
 \E^\Q\left|
 \ip{\mu_{t\wedge\alpha_n}}
     {\chi_R(f_{t\wedge\alpha_n}^\varepsilon
                  -f_{t\wedge\alpha_n})}
 \right|^2\longrightarrow0,
 \label{eq:mollifier-random-time-boundary}
\end{equation*}
by \eqref{eq:product-mass-at-stop}; the same holds at time zero.

For terms integrated against $\dd s$ or $\dd Y_s$,
\Cref{thm:main-bspde} and $n_*>d/2+2$ give
\begin{align}
 f^\varepsilon&\longrightarrow f
 &&\text{in }L^2(\Q\otimes\dd s;C^2(B_{2R})),
 \notag\\
 g^\varepsilon&\longrightarrow g
 &&\text{in }L^2(\Q\otimes\dd s;C^2(B_{2R};\R^m)).
 \label{eq:mollifier-local-time-integrated}
\end{align}
The coefficient
assumptions and commutator identities imply
\begin{eqnarray*}
 \sup_{x\in B_{2R}}|r_s^\varepsilon(x)|
 &\le& C_R\varepsilon(1+|u_s|)
   \sup_{x\in B_{2R+1}}(|Df_s(x)|+|D^2f_s(x)|),
 \label{eq:r-eps-bound}\\
 \sup_{x\in B_{2R}}|s_s^\varepsilon(x)|
 &\le& C\varepsilon\sup_{x\in B_{2R+1}}|g_s(x)|.
 \label{eq:s-eps-bound}
\end{eqnarray*}
For instance, the second-order commutator is the integral of
$[a^{ij}(x,u_s)-a^{ij}(x-y,u_s)]D_{ij}f_s(x-y)
\rho_\varepsilon(y)$, and on the fixed ball
$|D_xa|\le C_R(1+|u_s|)$.  By
\eqref{eq:product-moment-before-stop}, the pairings of the
right-hand sides are bounded by an integrable multiple of
$\varepsilon[(1+|u_s|)K_s^f+K_s^g]$.  Hence
\begin{equation*}
 \E^\Q\int_0^{t\wedge\alpha_n}
 \left|\ip{\mu_s}{\chi_Rr_s^\varepsilon
                         +\chi_Rs_s^\varepsilon}\right|\dd s
 \longrightarrow0.
 \label{eq:mollifier-drift-errors}
\end{equation*}
Expanding $C_s^{\varepsilon,R}$ as in
\eqref{eq:commutator-expanded}, using
\eqref{eq:mollifier-C0-uniform-time} for zeroth-order terms and
\eqref{eq:mollifier-local-time-integrated} for the first-order term,
gives
\begin{equation*}
 \E^\Q\int_0^{t\wedge\alpha_n}
 \left|\ip{\mu_s}{C_s^{\varepsilon,R}-C_s^R}\right|\dd s
 \longrightarrow0,
 \qquad
 C_s^R:=L_s(\chi_Rf_s)-\chi_RL_sf_s.
 \label{eq:mollifier-cutoff-commutator}
\end{equation*}
Here dominated convergence uses \eqref{eq:Kfg-integrability} and the
control-energy bound.  Finally,
\eqref{eq:mollifier-C0-uniform-time}--
\eqref{eq:mollifier-local-time-integrated} imply
\begin{equation*}
 \E^\Q\int_0^{t\wedge\alpha_n}
 \left|\ip{\mu_s}{\chi_R\bigl[(g_s^\varepsilon-g_s)
                   +(f_s^\varepsilon-f_s)h_s\bigr]}\right|^2\dd s
 \longrightarrow0.
 \label{eq:mollifier-stochastic-error}
\end{equation*}
It\^o's isometry and the preceding convergences yield
\begin{eqnarray*}
 \ip{\mu_{t\wedge\alpha_n}}{\chi_Rf_{t\wedge\alpha_n}}
 &={}&\ip{\mu_0}{\chi_Rf_0}
 +\int_0^{t\wedge\alpha_n}
   \ip{\mu_s}{\chi_R(g_s+f_sh_s)}^\top\dd Y_s
 \notag\\
& &+\int_0^{t\wedge\alpha_n}\ip{\mu_s}{C_s^R}\dd s.
 \label{eq:R-product}
\end{eqnarray*}

\medskip
\noindent\emph{Step 2: $R\uparrow\infty$ at fixed $n$.}
Since $a$ is symmetric,
\begin{equation}
 C_s^R=f_s b(\cdot,u_s)\cdot D\chi_R
 +\frac12f_sa(\cdot,u_s):D^2\chi_R
 +(a(\cdot,u_s)D\chi_R)\cdot Df_s.
 \label{eq:commutator-expanded}
\end{equation}
It is supported on $A_R=\{R\le|x|\le2R\}$ and
\begin{equation*}
 |C_s^R(x)|\le C\one_{A_R}(x)(1+|u_s|^2)K_s^f\br{x}^{\lambda+1}.
 \label{eq:commutator-growth}
\end{equation*}
The order $\lambda+1$ comes from
$a\sim\br{x}^2$, $D\chi_R\sim R^{-1}$, and
$Df\sim\br{x}^\lambda$.  For $s<\alpha_n$,
\eqref{eq:product-moment-before-stop} gives
\begin{align*}
 \left|\ip{\mu_s}{C_s^R}\right|
 &\le C(1+|u_s|^2)K_s^f
   \ip{\mu_s}{\one_{A_R}\br{\cdot}^{\lambda+1}}\\
 &\le Cc_n(1+|u_s|^2)K_s^f.
\end{align*}
The first line tends to zero for
$\dd\Q\otimes\dd s$-almost every $(\omega,s)$ because
$q_0\ge\lambda+1$, and the second is integrable by
\eqref{eq:Kfg-integrability} and the control-energy bound.  Thus
\begin{equation*}
 \E^\Q\left|\int_0^{t\wedge\alpha_n}
          \ip{\mu_s}{C_s^R}\dd s\right|\longrightarrow0.
 \label{eq:commutator-vanish}
\end{equation*}

For the boundary pairings, no $q_0$-moment at
$\alpha_n$ is used.  For each sample point the narrowly continuous
image $\{\mu_s:0\le s\le\alpha_n\}$ is a compact, hence uniformly
tight, family of finite measures.  Together with
\eqref{eq:f-bounded}, this gives, uniformly in $t$,
\[
 \ip{\mu_{t\wedge\alpha_n}}
       {(1-\chi_R)f_{t\wedge\alpha_n}}\longrightarrow0.
\]
The variables are bounded by $\norm{\gamma}_\infty c_n$ using
\eqref{eq:product-mass-at-stop}, so the convergence also holds in
$L^1(\Q)$; the initial pairing is analogous.
The two stochastic tails are controlled at the quadratic-variation level by
\begin{eqnarray*}
 \int_0^{\alpha_n}
 \left|\ip{\mu_s}{(1-\chi_R)g_s}\right|^2\dd s
 &\le& Cc_n^2 R^{2(\lambda-q_0)}
 \int_0^{\alpha_n}(K_s^g)^2\dd s,
 \label{eq:g-tail-qv}\\
 \int_0^{\alpha_n}
 \left|\ip{\mu_s}{(1-\chi_R)f_sh_s}\right|^2\dd s
 &\le& C\|\gamma\|_\infty^2c_n^2
 \left(TR^{2(1-q_0)}+kR^{-2q_0}\right).
 \label{eq:fh-tail-qv}
\end{eqnarray*}
Indeed, on $|x|\ge R$,
$\br{x}^\lambda\le R^{\lambda-q_0}\br{x}^{q_0}$, while integration against $\mu_s$ gives
\[
 \ip{\mu_s}{\one_{\{|x|\ge R\}}(1+|x|+|u_s|)}
 \le C\bigl(R^{1-q_0}+|u_s|R^{-q_0}\bigr)
 \ip{\mu_s}{\br{\cdot}^{q_0}}.
\]
Here \eqref{eq:product-moment-before-stop} is used
only for $s<\alpha_n$; the value at the single endpoint is irrelevant.
Because $q_0\ge\lambda+1$ and $q_0\ge2$, both right-hand sides tend
to zero in $L^1(\Q)$.  BDG therefore passes to the limit in the
stochastic integral.  We obtain
\begin{equation}
 \ip{\mu_{t\wedge\alpha_n}}{f_{t\wedge\alpha_n}}
 =\ip{\mu_0}{f_0}
 +\int_0^{t\wedge\alpha_n}\ip{\mu_s}{g_s+f_sh_s}^\top\dd Y_s.
 \label{eq:localized-final-product}
\end{equation}

\medskip
\noindent\emph{Step 3: remove the localizations and take expectations.}

Set
\[
 H_s^\mu:=\left\langle\mu_s,g_s+f_sh_s\right\rangle,
 \qquad
 \mathcal N_t^{(n)}:=\int_0^{t\wedge\alpha_n}(H_s^\mu)^\top\dd Y_s.
\]
The stochastic integrals are consistent under stopping and hence define a
continuous local martingale $\mathcal N$ on the stochastic interval
$[0,\theta)$, localized by $(\alpha_n)$.  For each sample point and every
$t<\theta$, one has $t<\alpha_n$ eventually.  The same compact-tightness
argument used in \Cref{lem:terminal-pairing-convergence} shows that
$t\mapsto\langle\mu_t,f_t\rangle$ is continuous on $[0,\theta)$.  Taking a
countable intersection for rational $t$ and then using continuity in the
stopped identities therefore shows that, simultaneously for all $t<\theta$,
\begin{equation}
 \left\langle\mu_t,f_t\right\rangle
 =\left\langle\mu_0,f_0\right\rangle+\mathcal N_t.
 \label{eq:product-before-terminal}
\end{equation}
The pathwise argument in \Cref{lem:terminal-pairing-convergence} applies to
an arbitrary sequence $t_j\uparrow\theta$: narrow continuity of $\mu$,
local-uniform time continuity of $f$, and $|f|\le\|\gamma\|_\infty$ give
\begin{equation*}
 \left\langle\mu_t,f_t\right\rangle
 \longrightarrow \left\langle\mu_\theta,\gamma\right\rangle
 \quad\text{as }t\uparrow\theta,
 \qquad\Q\text{-a.s.}
 \label{eq:product-terminal-pathwise-limit}
\end{equation*}
Thus \eqref{eq:product-before-terminal} gives a finite pathwise limit for
$\mathcal N_t$ as $t\uparrow\theta$.  The continuous-local-martingale
convergence theorem (equivalently, the Dambis--Dubins--Schwarz
representation) then implies
\[
 \langle\mathcal N\rangle_\theta
 =\int_0^\theta|H_s^\mu|^2\dd s<\infty
 \quad\Q\text{-a.s.}
\]
and $\mathcal N$ extends continuously to $\theta$ by
$\mathcal N_\theta:=\lim_{t\uparrow\theta}\mathcal N_t$.  Passing to the
limit in \eqref{eq:product-before-terminal} proves
\eqref{eq:measure-product} at the terminal time, and the stopped form gives
it for every $t\ge\theta$.  This also proves that the stochastic integral in
\eqref{eq:measure-product} is a localized continuous local martingale on the
closed stochastic interval $[0,\theta]$.

For every fixed $n$,
\eqref{eq:product-moment-before-stop},
\eqref{eq:Kfg-integrability}, \eqref{eq:f-bounded}, the linear growth
of $h$, and the control-energy bound give
\begin{align*}
 \E^\Q\int_0^{\alpha_n}
 \left|\ip{\mu_s}{g_s+f_sh_s}\right|^2\dd s
 \le Cc_n^2\left(
   \E^\Q\int_0^\theta(K_s^g)^2\dd s
   +\norm{\gamma}_\infty^2(T+k)\right)<\infty.
\end{align*}
Thus the stochastic integral stopped at $\alpha_n$ is a
square-integrable martingale.  Taking $t=T$ in
\eqref{eq:localized-final-product} and then expectations gives
\[
 \E^\Q\ip{\mu_{\alpha_n}}{f_{\alpha_n}}
 =\E^\Q\ip{\mu_0}{f_0}.
\]
Lemma~\ref{lem:terminal-pairing-convergence}, applied to
$\alpha_n\uparrow\theta$, gives
\[
 \ip{\mu_{\alpha_n}}{f_{\alpha_n}}
 \longrightarrow\ip{\mu_\theta}{\gamma}
 \quad\text{in }L^1(\Q).
\]
Letting $n\to\infty$ proves \eqref{eq:duality-expectation}.

\end{proof}

\subsection{Pathwise uniqueness of the Zakai equation}

\begin{theorem}[Pathwise uniqueness for the stopped Zakai equation]
\label{thm:zakai-uniqueness}
Suppose \Cref{ass:bspde-regularity} holds. For each $k$, the stopped Zakai equation has at most one admissible solution with a prescribed initial measure.  More precisely, on the same filtered probability space, with the same $Y$, $u$, and initial measure, any two admissible solutions are indistinguishable on $[0,\tau_k]$.  The same assertion holds on every $\mathbb F^Y$-stopping horizon $\vartheta\le\tau_k$.
\end{theorem}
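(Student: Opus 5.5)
Let $\mu^1,\mu^2$ be two admissible solutions on $[0,\tau_k]$ with the same initial measure $\nu$. Fix an $\mathbb F^Y$-stopping horizon $\vartheta\le\tau_k$. The plan is to use the duality identity \eqref{eq:duality-expectation} of \Cref{thm:measure-product} with a random terminal test field, in the manner announced in the introduction. First fix a deterministic $\varphi$ in a countable convergence-determining class $\mathcal D\subset C_c^\infty(\R^d)$. Then fix a stopping time $\theta\le\vartheta$, and put
\[
 \gamma(\omega,x):=\operatorname{sgn}\bigl\langle\mu^1_\theta-\mu^2_\theta,\varphi\bigr\rangle(\omega)\,\varphi(x).
\]
This field is $\FY_\theta$-measurable in $\omega$, smooth and bounded in $x$, and has all derivatives bounded by those of $\varphi$. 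I will check that it lies in the terminal class $\mathscr G_\theta^{n_*+1}$ required by \Cref{thm:main-bspde}. If that class demands more regularity in $\omega$ than a sign function provides, I would instead approximate the sign by bounded $\FY_\theta$-measurable variables. These variables take finitely many values or are smooth functionals, and bounded convergence would pass to the limit at the end.

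Let $(f,g)$ be the solution of \eqref{eq:main-bspde} on $[0,\theta]$ with terminal value $\gamma$, given by \Cref{thm:main-bspde}. Both $\mu^i$ are admissible on $[0,\theta]$: the restriction of an admissible solution to $[0,\theta]$ keeps narrow continuity, the Zakai identity, the class-D mass property, and the localized $q_0$-moment. So \Cref{thm:measure-product} applies to each $\mu^i$ with the same $(f,g)$, and gives
\[
 \E^\Q\langle\mu^i_\theta,\gamma\rangle=\E^\Q\langle\nu,f_0\rangle,\qquad i=1,2.
\]
The right-hand side is finite, since $|f_0|\le\|\gamma\|_\infty$ and $\nu$ is finite. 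Subtracting the two identities and using linearity of the BSPDE, which is what makes $f$ common to both, yields
\[
 \E^\Q\bigl|\langle\mu^1_\theta-\mu^2_\theta,\varphi\rangle\bigr|=\E^\Q\langle\mu^1_\theta-\mu^2_\theta,\gamma\rangle=0.
\]
Hence $\langle\mu^1_\theta,\varphi\rangle=\langle\mu^2_\theta,\varphi\rangle$ $\Q$-a.s. This holds for every $\varphi\in\mathcal D$ and every stopping time $\theta\le\vartheta$; $\Q$ here is $\Q^k$, which is equivalent to $\Pp$. The pairings $\langle\mu^i,\varphi\rangle$ are continuous adapted processes, because admissible paths are narrowly continuous. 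By the optional section theorem, or more simply by taking deterministic $\theta=t\wedge\vartheta$ for rational $t$ and then using path continuity, the two processes are indistinguishable on $[0,\vartheta]$. Intersecting over the countable class $\mathcal D$ gives $\mu^1_t=\mu^2_t$ for all $t\le\vartheta$ outside one null set, because $\mathcal D$ determines finite measures. The case $\vartheta=\tau_k$ gives the main assertion.

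I expect the main obstacle to be the admissibility of the random terminal datum. I must check that $\gamma$, which depends on $\omega$ through a non-smooth functional of $\mu^1_\theta,\mu^2_\theta$, belongs to $\mathscr G_\theta^{n_*+1}$. I must also check that the BSPDE is solvable up to a general stopping time $\theta\le\tau_k$ and not just up to $\tau_k$. If the terminal class requires only bounded $\FY_\theta$-measurable coefficients times a fixed smooth bounded function, this is immediate. Otherwise I would use the approximation argument above. That argument relies on the linearity of $\gamma\mapsto(f,g)$ and on the bound \eqref{eq:f-bounded}, which allow $\E^\Q\langle\mu^i_\theta,\gamma_n\rangle\to\E^\Q\langle\mu^i_\theta,\gamma\rangle$ by dominated convergence via the class-D mass.
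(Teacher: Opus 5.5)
Your proposal is correct and follows essentially the same route as the paper. You test against $\gamma=\sgn\langle\mu^1_\theta-\mu^2_\theta,\varphi\rangle\varphi$ at $\theta=t\wedge\vartheta$ and subtract the two instances of \eqref{eq:duality-expectation} to get $\E^\Q|\langle\mu^1_\theta-\mu^2_\theta,\varphi\rangle|=0$; you then conclude with rational times, a countable determining class, and narrow continuity. Your fallback approximation of the sign is unnecessary: any $\xi\varphi$ with bounded $\FY_\theta$-measurable $\xi$ and $\varphi\in C_c^{n_*+1}$ already lies in $\mathscr T_\theta^{n_*+1}\subset\mathscr G_\theta^{n_*+1}$, and \Cref{thm:main-bspde} is stated for every stopping time $\theta\le\tau_k$. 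Also, $(f,g)$ is common to both solutions simply because it depends only on $\gamma$, not on $\mu^i$, so linearity of the BSPDE plays no role there.
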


\begin{proof}
Let $\mu^1,\mu^2$ be two admissible solutions with the same initial measure and set
$\bar\mu=\mu^1-\mu^2$.  Fix $t\in[0,T]$ and
$\varphi\in C_c^{n_*+1}(\R^d)$, and put
$\theta=t\wedge\tau_k$.  The random variable
\begin{equation*}
 \xi:=\sgn\ip{\bar\mu_\theta}{\varphi}
 \label{eq:sign-xi}
\end{equation*}
is bounded and $\FY_\theta$-measurable.  Hence
$\gamma(\omega,x)=\xi(\omega)\varphi(x)$ belongs to
$\mathscr T_\theta^{n_*+1}$.  Let $(f,g)$ solve the corresponding BSPDE \eqref{eq:main-bspde}.

Apply \eqref{eq:duality-expectation} to $\mu^1$ and $\mu^2$ and subtract.  Since their initial measures agree,
\[
 \E^\Q\ip{\bar\mu_\theta}{\xi\varphi}=0.
\]
By the definition of $\xi$,
\begin{equation*}
 \E^\Q\left|\ip{\bar\mu_{t\wedge\tau_k}}{\varphi}\right|=0.
 \label{eq:pairing-zero}
\end{equation*}
Thus the pairing vanishes almost surely.

Choose a countable measure-determining class
$\mathcal D_0\subset C_c^{n_*+1}(\R^d)$.  Intersecting the
full-probability events obtained above for $t\in\mathbb Q\cap[0,T]$ and
$\varphi\in\mathcal D_0$ gives
\[
 \left\langle\mu^1_{t\wedge\tau_k},\varphi\right\rangle
 =\left\langle\mu^2_{t\wedge\tau_k},\varphi\right\rangle
 \quad\text{for every }(t,\varphi)\in
 (\mathbb Q\cap[0,T])\times\mathcal D_0
\]
on one event of probability one.  The determining property and narrow
continuity of both stopped paths then imply
$\mu^1_{t\wedge\tau_k}=\mu^2_{t\wedge\tau_k}$ for every $t\in[0,T]$ on
that same event; hence the two solutions are indistinguishable on
$[0,\tau_k]$.

Finally, let $\vartheta\le\tau_k$ be an arbitrary $\mathbb F^Y$-stopping
time.  Stopping an admissible solution at $\vartheta$ preserves the stopped
weak equation, narrow continuity, and the class-$D$ mass condition, while
$\eta_N\wedge\vartheta$ supplies the required local $q_0$-moment sequence.
The same verification applies if the solution is originally specified only
on $[0,\vartheta]$.  Repeating the preceding argument with
\[
 \theta=t\wedge\vartheta,
 \qquad
 \xi=\sgn\left\langle
      \mu^1_{t\wedge\vartheta}-\mu^2_{t\wedge\vartheta},\varphi
      \right\rangle
\]
gives equality at every rational stopped time for the same class
$\mathcal D_0$.  Narrow continuity again upgrades this equality to
indistinguishability on $[0,\vartheta]$.

\end{proof}

\section{Pathwise uniqueness of the filtering equation}
\label{KS-equation}

\begin{definition}[Admissible stopped KS solution]
\label{def:admissible-KS}
An admissible solution of \eqref{eq:KS-common-Y} is an
$\mathbb F^Y$-adapted, narrowly continuous probability-measure-valued
process $\pi$ satisfying that equation for every $C_c^2$ test function and
for which, with
\[
 m_s:=\ip{\pi_s}{h(\cdot,u_s)},
\]
there are increasing $\mathbb F^Y$-stopping times
$\eta_n\uparrow\tau_k$ such that
\begin{equation}
 \sup_{0\le s\le\eta_n}
   \ip{\pi_s}{\br{\cdot}^{q_0}}
 +\int_0^{\eta_n}|m_s|^2\dd s<\infty
 \quad\text{a.s. for every }n.
 \label{eq:KS-local-m}
\end{equation}

\end{definition}

\begin{theorem}[Pathwise uniqueness for the stopped KS equation]
\label{thm:KS-uniqueness}
Suppose \Cref{ass:bspde-regularity} holds. For each $k$, equation \eqref{eq:KS-common-Y} has at most one admissible solution with a prescribed initial probability measure.
\end{theorem}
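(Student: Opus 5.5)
The plan is to reduce KS uniqueness to the Zakai uniqueness of \Cref{thm:zakai-uniqueness} by explicit unnormalization. Given an admissible KS solution $\pi$ with $m_s=\ip{\pi_s}{h(\cdot,u_s)}$, define
\[
 Z_t:=\mathcal E\!\left(\int_0^{\,\cdot\wedge\tau_k}m_s^\top\dd Y_s\right)_t,
 \qquad \mu_t:=Z_t\pi_t .
\]
By \eqref{eq:KS-local-m}, $\int_0^{\eta_n}|m_s|^2\dd s<\infty$, so the stochastic exponential is a well-defined, continuous, strictly positive local martingale on $[0,\tau_k]$. I will also use \eqref{eq:KS-local-m} to get $\int_0^{\tau_k}|m_s|^2\dd s<\infty$ a.s. This needs a separate argument if $\eta_n$ only increases to $\tau_k$ without reaching it. There I would use the bound $|m_s|\le K(1+|u_s|+\ip{\pi_s}{\br{\cdot}})$ together with boundedness of the narrowly continuous moment on the closed interval. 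If that fails, I would carry out the argument on $[0,\eta_n]$ and let $n\to\infty$ at the end.

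\emph{Step 1: $\mu$ solves the stopped Zakai equation.} Apply It\^o's product rule to $Z_t\ip{\pi_t}{\varphi}$ for $\varphi\in C_c^2$, localized along $\eta_n$. Since $\dd Z=Zm^\top\dd Y$, the bracket term is $Z_tB_t(\varphi)^\top m_t\dd t$, which cancels the $-B^\top m\dd t$ drift in \eqref{eq:KS-common-Y}. The stochastic coefficient becomes
\[
 Z\bigl(B(\varphi)+\ip{\pi}{\varphi}m\bigr)=Z\ip{\pi}{\varphi h}=\ip{\mu}{\varphi h},
\]
which is exactly \eqref{eq:zakai-stopped}. The remaining admissibility conditions are then checked one by one. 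Narrow continuity holds because $Z$ is continuous. The local $q_0$-moment follows from $\sup_{s\le\eta_n'}Z_s<\infty$ combined with \eqref{eq:KS-local-m}, after intersecting the localizing sequences.

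\emph{Step 2: the class-$D$ mass condition, which I expect to be the main obstacle.} The mass of $\mu$ is $Z$, which is only a positive local martingale, so it need not be of class $D$. The way around this is to run the Zakai uniqueness argument on localized horizons instead. Let $\vartheta_n:=\eta_n\wedge\inf\{t:Z_t+Z^{*}_t\ge n\}$, where $Z^*$ is the mass built from the second solution. On $[0,\vartheta_n]$ both masses are bounded by $n$, so class $D$ holds trivially. By the last sentence of \Cref{thm:zakai-uniqueness}, uniqueness holds on every $\mathbb F^Y$-stopping horizon $\vartheta\le\tau_k$, so the stopped processes are admissible there.

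\emph{Step 3: comparing two solutions.} Take two admissible KS solutions $\pi^1,\pi^2$ with the same initial law, and form $\mu^i=Z^i\pi^i$ with $Z^i_0=1$ and $\mu^i_0=\nu$. Step 2 gives $\mu^1=\mu^2$ on $[0,\vartheta_n]$, up to indistinguishability. Taking masses yields $Z^1=Z^2$ there, and dividing gives $\pi^1=\pi^2$ on $[0,\vartheta_n]$. Since the $Z^i$ are continuous and finite, and $\eta_n\uparrow\tau_k$, we have $\vartheta_n\uparrow\tau_k$. Both $\pi^i$ are narrowly continuous up to $\tau_k$, so letting $n\to\infty$ gives indistinguishability on all of $[0,\tau_k]$.

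Two technical points remain. The first is measurability: the two solutions must satisfy \eqref{eq:KS-common-Y} with their own $m^i$ and $B^i$ built from $\pi^i$ rather than from the true filter. I read the equation in that intrinsic sense, and Step 1 is carried out for a generic admissible $\pi$. The second is the time endpoint $\tau_k$ itself, which is handled by continuity.
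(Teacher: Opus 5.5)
Your proposal is correct and is essentially the paper's proof. You unnormalize each $\pi^i$ by the stochastic exponential of $\int(m^i)^\top\dd Y$, working under $\Q^k$, where the KS identity is the same pathwise identity as under $\Pp$. You verify the Zakai equation by It\^o's formula and localize so that both masses are bounded (making class $D$ trivial) and the $q_0$-moments are finite. You then apply \Cref{thm:zakai-uniqueness} on these $\mathbb F^Y$-stopping horizons, equate masses, divide, and reach $\tau_k$ by narrow continuity. The paper's localizer also stops on $\int(|m^1|^2+|m^2|^2)\dd s$ and keeps $R^i$ above $n^{-1}$, which is convenient but not needed.

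Your first suggested route to $\int_0^{\tau_k}|m_s|^2\dd s<\infty$ does not work, because narrow continuity gives no bound on $\ip{\pi_s}{\br{\cdot}}$ up to $\tau_k$. Your fallback, working only on $[0,\eta_n]$ and never using the value at $\tau_k$, is exactly what the paper does.
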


\begin{proof}

The whole unnormalization argument is carried out under the stopped reference
probability $\Q^k$.  On $[0,\tau_k]$ the process denoted by $Y$ is the
reference Brownian motion $Y^k$, and the Kushner--Stratonovich identity is the
same pathwise continuous-semimartingale identity under $\Q^k$ and under the
equivalent physical probability $\Pp$.  We transfer the final
indistinguishability statement back to $\Pp$ by equivalence.

Let $\pi^i$, $i=1,2$, be two solutions.  Put

\begin{equation*}
\begin{aligned}
 m_t^i&:=\ip{\pi_t^i}{h(\cdot,u_t)},\\
 R_t^i&:=\cE\left(
      \int_0^{\,\cdot}(m_s^i)^\top\dd Y_s\right)_t,
 \qquad
 \mu_t^i:=R_t^i\pi_t^i,\quad 0\le t<\tau_k.
\end{aligned}
 \label{eq:KS-unnormalization}
\end{equation*}

These processes are understood on their local
square-integrability intervals.  If
$\int_0^{\tau_k}|m_s^i|^2\dd s<\infty$, they are extended
continuously to $\tau_k$; no value of $R_{\tau_k}^i$ is otherwise
needed.  The stochastic exponentials are strictly positive continuous
local martingales on these local domains.  Apply It\^o's
formula to $R_t^i\ip{\pi_t^i}{\varphi}$ on any such interval.  With
$B_t^i(\varphi)=\ip{\pi_t^i}{\varphi h}-\ip{\pi_t^i}{\varphi}m_t^i$, the drift terms involving $m^i$ cancel:
\begin{align*}
 \dd\left(R_t^i\ip{\pi_t^i}{\varphi}\right)
 ={}&R_t^i\ip{\pi_t^i}{L_t\varphi}\dd t\\
 &+R_t^i\left(B_t^i(\varphi)
          +\ip{\pi_t^i}{\varphi}m_t^i\right)^\top\dd Y_t\\
 ={}&\ip{\mu_t^i}{L_t\varphi}\dd t
   +\ip{\mu_t^i}{\varphi h(\cdot,u_t)}^\top\dd Y_t.
\end{align*}
Thus $\mu^i$ solves the Zakai equation.

For $i=1,2$, choose increasing localizers
$\eta_n^i\uparrow\tau_k$ as in \eqref{eq:KS-local-m}.

Define on all of $[0,T]$
\[
 \bar A_t^i:=\int_0^{t\wedge\tau_k}|m_s^i|^2\dd s,
\]
where the value at $\tau_k$ is the monotone limit in
$[0,+\infty]$.  Every stochastic exponential below is first defined up to
the indicated energy stop and then held constant after that stop.

For $n\ge2$, define
\begin{align}
 \kappa_n
 &:=\inf\{t\in[0,T]:\bar A_t^1+\bar A_t^2\ge n\}\wedge\tau_k,
 \notag\\
 \delta_n
 &:=\inf\left\{t\ge0:
       R_{t\wedge\kappa_n}^1\notin(n^{-1},n)
       \quad\text{or}\quad
       R_{t\wedge\kappa_n}^2\notin(n^{-1},n)\right\}\wedge\kappa_n,
 \notag\\
 \rho_n&:=\eta_n^1\wedge\eta_n^2\wedge\delta_n.
 \label{eq:KS-common-localizers}
\end{align}
If $\kappa_n=\tau_k$, the increasing energy has a finite limit no
larger than $n$, so the local exponentials extend continuously through
$\kappa_n$; if $\kappa_n<\tau_k$, their brackets up to $\kappa_n$ are
bounded by $n$.  Thus $R^i$ is defined throughout the interval used
for $\delta_n$.  The quantities in
\eqref{eq:KS-common-localizers} are genuine stopping times, the
sequence is increasing, and $\rho_n\uparrow\tau_k$.  Indeed, on every
compact subinterval of $[0,\tau_k)$ the energies are finite and each
strictly positive continuous path $R^i$ is bounded above and away
from zero.

On $[0,\rho_n]$, continuity at first exits gives
$n^{-1}\le R^i\le n$ and $\bar A^1+\bar A^2\le n$.  Moreover,
\[
 \sup_{s\le\rho_n}
 \ip{\mu_s^i}{\br{\cdot}^{q_0}}
 \le n\sup_{s\le\eta_n^i}
       \ip{\pi_s^i}{\br{\cdot}^{q_0}}<\infty.
\]
Thus $\mu^i$ satisfies every condition of
\Cref{def:admissible-zakai} on the common horizon $\rho_n$, and the
common initial law gives $\mu_0^1=\mu_0^2$.
\Cref{thm:zakai-uniqueness} yields $\mu^1=\mu^2$ there.  Equality as
finite measures gives $R^1=R^2$ directly. Strict positivity then gives
$\pi^1=\pi^2$ on $[0,\rho_n]$.  Letting $n\to\infty$ and using narrow
continuity at $\tau_k$, first on a countable convergence-determining
class and then simultaneously in time, proves indistinguishability
on $[0,\tau_k]$.

\end{proof}

Finally, we remove the control-energy localization.

\begin{definition}[Consistent admissible stopped family]
\label{def:consistent-family}
A consistent admissible Zakai family consists of processes $(\mu^k)_{k\ge1}$ on the fixed physical stochastic basis such that $\mu^k$ is an admissible solution on $[0,\tau_k]$ under $\Q^k$, all members have the same initial measure, and, for every $\ell\ge k$,
\[
 \mu_t^\ell=\mu_t^k\quad\text{for all }t\le\tau_k,
 \qquad\Pp\text{-a.s.}
\]
The same equality holds under either local reference measure by equivalence and \eqref{eq:Q-consistency}.  A global admissible KS solution is interpreted under $\Pp$ and is required to satisfy Definition~\ref{def:admissible-KS} on every $[0,\tau_k]$.

\end{definition}

\begin{proposition}[Consistency of the actual stopped filters]
\label{prop:actual-consistent}
If the initial law satisfies \eqref{eq:initial-q0}, then the actual unnormalized filters $(\mu^k)_{k\ge1}$ defined by
\eqref{eq:mu-definition} form a consistent admissible stopped family.
\end{proposition}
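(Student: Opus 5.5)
Two things have to be checked: each $\mu^k$ is admissible, and the family is consistent across levels. Admissibility of each level under $\Q^k$ is exactly \Cref{prop:actual-admissible}, which applies because \eqref{eq:initial-q0} is assumed. The common initial measure is $\nu_0$: $\mathcal G_0^k$ is trivial up to $\mathcal N$, and $X_0$ has law $\nu_0$ under every $\Q^k$. So all the work is in consistency. Fix $\ell\ge k$; I aim to show $\mu^\ell_t=\mu^k_t$ for all $t\le\tau_k$, $\Pp$-a.s.

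\textbf{Step 1 (fixed times, fixed test function).} Fix $t$ and a bounded Borel $\varphi$, and set $\sigma:=t\wedge\tau_k$.
\begin{itemize}
\item By \eqref{eq:stopped-observation-identity} and $\tau_k\le\tau_\ell$, on $\{t\le\tau_k\}$ both $\mathcal G^k_t$ and $\mathcal G^\ell_t$ coincide with $\FY_{t\wedge\tau_k}=\FY_{\sigma}$. More precisely, $\mathcal G^\ell_{t}\cap\{t\le\tau_k\}$ and $\mathcal G^k_t\cap\{t\le\tau_k\}$ agree as traces, since $\FY_{t\wedge\tau_\ell}$ and $\FY_{t\wedge\tau_k}$ have the same trace on $\{t\le\tau_k\}$.
\item The integrand $M^\ell_{t\wedge\tau_\ell}\varphi(X_{t\wedge\tau_\ell})$ equals $M^k_{\sigma}\varphi(X_\sigma)$ on $\{t\le\tau_k\}$, because the two stochastic exponentials share their integrand up to $\tau_k$.
\item By \eqref{eq:Q-consistency}, $\Q^\ell$ and $\Q^k$ agree on $\F_{\tau_k}$, which contains every quantity involved.
\end{itemize}
I will use the local property of conditional expectation on the $\FY_{t\wedge\tau_k}$-measurable set $\{t\le\tau_k\}$: if two $\sigma$-fields coincide on an event belonging to both, and the integrands coincide there, then the conditional expectations coincide there. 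To stay with a single reference measure, I rewrite each conditional expectation as a Bayes ratio under $\Pp$ (as in \Cref{lem:mass-process}). The numerator is $\E^{\Pp}[\Lambda^\ell_T M^\ell\varphi\mid\cdot]$ and the denominator is the density conditioned on the same $\sigma$-field. After optional sampling, both reduce on $\{t\le\tau_k\}$ to the same $\Pp$-expressions in $\Lambda^k_{\tau_k}$. This yields $\langle\mu^\ell_t,\varphi\rangle=\langle\mu^k_t,\varphi\rangle$ a.s. on $\{t\le\tau_k\}$.

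\textbf{Step 2 (upgrade to all times).} Next I take a countable convergence-determining class $\{\varphi_j\}\subset C_c^2$ and rational $t$, and intersect the resulting full-measure events. On $\{t\le\tau_k\}$ with $t$ irrational, I approximate $t$ by rationals $t_n\uparrow t$, which satisfy $t_n\le\tau_k$. Narrow continuity of both versions, from \Cref{prop:actual-admissible}, then gives equality at every $t<\tau_k$. At $t=\tau_k$ itself, left-continuity in the narrow topology closes the argument. The resulting equality holds $\Pp$-a.s., and hence under either $\Q^k$ or $\Q^\ell$ by equivalence.

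\textbf{Main obstacle.} The delicate step is Step 1: the two filters are built as optional projections in \emph{different} filtrations $\mathbb G^k$ and $\mathbb G^\ell$ under different measures, and the versions were fixed via kernel selections and evanescent modifications. I will handle this by working only with fixed-time conditional expectations, which hold a.s. at each time by \eqref{eq:mu-definition}, and the trace identity of $\sigma$-fields on $\{t\le\tau_k\}$. Indistinguishability then comes solely from narrow continuity, so the specific version choices never enter. An alternative, if the trace identity proves awkward, is to check that $\mu^\ell$ stopped at $\tau_k$ is an admissible solution of the level-$k$ stopped Zakai equation and invoke \Cref{thm:zakai-uniqueness} with $\vartheta=\tau_k$. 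That route, however, requires \Cref{ass:bspde-regularity}, so I would use the direct projection argument.
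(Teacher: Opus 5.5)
Your proposal is correct and is essentially the paper's argument: you identify the fixed-time conditional expectations defining $\mu^\ell$ and $\mu^k$ using that the densities agree up to $\tau_k$ and that $\Q^\ell=\Q^k$ on $\F_{\tau_k}$, and then you upgrade to all $t\le\tau_k$ via a countable convergence-determining class, rational times, and the narrowly continuous versions from \Cref{prop:actual-admissible}. The only difference is technical: the paper evaluates both optional kernels at the stopping time $t\wedge\tau_k$, where both conditioning $\sigma$-fields are literally $\FY_{t\wedge\tau_k}$ and every quantity is $\F_{\tau_k}$-measurable, so it needs neither your trace/local-property argument on $\{t\le\tau_k\}$ nor the Bayes detour through $\Pp$.
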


\begin{proof}
Fix
$\ell\ge k$, a bounded Borel function $\varphi$, and a deterministic $t$.  Up to $t\wedge\tau_k$, the density processes agree,
$M_{t\wedge\tau_k}^\ell=M_{t\wedge\tau_k}^k$, and
$\Q^\ell=\Q^k$ on $\F_{\tau_k}$ by
\eqref{eq:Q-consistency}.  Hence, for every
$A\in\FY_{t\wedge\tau_k}$,
\begin{align*}
 &\E^{\Q^\ell}\!\left[
   \one_A M_{t\wedge\tau_k}^\ell
       \varphi(X_{t\wedge\tau_k})\right]=
 \E^{\Q^k}\!\left[
   \one_A M_{t\wedge\tau_k}^k
       \varphi(X_{t\wedge\tau_k})\right].
\end{align*}
The two local probabilities also have the same restriction to
$\FY_{t\wedge\tau_k}$.  Therefore their conditional expectations in
\eqref{eq:mu-definition} coincide:
\begin{equation*}
 \langle\mu_{t\wedge\tau_k}^\ell,\varphi\rangle
 =\langle\mu_{t\wedge\tau_k}^k,\varphi\rangle
 \quad\Q^k\text{-a.s.}
 \label{eq:actual-filter-consistency}
\end{equation*}
Choose a countable convergence-determining family of test functions and rational $t$, and then use the narrowly continuous versions constructed in
\Cref{prop:actual-admissible}.  This yields equality of the finite measures simultaneously for all $t\le\tau_k$.  Equivalence transfers the equality to $\Pp$ and to either local reference probability, proving consistency.
\end{proof}

\begin{theorem}[Uniqueness after patching the stopped equations]
\label{thm:global-uniqueness}
Suppose \Cref{ass:bspde-regularity} holds.  Then:
\begin{enumerate}[label=\textnormal{(\roman*)}]
\item for any prescribed initial measure, there is at most one consistent family whose $k$th member is an admissible stopped Zakai solution under $\Q^k$;
\item global admissible probability-measure-valued solutions of the Kushner--Stratonovich equation are pathwise unique on $[0,T]$;
\item if the prescribed initial probability measure has a finite moment of order $q_0=d+4$, the actual stopped unnormalized filters exist in the admissible class and form the consistent family of \Cref{prop:actual-consistent}.
\end{enumerate}
\end{theorem}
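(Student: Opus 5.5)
The three assertions reduce to the levelwise results already proved, combined with $\tau_k\uparrow T$ almost surely. Throughout, statements proved under $\Q^k$ are transferred to $\Pp$, and back, by the equivalence $\Q^k\sim\Pp$ and the consistency relation \eqref{eq:Q-consistency}. No single global reference measure is ever used; every identity is verified on some $[0,\tau_k]$ under $\Q^k$ and read off under $\Pp$.

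For (i), take two consistent families $(\mu^{1,k})_k$ and $(\mu^{2,k})_k$ with the same initial measure. For each fixed $k$, both $\mu^{1,k}$ and $\mu^{2,k}$ are admissible solutions of the stopped Zakai equation on $[0,\tau_k]$ under $\Q^k$, driven by the same $Y^k=Y$ on $[0,\tau_k]$ and the same $u$. \Cref{thm:zakai-uniqueness} therefore makes them indistinguishable on $[0,\tau_k]$, $\Q^k$-a.s., and hence $\Pp$-a.s. Intersecting these countably many full-measure events over $k$ gives equality of the two families simultaneously for all $k$. Since $\tau_k\uparrow T$, this identifies the patched processes on $\bigcup_k[0,\tau_k]$.

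For (ii), let $\pi^1$ and $\pi^2$ be global admissible KS solutions under $\Pp$. For each $k$, the restriction to $[0,\tau_k]$ satisfies \eqref{eq:KS-common-Y}, which is a pathwise semimartingale identity and so holds equally under $\Q^k$. By \Cref{def:consistent-family}, the restriction also satisfies \Cref{def:admissible-KS}. \Cref{thm:KS-uniqueness} then gives $\pi^1=\pi^2$ on $[0,\tau_k]$ outside a null set $N_k$. Off $\bigcup_kN_k$, the two processes agree on $[0,\tau_k]$ for every $k$.

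\textbf{Main obstacle.} The step that needs care is the terminal time. Because $\tau_k\uparrow T$ almost surely, there is a random $k(\omega)$ with $\tau_{k(\omega)}=T$ whenever the control energy stays strictly below the level $k$. But $\tau_k=T$ can fail for every $k$ if $\int_0^t|u_s|^2\,ds$ reaches $k$ exactly at $T$; this is excluded only at all levels except possibly one. I would handle this endpoint in either of two ways. The first is to note that $\int_0^T|u|^2<\infty$ a.s., so for $k$ larger than this integral the hitting set is empty and $\tau_k=T$ by the convention $\inf\varnothing=+\infty$ truncated at $T$. The second is to use narrow continuity of $\pi^1$ and $\pi^2$ at $T$. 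Either way, the two solutions are indistinguishable on all of $[0,T]$.

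For (iii), \Cref{prop:actual-admissible} shows that each $\mu^k$ is admissible under \eqref{eq:initial-q0}. Its initial measure is $\nu_0$ for every $k$, since $X_0$ has law $\nu_0$ under each $\Q^k$. \Cref{prop:actual-consistent} gives consistency across levels. Combining (iii) with (i) shows that this family is the unique consistent admissible family. Normalizing it by \Cref{lem:mass-process} produces a KS solution that is admissible on each $[0,\tau_k]$, using \eqref{eq:m-P-L2} together with the moment bound; by (ii) this solution is the unique global one.
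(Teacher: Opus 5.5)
Your proposal is correct and essentially matches the paper's proof. You apply \Cref{thm:zakai-uniqueness} and \Cref{thm:KS-uniqueness} at each level, transfer the result via $\Q^k\sim\Pp$, patch along $\tau_k\uparrow T$, and obtain (iii) from \Cref{prop:actual-admissible,prop:actual-consistent}; the terminal-time ``obstacle'' is illusory, because $\tau_k=T$ both when the energy first reaches $k$ exactly at $T$ and, by $\inf\varnothing=+\infty$, for every $k>\int_0^T|u_s|^2\dd s$, which is precisely the paper's parenthetical remark alongside its narrow-continuity argument.
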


\begin{proof}
For every $k$, \Cref{thm:zakai-uniqueness,thm:KS-uniqueness} give uniqueness on $[0,\tau_k]$ under $\Q^k$.  Since $\Q^k$ and $\Pp$ are equivalent, indistinguishability is measure independent.  Two consistent Zakai families therefore agree member by member; this proves part~\textnormal{(i)} and is a uniqueness statement about a family of stopped equations, not about one global equation under one $\Q$.

For two global admissible Kushner--Stratonovich solutions, compare the same physical processes on every $[0,\tau_k]$.  The stopped uniqueness theorem gives equality on each such interval, and $\tau_k\uparrow T$ almost surely by the pathwise energy assumption.  Both solutions have the same prescribed initial law.  Equality at the terminal time follows on a countable convergence-determining class by narrow continuity as $t\uparrow T$ (in fact, pathwise finite control energy also implies that $\tau_k=T$ for every sufficiently large integer $k$ on each sample path).  This proves part~\textnormal{(ii)}.  Part~\textnormal{(iii)} is the combination of
\Cref{prop:actual-admissible,prop:actual-consistent}.
\end{proof}

\section{Stability of the nonlinear filters}
\label{sec:stability}

The preceding sections establish uniqueness for a fixed admissible filtering system.  We now record a complementary robustness result when the observation-adapted input, and hence both the signal and the observation model, vary.  Since the physical probabilities vary with the model, all objects are first realized on one common reference space.

Let $(\Omega,\F,\mathbb F,\Q)$ satisfy the usual conditions, and suppose that $W$ and $Y$ are independent Brownian motions of dimensions $r$ and $m$, respectively.  Let $u_n$ and $u$ be $\mathbb F^Y$-predictable processes, and let $X^n$ and $X$ solve
\begin{eqnarray*}
 \dd X_t^n
 &=&b(X_t^n,u_n(t))\dd t+\sigma(X_t^n,u_n(t))\dd W_t,
 \label{eq:stability-state-n}\\
 \dd X_t
 &=&b(X_t,u(t))\dd t+\sigma(X_t,u(t))\dd W_t.
 \label{eq:stability-state-limit}
\end{eqnarray*}
Put
\begin{equation*}
 H_t^n:=h(X_t^n,u_n(t)),
 \qquad
 H_t:=h(X_t,u(t)),
 \label{eq:stability-H}
\end{equation*}
and assume that their squared time integrals are finite almost surely.  Define the reference-to-physical likelihoods
\begin{eqnarray*}
 \mathsf{Z}_t^n
 &:=&\cE\left(\int_0^{\,\cdot}(H_s^n)^\top\dd Y_s\right)_t,
 \label{eq:stability-Zn}\\
 \mathsf{Z}_t
 &:=&\cE\left(\int_0^{\,\cdot}H_s^\top\dd Y_s\right)_t.
 \label{eq:stability-Z}
\end{eqnarray*}
For every $n$, suppose that $\mathsf{Z}^n$ is a uniformly integrable $\Q$-martingale, and define
\begin{equation*}
 \frac{\dd\Pp^n}{\dd\Q}\bigg|_{\F_T}=\mathsf{Z}_T^n.
 \label{eq:stability-Pn}
\end{equation*}
Under $\Pp^n$, the process $Y_t-\int_0^tH_s^n\dd s$ is Brownian, while $W$ remains Brownian.  Thus $\Pp^n$ realizes the physical filtering system corresponding to $u_n$.  Let
\begin{equation*}
 \pi_t^n:=\Pp^n(X_t^n\in\cdot\mid\FY_t).
 \label{eq:stability-filter-n}
\end{equation*}
Whenever $\mathsf{Z}$ is a true martingale, $\Pp$ and $\pi_t$ are defined analogously.

For probability measures on $\R^d$, write
\begin{equation*}
 d_{\mathrm{BL}}(\mu,\nu)
 :=\sup_{\|\varphi\|_\infty+\operatorname{Lip}(\varphi)\le1}
 \left|\ip{\mu-\nu}{\varphi}\right|.
 \label{eq:BL-distance}
\end{equation*}
This metric induces the topology of weak convergence.  We write $W_1$ for the first Wasserstein distance on $\mathcal P_1(\R^d)$.

\begin{theorem}[Stability of the normalized filters]
\label{thm:filter-stability}
Suppose that
\begin{align}
 X^n&\longrightarrow X
 &&\text{in $\Q$-probability in }C([0,T];\R^d),
 \label{eq:stability-X-convergence}\\
 \int_0^T|H_t^n-H_t|^2\dd t&\longrightarrow0
 &&\text{in $\Q$-probability},
 \label{eq:stability-H-convergence}
\end{align}
and that
\begin{equation}
 \{\mathsf{Z}_T^n:n\ge1\}
 \quad\text{is uniformly integrable under }\Q.
 \label{eq:stability-Z-UI}
\end{equation}
Then $\mathsf{Z}$ is a uniformly integrable $\Q$-martingale,
\begin{equation}
 \mathsf{Z}_T^n\longrightarrow\mathsf{Z}_T
 \quad\text{in }L^1(\Q),
 \qquad
 \|\Pp^n-\Pp\|_{\mathrm{TV}}\longrightarrow0,
 \label{eq:stability-density-TV}
\end{equation}
and, for every fixed $t\in[0,T]$,
\begin{equation}
 d_{\mathrm{BL}}(\pi_t^n,\pi_t)
 \longrightarrow0
 \quad\text{in $\Q$-probability}.
 \label{eq:stability-BL-conclusion}
\end{equation}
In particular, for every $f\in C_b(\R^d)$,
\begin{equation}
 \ip{\pi_t^n}{f}\longrightarrow\ip{\pi_t}{f}
 \quad\text{in $\Q$-probability and in }L^p(\Q)
 \quad\text{for every }1\le p<\infty.
 \label{eq:stability-Cb-conclusion}
\end{equation}
If, in addition, the corresponding physical state processes satisfy
\begin{equation}
 \sup_{n\ge1}\E^{\Pp^n}
 \left[\sup_{0\le s\le T}|X_s^n|^2\right]
 +\E^{\Pp}
 \left[\sup_{0\le s\le T}|X_s|^2\right]
 <\infty,
 \label{eq:stability-second-moment}
\end{equation}
then, for every fixed $t\in[0,T]$,
\begin{equation}
 W_1(\pi_t^n,\pi_t)\longrightarrow0
 \quad\text{in $\Q$-probability}.
 \label{eq:stability-W1-conclusion}
\end{equation}
Consequently, convergence in probability of the filter pairings remains valid for every $f\in C(\R^d)$ satisfying
\begin{equation}
 |f(x)|\le C_f(1+|x|).
 \label{eq:stability-linear-growth-test}
\end{equation}
The probability convergences in \eqref{eq:stability-BL-conclusion} and \eqref{eq:stability-W1-conclusion} also hold under the limiting physical probability $\Pp$, and in the sense that the probabilities of the corresponding exceptional events under $\Pp^n$ tend to zero.
\end{theorem}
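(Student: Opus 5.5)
The argument has three parts: likelihood convergence, convergence of the unnormalized pairings via Kallianpur--Striebel, and an upgrade from bounded--Lipschitz to $W_1$ by uniform integrability.

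\emph{Likelihood convergence.} Write $\log\mathsf Z_T^n=\int_0^T(H^n_s)^\top\dd Y_s-\frac12\int_0^T|H^n_s|^2\dd s$. By \eqref{eq:stability-H-convergence} and the stochastic-integral continuity lemma (convergence of $\int|H^n-H|^2\dd s$ in probability implies convergence of the Itô integrals in probability, uniformly in time), $\log\mathsf Z^n\to\log\mathsf Z$ uniformly on $[0,T]$ in $\Q$-probability. Hence $\mathsf Z^n_T\to\mathsf Z_T$ in probability. Together with \eqref{eq:stability-Z-UI}, Vitali gives $L^1(\Q)$ convergence and $\E^\Q\mathsf Z_T=\lim_n\E^\Q\mathsf Z_T^n=1$. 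Since $\mathsf Z$ is a nonnegative local martingale whose terminal mean is one, it is a uniformly integrable martingale, by the same closing argument as in \Cref{prop:filtering-likelihood}. The total variation bound follows from $\|\Pp^n-\Pp\|_{\mathrm{TV}}=\E^\Q|\mathsf Z^n_T-\mathsf Z_T|$.

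\emph{Filter convergence.} Use the Kallianpur--Striebel formula under $\Q$: $\langle\pi^n_t,\varphi\rangle=\sigma^n_t(\varphi)/\sigma^n_t(1)$ with $\sigma^n_t(\varphi)=\E^\Q[\mathsf Z^n_t\varphi(X^n_t)\mid\FY_t]$, and similarly for the limit. Since $\mathsf Z^n_t=\E^\Q[\mathsf Z^n_T\mid\F_t]$, the family $\{\mathsf Z^n_t\}$ is uniformly integrable and $\mathsf Z^n_t\to\mathsf Z_t$ in $L^1$. For $\|\varphi\|_\infty+\mathrm{Lip}(\varphi)\le1$,
\[
\E^\Q|\sigma^n_t(\varphi)-\sigma_t(\varphi)|\le\E^\Q|\mathsf Z^n_t-\mathsf Z_t|+\E^\Q\bigl[\mathsf Z_t\bigl(|X^n_t-X_t|\wedge2\bigr)\bigr].
\]
The right side does not depend on $\varphi$, and the second term tends to zero by dominated convergence together with \eqref{eq:stability-X-convergence}. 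Hence $\E^\Q\sup_\varphi|\sigma^n_t(\varphi)-\sigma_t(\varphi)|\to0$, where the supremum is taken inside the expectation before conditioning. Since $\sigma_t(1)>0$ a.s., dividing gives $d_{\mathrm{BL}}(\pi^n_t,\pi_t)\le2\sup_\varphi|\sigma^n_t(\varphi)-\sigma_t(\varphi)|/\sigma_t(1)\to0$ in probability; this uses $|\sigma^n_t(\varphi)|\le\sigma^n_t(1)$. Convergence in probability transfers to $\Pp$ because $\Pp\ll\Q$. It transfers to $\Pp^n$ because $\Pp^n(A_n)\le\Pp(A_n)+\|\Pp^n-\Pp\|_{\mathrm{TV}}$. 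The $C_b$ statement follows from weak convergence in probability, via the subsequence criterion, and the $L^p$ statement follows from boundedness.

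\emph{$W_1$ upgrade.} Weak convergence plus convergence of first moments, or uniform integrability of $|x|$, gives $W_1$ convergence. It therefore suffices to show that $\langle\pi^n_t,|x|\one_{|x|>R}\rangle$ is small in $\Q$-probability uniformly in $n$ for large $R$. I would control this through
\[
\E^{\Pp^n}\langle\pi^n_t,|x|\one_{|x|>R}\rangle=\E^{\Pp^n}\bigl[|X^n_t|\one_{|X^n_t|>R}\bigr]\le\frac{C}{R}
\]
by \eqref{eq:stability-second-moment}. \textbf{The main obstacle} is converting this $\Pp^n$-smallness into $\Q$-smallness uniformly in $n$. I would use the $L^1$ convergence of $\mathsf Z^n_T$ to get uniform absolute continuity: for any $\delta>0$ there exists $c>0$ such that $\Q(\mathsf Z^n_T<c)<\delta$ for all large $n$. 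Then Markov's inequality under $\Pp^n$ controls the tail functional off this event. With this tightness, convergence of pairings with $f$ truncated at level $R$ extends to $\langle\pi^n_t,|x|\rangle\to\langle\pi_t,|x|\rangle$ in probability, which yields \eqref{eq:stability-W1-conclusion}. The linear-growth test functions are then handled by the same truncation argument.
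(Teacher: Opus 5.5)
Your proposal is correct and follows the paper's three-step structure: convergence of $\log\mathsf Z^n_T$ plus Vitali, then Kallianpur--Striebel ratios, then a physical second-moment tail bound transferred to $\Q$ by keeping the density away from zero, followed by truncation (you split on $\{\mathsf Z^n_T<c\}$, the paper on $\{\mathsf D^n_t<a\}$). The only real variation is in the second step, where your uniform Lipschitz bound on $\sigma^n_t(\varphi)-\sigma_t(\varphi)$ gives $d_{\mathrm{BL}}$ convergence directly, provided regular conditional versions are fixed so that the supremum over $\varphi$ is taken simultaneously; the paper instead proves convergence for each $f\in C_b(\R^d)$ and passes to $d_{\mathrm{BL}}$ through a countable convergence-determining family and the subsequence criterion.
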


\begin{proof}
We divide the proof into three steps.

\medskip
\noindent\emph{Step 1: convergence of the likelihoods.}
Set
\[
 A_n:=\int_0^T|H_s^n-H_s|^2\dd s,
 \qquad
 N_t^n:=\int_0^t(H_s^n-H_s)^\top\dd Y_s.
\]
For $\delta>0$, let
\[
 \rho_{n,\delta}
 :=\inf\left\{t\in[0,T]:
       \int_0^t|H_s^n-H_s|^2\dd s>\delta\right\}\wedge T.
\]
On $\{A_n\le\delta\}$, $N_T^n=N_{\rho_{n,\delta}}^n$.  Hence, by It\^o's isometry and Chebyshev's inequality, for every $\varepsilon>0$,
\begin{eqnarray*}
 \Q(|N_T^n|>\varepsilon)
 &\le& \Q(A_n>\delta)
   +\Q(|N_{\rho_{n,\delta}}^n|>\varepsilon)\notag\\
 &\le& \Q(A_n>\delta)+\frac{\delta}{\varepsilon^2}.
 \label{eq:stability-stochastic-integral}
\end{eqnarray*}
First let $n\to\infty$ and then $\delta\downarrow0$.  It follows that $N_T^n\to0$ in probability.  Moreover,
\begin{eqnarray*}
 \left|\int_0^T(|H_s^n|^2-|H_s|^2)\dd s\right|
 &\le& A_n^{1/2}
 \left(\left(\int_0^T|H_s^n|^2\dd s\right)^{1/2}
       +\left(\int_0^T|H_s|^2\dd s\right)^{1/2}\right)\notag\\
 &\le& A_n^{1/2}
 \left(A_n^{1/2}
       +2\left(\int_0^T|H_s|^2\dd s\right)^{1/2}\right),
 \label{eq:stability-energy-difference}
\end{eqnarray*}
which also converges to zero in probability.  Therefore
\[
 \log\mathsf{Z}_T^n-\log\mathsf{Z}_T
 =N_T^n-\frac12\int_0^T(|H_s^n|^2-|H_s|^2)\dd s
 \longrightarrow0
\]
in probability, and hence $\mathsf{Z}_T^n\to\mathsf{Z}_T$ in probability.  Uniform integrability and Vitali's theorem yield
\begin{equation}
 \E^\Q|\mathsf{Z}_T^n-\mathsf{Z}_T|\longrightarrow0.
 \label{eq:stability-Z-L1}
\end{equation}
Since $\E^\Q\mathsf{Z}_T^n=1$, we obtain $\E^\Q\mathsf{Z}_T=1$.  The positive local martingale $\mathsf{Z}$ is therefore a uniformly integrable martingale.  In particular,
\[
 \mathsf{Z}_t^n=\E^\Q[\mathsf{Z}_T^n\mid\F_t],
 \qquad
 \mathsf{Z}_t=\E^\Q[\mathsf{Z}_T\mid\F_t],
\]
and
\begin{equation}
 \E^\Q|\mathsf{Z}_t^n-\mathsf{Z}_t|
 \le\E^\Q|\mathsf{Z}_T^n-\mathsf{Z}_T|
 \longrightarrow0.
 \label{eq:stability-Zt-L1}
\end{equation}
The total-variation assertion in \eqref{eq:stability-density-TV} follows from
\[
 \|\Pp^n-\Pp\|_{\mathrm{TV}}
 =\frac12\E^\Q|\mathsf{Z}_T^n-\mathsf{Z}_T|.
\]

\medskip
\noindent\emph{Step 2: bounded continuous tests and weak convergence.}
For $f\in C_b(\R^d)$, put
\begin{equation*}
 \mathsf{N}_t^n(f)
 :=\E^\Q[\mathsf{Z}_t^n f(X_t^n)\mid\FY_t],
 \qquad
 \mathsf{D}_t^n:=\mathsf{N}_t^n(1),
 \label{eq:stability-ND-n}
\end{equation*}
and define $\mathsf{N}_t(f)$ and $\mathsf{D}_t$ analogously.  The Kallianpur--Striebel formula gives
\begin{equation}
 \ip{\pi_t^n}{f}=\frac{\mathsf{N}_t^n(f)}{\mathsf{D}_t^n},
 \qquad
 \ip{\pi_t}{f}=\frac{\mathsf{N}_t(f)}{\mathsf{D}_t}.
 \label{eq:stability-KS-ratio}
\end{equation}
By \eqref{eq:stability-X-convergence}, $f(X_t^n)\to f(X_t)$ in probability.  Together with \eqref{eq:stability-Zt-L1},
\begin{align*}
 &\E^\Q|\mathsf{Z}_t^n f(X_t^n)-\mathsf{Z}_t f(X_t)|\\
 &\quad\le
 \|f\|_\infty\E^\Q|\mathsf{Z}_t^n-\mathsf{Z}_t|
 +\E^\Q\big[\mathsf{Z}_t|f(X_t^n)-f(X_t)|\big]
 \longrightarrow0.
\end{align*}
The second term tends to zero because its bounded factor converges in probability under the finite measure $\mathsf{Z}_t\dd\Q$.  Conditional expectation is an $L^1$ contraction, so
\begin{equation*}
 \mathsf{N}_t^n(f)\longrightarrow\mathsf{N}_t(f),
 \qquad
 \mathsf{D}_t^n\longrightarrow\mathsf{D}_t
 \quad\text{in }L^1(\Q).
 \label{eq:stability-ND-L1}
\end{equation*}
Since $\mathsf{Z}_t>0$ almost surely, $\mathsf{D}_t=\E^\Q[\mathsf{Z}_t\mid\FY_t]>0$ almost surely.  Taking ratios proves \eqref{eq:stability-Cb-conclusion} in probability.  The difference is bounded by $2\|f\|_\infty$, so convergence also holds in every finite $L^p(\Q)$.

Choose a countable convergence-determining family $\{f_j:j\ge1\}\subset C_b(\R^d)$.  From every subsequence one may extract a further subsequence along which $\ip{\pi_t^n}{f_j}\to\ip{\pi_t}{f_j}$ a.s. for every $j$.  On the resulting common full-probability event, the deterministic measures $\pi_t^n$ converge weakly to $\pi_t$, and hence their bounded--Lipschitz distance tends to zero.  The subsequence criterion for convergence in probability proves \eqref{eq:stability-BL-conclusion}.

\medskip
\noindent\emph{Step 3: first Wasserstein convergence.}
Assume \eqref{eq:stability-second-moment}.  For $R>0$, set
\[
 \psi_R(x):=|x|\one_{\{|x|>R\}}.
\]

Although \eqref{eq:stability-KS-ratio} was stated for bounded continuous
tests, its underlying Bayes identity holds for every bounded Borel test.
Apply that identity first to $\psi_{R,M}:=\psi_R\wedge M$ and then use
conditional monotone convergence as $M\uparrow\infty$.  The physical
second-moment bound makes the limiting numerator integrable, since
\[
 \E^\Q[\mathsf Z_t^n\psi_R(X_t^n)]
 =\E^{\Pp^n}[\psi_R(X_t^n)]
 \le R^{-1}\E^{\Pp^n}|X_t^n|^2<\infty,
\]
uniformly in $n$, with the analogous estimate under $\Pp$.

For $a,\varepsilon>0$, Markov's inequality and the martingale property of $\mathsf{Z}^n$ give
\begin{eqnarray*}
 \Q\left(\ip{\pi_t^n}{\psi_R}>\varepsilon\right)
 &\le& \Q(\mathsf{D}_t^n<a)
 +\Q\left(
   \E^\Q[\mathsf{Z}_t^n\psi_R(X_t^n)\mid\FY_t]
   >a\varepsilon\right)\notag\\
 &\le& \Q(\mathsf{D}_t^n<a)
 +\frac{1}{a\varepsilon}
   \E^\Q[\mathsf{Z}_T^n\psi_R(X_t^n)]\notag\\
 &\le& \Q(\mathsf{D}_t^n<a)
 +\frac{C}{a\varepsilon R},
 \label{eq:stability-filter-tail}
\end{eqnarray*}
where $C$ is independent of $n$, because $\psi_R(x)\le |x|^2/R$.  Since $\mathsf{D}_t^n\to\mathsf{D}_t>0$ in probability,
\begin{equation*}
 \lim_{a\downarrow0}\limsup_{n\to\infty}
 \Q(\mathsf{D}_t^n<a)=0.
 \label{eq:stability-denominator-away-zero}
\end{equation*}
Consequently,
\begin{equation}
 \lim_{R\to\infty}\limsup_{n\to\infty}
 \Q\left(\ip{\pi_t^n}{\psi_R}>\varepsilon\right)=0.
 \label{eq:stability-uniform-tail}
\end{equation}
The same argument gives $\ip{\pi_t}{\psi_R}\to0$ in probability as $R\to\infty$.

Let $T_R:\R^d\to\overline B_R$ be the radial projection,
\[
 T_R(x)=x\quad\text{if }|x|\le R,
 \qquad
 T_R(x)=R\frac{x}{|x|}\quad\text{if }|x|>R.
\]
Then
\begin{align}
 W_1(\pi_t^n,\pi_t)
 \le{}&W_1((T_R)_\#\pi_t^n,(T_R)_\#\pi_t)\notag\\
 &+\int|x-T_R(x)|\,\pi_t^n(\dd x)
 +\int|x-T_R(x)|\,\pi_t(\dd x).
 \label{eq:stability-W1-truncation}
\end{align}
For fixed $R$, \eqref{eq:stability-BL-conclusion} implies weak convergence in probability of the two pushforward measures.  Since they are supported on the compact ball $\overline B_R$, weak convergence there is equivalent to convergence in $W_1$.  Hence the first term in \eqref{eq:stability-W1-truncation} tends to zero in probability for fixed $R$.  The remaining terms are bounded by the corresponding $\psi_R$-moments, which vanish in probability by \eqref{eq:stability-uniform-tail} and its limiting analogue.  First choosing $R$ large and then $n$ large proves \eqref{eq:stability-W1-conclusion}.

For completeness, from every subsequence of the random measures one may extract an almost surely $W_1$-convergent subsubsequence.  The deterministic characterization of $W_1$ convergence then gives convergence against every continuous function satisfying \eqref{eq:stability-linear-growth-test}; the subsequence criterion returns convergence in probability for the original sequence.  Since $\Pp$ is equivalent to $\Q$, convergence in $\Q$-probability implies convergence in $\Pp$-probability.  If $A_n$ denotes either exceptional event, then
\[
 \Pp^n(A_n)
 \le \Pp(A_n)+\|\Pp^n-\Pp\|_{\mathrm{TV}}\longrightarrow0,
\]
which gives the final assertion.
\end{proof}

\begin{remark}[Relation with convergence of the inputs]
\label{rem:stability-inputs}
The intrinsic assumption is \eqref{eq:stability-H-convergence}.  Under the hypotheses used elsewhere in this paper, $h$ is only Borel measurable in its input variable, so $\E\int_0^T|u_n(t)-u(t)|\dd t\to0$ does not by itself imply \eqref{eq:stability-H-convergence}.  A simple sufficient condition is
\[
 |h(x,v)-h(y,w)|\le C(|x-y|+|v-w|)
\]
and
\[
 \E^\Q\sup_{t\le T}|X_t^n-X_t|^2
 +\E^\Q\int_0^T|u_n(t)-u(t)|^2\dd t\longrightarrow0.
\]
If the controls are uniformly bounded, then their $L^1$ convergence in expectation implies the required $L^2$ convergence.  At a common control-energy stopping level, the entropy estimate of \Cref{prop:filtering-likelihood} also verifies \eqref{eq:stability-Z-UI}, provided that its entropy bound is uniform in $n$.
\end{remark}

\section{Concluding remarks}
\label{sec:conclusion}

The filtering equation produces the dual BSPDE rather than merely supplying an external application.  The entropy localization used to construct each stopped reference probability also supplies the parameterized likelihood required in the BSDE representation of the dual equation; the argument does not create an unstopped reference probability without extra integrability.  The bounded first component and the polynomially weighted estimates for both $f$ and $g$ then make the measure-valued product formula rigorous and close the uniqueness argument.

The stability theorem further shows that, once a common global reference probability is available and the likelihoods are uniformly integrable, the normalized filter depends continuously on the state and on the induced observation drift: weak stability follows from $L^2$ convergence of the observation drifts, while a uniform physical second moment upgrades the conclusion to $W_1$ stability.

The present model keeps the signal and observation noises independent and the observation covariance equal to the identity.  Correlated noises generate first-order spatial terms in the Zakai noise operator and corresponding $Dg$ couplings in the dual BSPDE; that problem is analytically different rather than a notational vector extension.  The current result is designed as the filtering infrastructure for subsequent partially observed control, game, and stopping problems with observation-adapted random inputs and unbounded observation drift.

\section*{Declaration on the Use of AI Tools}

This work grew out of earlier research by some of the authors on partially observed stochastic control \cite{Sunw}. Our aim to study nonlinear stochastic control problems with unbounded observation coefficients led us to the filtering problem addressed here. The authors formulated the research problem, developed the mathematical approach and key arguments, and established the principal results. ChatGPT Pro 5.6 helped accelerate our work. Subsequently, GPT-6 Astra was used to conduct a separate review of the manuscript and suggest corrections. The authors independently verified every proof suggested by these tools, checked all incorporated revisions, and take full responsibility for the paper.

\appendix
\section{A backward SPDE with unbounded coefficients}
\label{sec:duality-problem}

The following BSPDE
\begin{equation}
 \begin{cases}
 \dd f_s(x)
 =-\left[L_sf_s(x)+h(x,u_s)^\top g_s(x)\right]\dd s
   +g_s(x)^\top\dd Y_s,&0\le s\le\theta,\\
 f_\theta(x)=\gamma(x),
 \end{cases}
 \label{eq:main-bspde}
\end{equation}
is studied in \cite{XXY}. We present its  definition and some main results here for the convenience of the reader.

We work on the reference space $(\Omega,\F,\mathbb F,\Q)$ introduced in the main body of the paper.  Thus $Y$ is a Brownian motion of dimension $m$, and $\theta\le \tau_k$ is a stopping time.

 Let $\mathscr G_\theta^{n+1}$ be the class of jointly measurable fields $\gamma:\Omega\times\R^d\to\R$ such that
\begin{enumerate}[label=\textnormal{(\roman*)}]
\item $\gamma(\cdot,x)$ is $\FY_\theta$-measurable for every $x$;
\item $x\mapsto\gamma(\omega,x)$ belongs to $C_b^{n+1}(\R^d)$ for almost every $\omega$;
\item the spatial derivatives admit jointly measurable versions and
\begin{equation*}
 \norm{\gamma}_{\mathscr G_\theta^{n+1}}
 :=\esssup_{\omega}\max_{|\alpha|\le n+1}
       \sup_{x\in\R^d}|D_x^\alpha\gamma(\omega,x)|<\infty.
 \label{eq:general-terminal-class}
\end{equation*}
\end{enumerate}
The smaller finite-rank class
\begin{equation*}
 \mathscr T_\theta^{n+1}
 :=\left\{
   \gamma(\omega,x)=\sum_{j=1}^N\xi_j(\omega)\phi_j(x):
   \begin{array}{l}
     N<\infty,\ \xi_j\in L^\infty(\Omega,\FY_\theta,\Q),\\
     \phi_j\in C_c^{n+1}(\R^d)
   \end{array}
 \right\}
 \subset\mathscr G_\theta^{n+1}
 \label{eq:terminal-class}
\end{equation*}
is the only terminal class needed for the  uniqueness proof in this article.  In particular, $\xi\phi$ with arbitrary bounded $\FY_\theta$-measurable $\xi$ is allowed.

Set
\begin{equation*}
 n_*:=\left\lfloor\frac d2\right\rfloor+3.
 \label{eq:n-star}
\end{equation*}
This is the smallest integer satisfying
\begin{equation*}
 n_*>\frac d2+2.
 \label{eq:n-star-embedding}
\end{equation*}
The strict inequality is exactly what is needed to obtain a $C_x^2$ version from an $H_x^{n_*}$ estimate.

\begin{assumption}[Spatial regularity used only for the dual BSPDE]
\label{ass:bspde-regularity}
In addition to \Cref{ass:filtering}, the positive-order spatial derivatives
\[
 D_x^\alpha b,\qquad D_x^\alpha\sigma,\qquad D_x^\alpha h
\]
exist and are bounded uniformly on $\R^d\times U$ for every multi-index
$\alpha$ with $1\le|\alpha|\le n_*+1$.
\end{assumption}

The extra derivative at order $n_*+1$ is used in the classical stochastic-flow construction of $n_*$ spatial derivatives.  

For $\ell>0$ and an integer $n\ge0$, define
\begin{equation*}
 \norm{v}_{H_\ell^n(\R^d;\R^q)}^2
 :=\sum_{a=1}^q\sum_{|\alpha|\le n}
   \int_{\R^d}|D^\alpha v^a(x)|^2\br{x}^{-2\ell}\dd x,
 \label{eq:weighted-Hn}
\end{equation*}
with the scalar convention when $q=1$.  We suppress the domain and target when they are clear.

\begin{definition}[Bounded weighted Sobolev solution]
\label{def:bspde-solution}
Let $\theta\le\tau_k$ be an $\mathbb F^Y$-stopping time and
$\gamma\in\mathscr G_\theta^{n_*+1}$.  A pair $(f,g)$ is a bounded weighted Sobolev solution of \eqref{eq:main-bspde}
if, for some $\ell>0$,
\[f\text{ is }\mathbb F^Y\text{-adapted},\quad g\text{ is }\mathbb F^Y\text{-predictable},\quad
   f\in L^\infty(\Omega\times[0,\theta]\times\R^d),
 \label{eq:solution-meas}\]
 \begin{equation}\E^\Q\esssup_{s\le\theta}\norm{f_s}_{H_\ell^{n_*}}^2
   +\E^\Q\int_0^\theta\norm{g_s}_{H_\ell^{n_*}(\R^d;\R^m)}^2\dd s
   <\infty,
 \label{eq:solution-norm}
\end{equation}
and, for every $\varphi\in C_c^\infty(\R^d)$ and $t\in[0,T]$,
\begin{align}
 \ip{f_{t\wedge\theta}}{\varphi}
 ={}&\ip{\gamma}{\varphi}
 +\int_{t\wedge\theta}^{\theta}\ip{f_s}{L_s^*\varphi}\dd s
 \notag\\
 &+\int_{t\wedge\theta}^{\theta}
      \ip{h(\cdot,u_s)^\top g_s}{\varphi}\dd s
 -\int_{t\wedge\theta}^{\theta}\ip{g_s}{\varphi}^\top\dd Y_s.
 \label{eq:weak-bspde}
\end{align}
Here $L_s^*$ is the distributional adjoint of $L_s$.
\end{definition}

\begin{theorem}
\label{thm:main-bspde}
Suppose \Cref{ass:bspde-regularity} hold.  Fix $k\in\N$, an $\mathbb F^Y$-stopping time $\theta\le\tau_k$, and
$\gamma\in\mathscr G_\theta^{n_*+1}$.  Set
\begin{equation}
 \ell_0:=\frac d2+2.
 \label{eq:ell-zero}
\end{equation}

Then \eqref{eq:main-bspde} admits a unique bounded weighted Sobolev
solution $(f,g)$ with $\ell=\ell_0$ in \eqref{eq:solution-norm}.  The
component $f$ is $\mathbb F^Y$-adapted/predictable and has a single jointly
measurable, locally space--time continuous representative for which,
on one event of full $\Q$-probability,
\begin{equation}
 |f_s(x)|\le\norm{\gamma}_{L^\infty(\Omega\times\R^d)},
 \qquad 0\le s\le\theta,\quad x\in\R^d.
 \label{eq:f-bounded}
\end{equation}

For every
\begin{equation*}
 \lambda>d+2
 \label{eq:lambda-threshold}
\end{equation*}
and every multi-index $\alpha$ with $|\alpha|\le2$,
\begin{align}
 \E^\Q\Bigg[
 &\esssup_{0\le s\le\theta}\sup_{x\in\R^d}
     \frac{|D^\alpha f_s(x)|^2}{\br{x}^{2\lambda}}
 +\int_0^\theta\sup_{x\in\R^d}
     \frac{|D^\alpha g_s(x)|^2}{\br{x}^{2\lambda}}\dd s
 \Bigg]<\infty.
 \label{eq:weighted-sup-2}
\end{align}
In addition, $f$ admits a version that is continuous in time with values in $C(B_R)$ for every $R<\infty$.  Consequently, in \eqref{eq:weighted-sup-2} with $\alpha=0$ the essential time supremum may be replaced by the ordinary supremum.  The numerical exponents $\ell_0$ and the threshold $d+2$ are independent of $k$; only the constants in the estimates depend on the stopping level.
\end{theorem}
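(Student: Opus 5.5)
Existence will come from a Feynman--Kac representation. Uniqueness will come from a weighted energy identity. I will work throughout under the stopped reference probability $\Q=\Q^k$, on which $Y$ is an $m$-dimensional Brownian motion independent of $W$.

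\emph{Representation of $f$.} For $s\le\theta$ and $x\in\R^d$, let $X^{s,x}$ denote the flow of $\dd X_r=b(X_r,u_r)\dd r+\sigma(X_r,u_r)\dd W_r$ started at $x$ at time $s$. Because $u$ is $\mathbb F^Y$-predictable and $W$ is independent of $Y$, this flow is well defined. Define the parameterized likelihood
\[
M^{s,x}_r=\cE\Bigl(\int_s^{\cdot}h(X^{s,x}_v,u_v)^\top\dd Y_v\Bigr)_r .
\]
The candidate solution is
\[
f_s(x)=\E^\Q\bigl[\gamma(X^{s,x}_\theta)\,\widetilde M^{s,x}_\theta\mid\FY_s\bigr].
\]
Here $\widetilde M^{s,x}$ is a normalized likelihood under an auxiliary measure, chosen so that $f$ is a conditional expectation of $\gamma(X_\theta)$ under a probability measure. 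The bound \eqref{eq:f-bounded} then follows at once from $|\gamma|\le\|\gamma\|_\infty$. To make this rigorous I will change measure with the stopped exponential. Its martingale property comes from repeating the entropy argument of \Cref{prop:filtering-likelihood} with a starting point $x$. The linear growth of $h$ gives entropy bounds of order $\br{x}^2$, uniformly on $\theta\le\tau_k$. The same argument will be used in Step 1 of the pathwise identification below.

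\emph{The pair $(f,g)$ and the equation.} The martingale representation in the stopped Brownian filtration (\Cref{lem:reference-stopped-prp}) applied to the closed martingale $r\mapsto\E^\Q[\cdots\mid\FY_r]$ produces $g_s(x)$. A Kunita-type It\^o--Wentzell argument on the flow then identifies \eqref{eq:main-bspde} in the weak form \eqref{eq:weak-bspde}.

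\emph{Spatial regularity.} Spatial derivatives up to order $n_*$ will be obtained by differentiating the flow; this is where the bounded derivatives up to order $n_*+1$ in \Cref{ass:bspde-regularity} enter. Derivatives of $X^{s,x}$ have moments bounded uniformly in $x$. Derivatives of $\log M^{s,x}$ involve $Dh$ (bounded) times $DX$, so they are uniformly controlled as well. The only place growth in $x$ enters is through moments of $M^{s,x}$ and of $h(X^{s,x},u)$, which grow polynomially in $\br{x}$. Consequently $|D^\alpha f_s(x)|^2\lesssim\br{x}^{2p}$ for a fixed power $p$ depending on $|\alpha|$.

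\emph{Weighted Sobolev and sup bounds.} Integrating against $\br{x}^{-2\ell_0}$ with $\ell_0=d/2+2$ makes the weighted $H^{n_*}$ norm finite; the power $p$ must be absorbed by the weight, which is why $\ell_0$ exceeds $d/2$ by the growth order. For $g$, I will use Itô's isometry on the representation together with the energy identity obtained from Itô's formula for $\|f_s\|^2_{H^{n_*}_{\ell_0}}$. The pointwise weighted bound \eqref{eq:weighted-sup-2} follows from the weighted Sobolev embedding $H^{n_*}_{\ell}\hookrightarrow C^2$ with growth $\br{x}^{\ell}$, using $n_*>d/2+2$. Any $\lambda>d+2$ suffices because of the loss from converting $L^2$-weights into sup-weights.

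\emph{Time continuity and uniqueness.} Time continuity in $C(B_R)$ follows from continuity of $s\mapsto f_s$ in local $H^{n_*-1}$, obtained from the BSPDE by Kolmogorov-type arguments. Uniqueness will be proved by applying the Itô formula to $\|f^1_s-f^2_s\|^2_{H^0_\ell}$ for two solutions. The integration by parts of $L_s$ against the weight produces terms bounded by $C(1+|u_s|^2)$, because $Da$ and $Db$ are bounded. The cross term $h^\top g$ is absorbed via $|h|\le K(1+|x|+|u_s|)$ together with a larger weight exponent. This absorption is delicate because of the $|x|$ growth. I plan to handle it with the stochastic exponential weight, that is, the change of measure that turns $h^\top g\,\dd s+g^\top\dd Y$ into $g^\top\dd\widetilde Y$. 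After that, Gronwall's lemma with the clock $\int(1+|u|^2)\le T+k$ finishes the argument.

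\emph{Main obstacle.} I expect the hardest step to be the uniform-in-$x$ integrability needed to differentiate under the conditional expectation, that is, higher moments of $M^{s,x}$. Novikov fails because $h$ is unbounded. I would first try a localized entropy/BMO estimate on short time intervals of small control energy, patched together across those intervals. If that fails, I would instead estimate derivatives in a weighted $L^2$ norm directly from the BSPDE by a priori energy estimates, and avoid the flow entirely.
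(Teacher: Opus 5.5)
The paper does not prove this theorem; it quotes it from the companion work. Its remarks do indicate the route: a BSDE/Feynman--Kac representation with the parameterized likelihood $M^{s,x}$, made a true martingale by the entropy localization, combined with a stochastic-flow construction of $n_*$ spatial derivatives. Your architecture is the same, and \eqref{eq:f-bounded} needs no normalization, since $\E^\Q[M^{s,x}_\theta\mid\F_s]=1$. However, the two quantitative steps that carry the theorem fail as you describe them.

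\emph{Derivative bounds.} You control $D^\alpha f$ through moments of $M^{s,x}$ and assert that these grow polynomially in $\br{x}$. Under \Cref{ass:bspde-regularity} they need not even be finite. Take $d=m=r=1$, $b=0$, $\sigma(x)=x$, $h(x)=x$ and $\theta=T>s$. Conditionally on $W$, $\E^\Q[(M^{s,x}_\theta)^p\mid W]=\exp\bigl(\tfrac{p(p-1)}2\int_s^\theta|X^{s,x}_r|^2\dd r\bigr)$, where $X^{s,x}$ is a geometric Brownian motion. Its expectation is $+\infty$ for every $p>1$ and $x\neq0$; even for bounded $\sigma$ it is of order $e^{c|x|^2}$. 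In this example $\int h(X^{s,x})^\top\dd Y$ is not BMO on any interval, so the short-interval reverse-H\"older patching cannot help. There is also a bookkeeping problem. Because $\ell_0=d/2+2$ is fixed, the growth exponent of $D^\alpha f$ must be $<2$ for every $|\alpha|\le n_*$. If the two $\br{x}$-size drifts in $D^\beta\log M^{s,x}$ (one hidden in $\dd Y$, the other $-\int h\cdot D^\beta[h\circ X]\dd r$) are estimated separately, you only get $|D^\alpha f|\lesssim\br{x}^{|\alpha|}$, and $\br{x}^{2n_*}$ is not integrable against $\br{x}^{-2\ell_0}$.

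The missing idea is never to take $L^p(\Q)$-norms of $M$ with $p>1$. Set $\dd\widetilde{\Pp}^{s,x}=M^{s,x}_\theta\dd\Q$; this is a probability precisely by the entropy argument. Then $\E^\Q[M^{s,x}_\theta F\mid\F_s]=\E^{\widetilde{\Pp}^{s,x}}[F\mid\F_s]$, the process $\widetilde Y:=Y-\int h(X^{s,x}_r,u_r)\dd r$ is a $\widetilde{\Pp}^{s,x}$-Brownian motion, and $W$ and the flow are unchanged. Substituting $\dd Y=h\dd r+\dd\widetilde Y$, the terms $h\cdot D^\beta[h\circ X]$ cancel exactly for $|\beta|\ge1$:
\[
 D^\beta\log M^{s,x}_\theta
 =\int_s^\theta D^\beta\bigl[h(X^{s,x}_r,u_r)\bigr]^\top\dd\widetilde Y_r
 -\frac12\sum_{\substack{\beta_1+\beta_2=\beta\\ \beta_1,\beta_2\neq0}}\binom{\beta}{\beta_1}
 \int_s^\theta D^{\beta_1}\bigl[h(X^{s,x}_r,u_r)\bigr]\cdot D^{\beta_2}\bigl[h(X^{s,x}_r,u_r)\bigr]\dd r .
\]
Only positive-order derivatives of the coefficients appear, so $D^\alpha f$ ($|\alpha|\le n_*$) is bounded independently of $x$. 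Interchanging $D_x$ with the conditional expectation then needs only first moments under $\widetilde{\Pp}^{s,z}$, integrated in $z$ (weak derivatives via Fubini). The $\br{x}^2$ growth enters only the $g$-estimate, through $2|fh^\top g|\le\frac12|g|^2+2|h|^2|f|^2$ and $a\sim\br{x}^2$. This is consistent with the choice $\ell_0=d/2+2$.

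\emph{Uniqueness.} A weighted energy estimate for $\delta f=f^1-f^2$ does not close, and the same obstruction defeats your fallback of a priori energy estimates for derivatives.
\begin{itemize}
\item The change of measure removing $h^\top\delta g\dd s$ depends on $x$ through $h(x,u_s)$, so it cannot be performed inside $\|\delta f_s\|^2_{H^0_\ell}$.
\item Without it, Young's inequality leaves $\int|h|^2|\delta f|^2\br{x}^{-2\ell}$, which is heavier by $\br{x}^2$, so Gronwall fails.
\item $Da$ is not bounded: $a=\sigma\sigma^\top$ with $D\sigma$ bounded has $Da\sim\br{x}$. The zero-order term $\frac12D_{ij}(a^{ij}w)$ produced by integration by parts contains $\sigma D^2\sigma\sim\br{x}$, which cannot be absorbed without ellipticity.
\end{itemize}
Your change-of-measure instinct works along a characteristic instead. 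Since $n_*>d/2+2$, $\delta f$ and $\delta g$ are $C^2$ in $x$. With $H^{s,x}_r:=h(X^{s,x}_r,u_r)$, It\^o--Wentzell and $[W,Y]=0$ give
\[
 \dd\bigl[\delta f_r(X^{s,x}_r)M^{s,x}_r\bigr]
 =M^{s,x}_r\Bigl[\bigl(\delta g_r(X^{s,x}_r)+\delta f_r(X^{s,x}_r)H^{s,x}_r\bigr)^\top\dd Y_r
 +D\delta f_r(X^{s,x}_r)\,\sigma(X^{s,x}_r,u_r)\dd W_r\Bigr],
\]
because the drift $-h^\top\delta g$ cancels against the bracket with $\dd M=Mh^\top\dd Y$. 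This local martingale is dominated by $\|\delta f\|_\infty M^{s,x}$, which is of class D. Hence it is a true martingale with zero terminal value, so $\delta f\equiv0$ and then $\delta g\equiv0$. This is exactly where the boundedness built into the solution class is used.
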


The following proposition supplement the main theorem above for the solution of the BSPDE, which will be useful in the proof of
\Cref{thm:measure-product}.

\begin{proposition}[Local time continuity of the first component]
\label{prop:local-time-continuity}
Every weighted Sobolev solution has a single jointly measurable,
adapted modification, fixed simultaneously on all integer balls,
such that, for every $R<\infty$,
\begin{equation}
 f\in C\bigl([0,\theta];H^{n_*-2}(B_R)\bigr)
 \hookrightarrow C\bigl([0,\theta];C(\overline B_R)\bigr),
 \label{eq:local-time-continuity}
\end{equation}
where the process is extended constantly after $\theta$ when
convenient.  This modification remains jointly measurable and
$\mathbb G^k$-adapted.  For the constructed solution, fix this
representative once and for all and use it henceforth.
\end{proposition}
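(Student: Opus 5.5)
The plan is to obtain the continuous-in-time version directly from the weak BSPDE \eqref{eq:weak-bspde}, read as an identity in a local negative Sobolev space, and then interpolate against the weighted bounds of \Cref{thm:main-bspde}. Fix an integer ball $B_R$ and a cutoff $\chi\in C_c^\infty(B_{R+1})$ equal to one on $B_R$.

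\emph{Continuity in a weak space.} The localized field $\chi f$ satisfies, in $H^{-2}(B_{R+1})$,
\[
\chi f_{t\wedge\theta}=\chi\gamma+\int_{t\wedge\theta}^\theta\chi\bigl(L_sf_s+h_s^\top g_s\bigr)\dd s-\int_{t\wedge\theta}^\theta\chi g_s^\top\dd Y_s .
\]
On $B_{R+1}$ the weight $\br{x}^{-2\ell_0}$ is bounded above and below, so \eqref{eq:solution-norm} gives $f\in L^2(\Omega;L^\infty_tH^{n_*}(B_{R+1}))$ and $g\in L^2(\Omega\times[0,\theta];H^{n_*}(B_{R+1}))$. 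On the ball the coefficients are bounded by $C_R(1+|u_s|^2)$, and $\int_0^\theta|u_s|^2\dd s\le k$. Hence the Bochner integrand lies in $L^1_tH^{n_*-2}$ a.s., and the stochastic integral is a continuous $H^{n_*}(B_{R+1})$-valued martingale by the Hilbert-valued It\^o isometry. The right-hand side therefore defines a process $\tilde f^R\in C([0,\theta];H^{n_*-2}(B_{R+1}))$ that agrees with $\chi f$, as distributions, for $\dd t\otimes\dd\Q$-a.e.\ $(t,\omega)$. It agrees for every $t$ once we use the weak identity for all $\varphi$ in a countable dense set, taking a countable intersection of null sets.

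\emph{Consistency and measurability.} Doing this for every integer $R$, the restrictions of $\tilde f^{R+1}$ and $\tilde f^R$ to $B_R$ coincide for all $t$ off a single null set, because both are continuous and agree with $f$ at each rational time. Patching then gives one modification defined on all of $\R^d$. Sobolev embedding $H^{n_*-2}(B_R)\hookrightarrow C(\overline B_R)$, valid since $n_*-2>d/2$, gives a continuous spatial representative. Joint measurability follows because $(t,x,\omega)\mapsto\tilde f(t,x,\omega)$ is continuous in $(t,x)$ and adapted in $\omega$ for each fixed $(t,x)$. Adaptedness to $\mathbb G^k$ follows from \Cref{lem:reference-stopped-prp}, which identifies $\FY_{t\wedge\tau_k}$ with $\mathcal G^k_t$ on $[0,\theta]$. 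This modification also inherits \eqref{eq:f-bounded}, since a.e.\ bounds pass to continuous versions.

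\emph{Main obstacle.} The delicate point is justifying the pathwise identity at every $t$ and not just a.e.\ in $t$. The estimate $\esssup_s$ in \eqref{eq:solution-norm} only controls $f$ off a time-null set, while the weak formulation is stated for every $t$ with possibly $\omega$-dependent null sets that depend on $\varphi$. I would fix a countable dense subset of $C_c^\infty(B_{R+1})$ and use continuity of both sides in $t$ (the right side by construction, the left after choosing the version). The identification in $H^{-2}$ is then unique, and continuity into the stronger space $H^{n_*-2}$ follows because the right-hand side already lives there. This last step uses the regularity $g\in H^{n_*}$ on the ball, and it is the place where \Cref{ass:bspde-regularity} enters through \Cref{thm:main-bspde}.
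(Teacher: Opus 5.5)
The paper gives no proof of this proposition; the appendix quotes it from \cite{XXY}. Your argument is correct and is the natural one. You read the weak BSPDE on a ball as an $H^{n_*-2}$-valued identity: the Bochner term $\chi L_sf_s$ loses exactly two derivatives, and the It\^o term is a continuous $H^{n_*}$-valued martingale. You take the right-hand side as the version and patch across integer balls through rational times. This also explains why the target space is $H^{n_*-2}$. It is the same Hilbert-space reading of the weak equation that the paper uses for the smoothed BSPDE in the proof of \Cref{thm:measure-product}.

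Three points should be tightened.

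First, sup bounds $C_R(1+|u_s|^2)$ on the coefficients do not place $\chi L_sf_s$ and $\chi h_s^\top g_s$ in $H^{n_*-2}(B_{R+1})$. You need multiplier bounds: the $C^{n_*-2}(\overline B_{R+1})$ norms of $a(\cdot,u_s)$, $b(\cdot,u_s)$ and $h(\cdot,u_s)$. \Cref{ass:bspde-regularity} bounds the first by $C_R(1+|u_s|^2)$, because every derivative of $\sigma\sigma^\top$ contains at most one undifferentiated factor of linear growth. It bounds the other two by $C_R(1+|u_s|)$. This is where \Cref{ass:bspde-regularity} actually enters. The local $H^{n_*}$ bounds on $f$ and $g$ come straight from \eqref{eq:solution-norm} in the definition of a solution, with that solution's own $\ell$, and not from \Cref{thm:main-bspde}. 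Under \Cref{ass:filtering} alone, $h$ is only Borel in $x$, and your argument would give continuity only in a much weaker space.

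Second, the remark about using continuity of the left side ``after choosing the version'' is circular, and you do not need it. The version is the right-hand side, which is continuous by construction. At each fixed $t$, the weak identity for countably many test functions gives $\tilde f^R_t=\chi f_{t\wedge\theta}$ a.s. That fixed-$t$ equality is exactly what a modification means.

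Third, that same fixed-$t$ equality, together with adaptedness of $f$ and completeness of the filtration, is what makes $\tilde f^R$ adapted. The right-hand side is built from $\FY_\theta$-measurable pieces, so it is not adapted on its face. Your claim that the version is ``adapted in $\omega$ for each fixed $(t,x)$'' needs this one-line justification.
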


\end{document}